\documentclass[11pt]{amsart}

\usepackage{graphicx}
\usepackage{bm}
\usepackage{pifont}
\usepackage{adjustbox}
\usepackage{tikz}
\usepackage{color}
\usepackage{hyperref}
\usepackage{amsmath, amssymb}

\newcommand{\la}{\langle}
\newcommand{\ra}{\rangle}
\newcommand{\x}{\times}

\newcommand{\too}{\longrightarrow}
\newcommand{\bd}{\partial}

\newcommand{\cC}{{\mathcal{C}}}
\newcommand{\cD}{{\mathcal{D}}}

\newcommand{\cL}{{\mathcal{L}}}
\newcommand{\cO}{\mathcal O}
\newcommand{\cA}{\mathcal{A}}

\newcommand{\cU}{\mathcal{U}}

\newcommand{\cP}{\mathcal{P}}
\newcommand{\NN}{\mathbb{N}}
\newcommand{\ZZ}{\mathbb{Z}}
\newcommand{\CC}{\mathbb{C}}

\newcommand{\CP}{\mathbb{CP}}

\newcommand{\QQ}{\mathbb{Q}}

\DeclareMathOperator{\lcm}{lcm}

\renewcommand{\a}{\alpha}
\renewcommand{\b}{\beta}
\renewcommand{\d}{\delta}
\newcommand{\g}{\gamma}
\newcommand{\e}{\varepsilon}
\newcommand{\eps}{\epsilon}
\newcommand{\f}{\varphi}

\renewcommand{\l}{\lambda}
\renewcommand{\k}{\kappa}
\newcommand{\s}{\sigma}
\newcommand{\m}{\mu}
\newcommand{\n}{\nu}
\renewcommand{\o}{\omega}

\renewcommand{\t}{\tau}

\newcommand{\G}{\Gamma}

\renewcommand{\S}{\Sigma}

\newcommand{\orb}{\mathrm{orb}}

\newcommand{\ii}{\mathrm{i}}

\newtheorem{theorem}{Theorem}[section]
\newtheorem{corollary}[theorem]{Corollary}
\newtheorem{proposition}[theorem]{Proposition}

\newtheorem{lemma}[theorem]{Lemma}
\newtheorem*{theorem*}{Theorem}

\theoremstyle{definition}
\newtheorem{definition}[theorem]{Definition}

\newtheorem{remark}[theorem]{Remark}

\begin{document}

\title[On K-contact non-Sasakian Smale-Barden manifolds]{On the construction of K-contact 
non-Sasakian \\ Smale-Barden manifolds}

\author{Vicente Mu\~noz}
\address{Departamento de Algebra, Geometr\'{\i}a y Topolog\'{\i}a, Universidad Complutense de Madrid, Plaza de Ciencias, Ciudad Universitaria, 28040 Madrid, Spain.} 
\email{vicente.munoz@ucm.es}

\author{Juan Rojo}
\address{ETS Ingenieros Inform\'aticos, Universidad Polit\'ecnica de Madrid, Campus de Montegancedo, 28660 Madrid, Spain.}
\email{juan.rojo.carulli@upm.es}
%
%
\maketitle

\begin{abstract}
    In the breakthrough paper \cite{Mu-jems}, it 
is constructed the first example of a simply connected compact 
$5$-manifold (aka.\ Smale-Barden manifold) which admits a K-contact structure but does not carry a Sasakian structure.
In this work we clarify some aspects of the construction of \cite{Mu-jems}, determining explicitly the number $N$ of symplectic surfaces needed to 
have an isotropy locus that produce a $5$-manifold that is K-contact but not Sasakian.
Also, in order to analyse the geography problem of determining which 
Smale-Barden manifolds admit K-contact but not Sasakian structures, we 
refine and generalize the constructions of symplectic surfaces in a symplectic $4$-manifold with transversal intersections giving rise to such manifolds.
\end{abstract}

\section{Introduction}
\label{sec:1}

A central question in geometry is to determine when a given manifold admits a specific geometric structure. 
For instance, it is already a classic problem the question of constructing (compact) symplectic manifolds that do not admit K\"ahler structures.
In odd dimensions, the analogues of K\"ahler and symplectic manifolds are Sasakian and K-contact manifolds, so a natural question asks for the existence of K-contact manifolds which do not admit Sasakian structures.
In higher dimensions, the problem is solved by means of topological obstructions, like the parity of $b_1$, the rational homotopy type or the hard Lefschetz property. 
The problem of the existence of simply connected K-contact non-Sasakian compact manifolds in dimension $5$ is the hardest and a cornerstone in this area. It appears as Open Problem 10.2.1 in the treatise \cite{BG}:

\smallskip

{\it Are there Smale-Barden manifolds with K-contact but not Sasakian structures?}

\medskip

A simply connected compact $5$-manifold is called a {\it Smale-Barden manifold}. These manifolds are classified \cite{B,S} by their second homology group 
   \begin{equation*} 
  H_2(M,\ZZ)=\ZZ^k\oplus ( \mathop{\oplus}\limits_{p,i}\ZZ_{p^i}^{c(p^i)}),
  \end{equation*}
where $k=b_2(M)$ and the numbers $p$ are distinct primes, together with the second 
Stiefel-Whitney map $w_2: H_2(M,\ZZ)\rightarrow\ZZ_2$. The fact that we have a classification of all 
Smale-Barden manifolds raises the {\em geography problem} of determining which Smale-Barden manifolds admit K-contact and which admit Sasakian structures.

A Sasakian manifold $M$ always admits a \emph{quasi-regular} Sasakian structure, that is it is a 
Seifert bundle over a cyclic K\"ahler orbifold $X$.
In the case of a $5$-manifold, $X$ is a singular
complex surface with cyclic quotient singularities, and the isotropy locus is formed by complex curves intersecting transversely. 
Analogously, a K-contact manifold admits a quasi-regular K-contact structure, which is a Seifert bundle over an
almost-K\"ahler orbifold
(that is, a symplectic orbifold with a compatible almost complex structure, which always exists) with cyclic singularities, and the isotropy locus is formed by symplectic surfaces, intersecting positively (and only pairwise).

The topology of a simply connected $5$-manifold $M$ which is a Seifert bundle over a cyclic $4$-orbifold $X$ is determined in \cite[Therorem 36]{Mu}. The homology is 
 $$
 H_2(M,\ZZ)=\ZZ^k\oplus (\mathop{\oplus}_i \ZZ_{m_i}^{2g_i}),
 $$
where $k=b_2(M)=b_2(X)-1$, and the isotropy locus has 
isotropy surfaces $D_i$ with multiplicities $m_i$, and genus $g_i=g(D_i)$. We have the conditions $H_1(X,\ZZ)=0$,  
$\gcd(m_i,m_j)=1$ if $D_i,D_j$ intersect, 
and $H^2(X,\ZZ)\to \mathop{\oplus}_i H^2(D_i,\ZZ_{m_i})$ is surjective.
There are some technical conditions on the Chern class of the
Seifert bundle $M\to X$, and on the orbifold fundamental group 
$\pi_1^{\orb}(X)$ to arrange that $M$ is simply-connected, and to characterize the Stiefel-Whiteny class $w_2(M)$.

 To produce K-contact Smale-Barden manifolds, one needs to construct symplectic  $4$-orbifolds with cyclic quotient
 singularities having symplectic surfaces 
 of given genus inside. If the isotropy coefficients are not coprime, these surfaces are forced to be
 disjoint (and linearly independent in homology). In particular,
 the number of such surfaces in the isotropy locus is at most
 $b_2(X)=k+1$. Some properties of the configuration of these isotropy surfaces can be read from  $H_2(M,\ZZ)$, for instance: the genus, the isotropy 
 coefficients, and whether they are disjoint. This fact opens a way to attack the problem of finding a K-contact manifold without Sasakian structure, as configurations of symplectic surfaces are less rigid than configurations of complex curves. This was exploited in \cite{MRT} and \cite{CMRV} to produce K-contact $5$-manifolds with $H_1(M,\ZZ)=0$ and simply-connected, and not admitting
 a \textit{semi-regular} Sasakian structure.
   
The question \cite[Open Problem 10.2.1]{BG} is solved in the breakthrough paper \cite{Mu-jems}:

\begin{theorem}[{\cite{Mu-jems}}] \label{thm:main-jems}
There exists a Smale-Barden manifold $M$ which admits a K-contact structure but does not admit a Sasakian structure.
In concrete terms, there is some $N>0$ large enough, and distinct primes $p_{nm}>\max(3,n,m)$, $1\leq n,m\leq N$, so that
 \begin{equation*}
  H_2(M,\ZZ)= \ZZ^2 \oplus \bigoplus_{n,m=1}^N 
 \left(\ZZ^{18n^2+2}_{p_{nm}}  \oplus \ZZ^{18m^2+2}_{p_{nm}^2} \oplus \ZZ^{20}_{p_{nm}^3}\right) .
 \end{equation*}
\end{theorem}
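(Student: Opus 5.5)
The proof has two halves: a symplectic construction giving the K-contact structure, and an algebro-geometric argument ruling out any Sasakian structure. The homology formula of \cite[Theorem 36]{Mu} links them. For each pair $(n,m)$, the three torsion groups $\ZZ_{p_{nm}}^{18n^2+2}$, $\ZZ_{p_{nm}^2}^{18m^2+2}$, $\ZZ_{p_{nm}^3}^{20}$ must come from three isotropy surfaces. Their multiplicities are $p_{nm}$, $p_{nm}^2$, $p_{nm}^3$ and their genera are $g=9n^2+1$, $9m^2+1$, $10$. Since the multiplicities within each triple share the prime $p_{nm}$, the three surfaces of a triple must be pairwise disjoint. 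The free part $\ZZ^2$ forces $b_2(X)=3$.

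\textbf{Step 1 (K-contact side).} I would construct a simply connected symplectic $4$-manifold $X$ with $b_2=3$, for instance a suitable blow-up of $\CP^2$ or a manifold homeomorphic to $\CP^2\#2\overline{\CP}^2$. Inside it I would find, for every $(n,m)$, three disjoint symplectic surfaces of genera $9n^2+1$, $9m^2+1$, $10$ spanning $H_2$. The genus $9n^2+1$ is suggestive: it is the genus of a smoothing of a curve of bidegree/degree type $3n$ sitting in a class like $3nH$ plus a correction. I would obtain each triple by symplectically resolving (smoothing) intersections of complex curves, e.g.\ smoothing $3n$ parallel copies of a cubic-type class, which lowers the genus count appropriately. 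Surfaces from different triples may intersect, since different primes are coprime, but they must do so positively and transversely, and no three surfaces may meet at a point; this can be arranged by small symplectic perturbation.

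The isotropy locus then consists of all $3N^2$ surfaces. Choosing the primes $p_{nm}>\max(3,n,m)$ distinct ensures the gcd conditions. It also makes $H^2(X,\ZZ)\to\oplus H^2(D_i,\ZZ_{m_i})$ surjective, because each triple is a basis up to units mod $p_{nm}$. I would then pick a Seifert Chern class satisfying the technical conditions of \cite{Mu}, including $\pi_1^{\orb}$ and $w_2$. This yields a K-contact $M$ with the stated $H_2$ via \cite[Theorem 36]{Mu}.

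\textbf{Step 2 (non-Sasakian side).} Suppose $M$ were Sasakian. Then it is quasi-regular over a Kähler orbifold $X'$ with $b_2(X')=3$. Its isotropy locus contains, for each $(n,m)$, three disjoint complex curves with the same genera, and these must be linearly independent in $H_2$. So $X'$ is a normal complex surface with $b_2=3$ carrying $N^2$ triples of disjoint curves whose genera grow like $9n^2,9m^2$. Using $H_1=0$ and the positivity of the intersection form restricted appropriately, I would bound the geometry of $X'$. I would show it must be rational, and a minimal resolution with bounded Picard rank perturbation gives curves of bounded self-intersection behaviour. Then adjunction bounds the genus of a curve in terms of its class. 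For three disjoint curves spanning $H_2$ in a surface of signature $(1,2)$, at most one can have positive square, while the others have negative square. Adjunction, $2g-2=D^2+K\cdot D$, then forces a relation among genera that fails for large $n$ or $m$, or fails the count of how many classes of given genus can be realised. Taking $N$ large, a pigeonhole/finiteness argument over the possible configurations produces the contradiction.

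The main obstacle is Step 2. One has to control singular Kähler orbifolds with cyclic singularities, which contribute extra exceptional classes on the resolution. The genus of curves with negative square has to be bounded uniformly so that only finitely many genera can occur, which contradicts having $N^2$ distinct genus pairs. I would first try to handle this by resolving $X'$, using Bogomolov--Miyaoka--Yau or Hodge index to bound the number of singular points, and then showing the negative curves have bounded genus.
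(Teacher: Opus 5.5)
There is a genuine gap, and it is already in Step~1. You put the K-contact side on a symplectic manifold of signature $(1,2)$ (a two-point blow-up of $\CP^2$, or something homeomorphic to it). That is exactly the situation the Hodge index theorem forces on the Kähler side.

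In such an $X$, three disjoint surfaces spanning $H_2$ are one positive and two negative. So in every triple, at least one of the genera $9n^2+1$, $9m^2+1$ must sit on a surface of negative square. This breaks your construction in several ways:
\begin{itemize}
\item Smoothing parallel copies (Proposition~\ref{prop:Cn}) requires positive square.
\item Genus $9n^2+1$ in the classes $n[C]$ forces $C^2=18$ and $K\cdot C=0$. This is impossible here, since $K^2=2\chi+3\sigma=7>0$ and $b_2^+=1$ make $K^\perp$ negative definite.
\item In the honest two-point blow-up, positivity of intersections with the exceptional spheres $E_1,E_2,H-E_1-E_2$, plus adjunction, shows that every symplectic surface of negative square is a sphere. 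So not a single triple exists.
\end{itemize}

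Worse, the plan is self-defeating. The ingredients that drive your Step~2 (signature $(1,2)$, adjunction, counting genera) hold equally for your own symplectic configuration, so they cannot separate the two sides.

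The missing idea is to use a symplectic \emph{orbifold} with $b_2=b_2^+=3$, in which all three surfaces of each triple have positive square. In the paper, $T_1^2=T_1'^2=A^2=18$ and $K_{X'}=0$. This is what produces the multiples $T_n$, $T'_m$ of genus exactly $9n^2+1$ and $9m^2+1$. It is also precisely what Hodge index rules out for a Kähler orbifold with $b^{1,1}=b_2=3$.

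The paper gets this orbifold from the fiber sum of two rational elliptic surfaces, a symplectic K3 with $b_2^-=19$. It makes the Lagrangian vanishing-cycle spheres $D,D'$ symplectic and contracts $D$, $D'$ and an $A_{17}$ chain. This kills all of $b_2^-$ at the price of three cyclic singular points. Even then, abelianity of $\pi_1^{\orb}$ needs a genuine argument (Theorem~\ref{thm:pi1(M)}), which you wave through.

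Step~2 also lacks its actual mechanism. The two facts you lean on, that $Y$ is rational and that negative curves have uniformly bounded genus, are neither established nor what the argument uses. What the paper controls is the self-intersection of a negative nice curve in terms of its genus, $m\le 2g+\tau_4$ (Lemma~\ref{lem:universal-constants}), not the genus itself. The argument runs as follows:
\begin{itemize}
\item A universal bound on $\#P$ from three bases with nine distinct curves (Proposition~\ref{prop:bound-singular}).
\item A pigeonhole argument giving good bases that avoid $P$, so that squares are integers and adjunction is exact.
\item $m_1<\chi_1$, from the effectivity of $K_Y+D_i$ (Corollary~\ref{cor:m1-le-chi1}).
\item Universal bounds on $K_Y^2$.
\item Restriction to $\mu_0$-bounded bases. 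There, the high-genus curve is the positive one for $n\ge n_0$, and $18n^2-n\sqrt{18\tau_6}\le m_1<18n^2$ (Lemmas~\ref{lem:n0} and \ref{lem:bound-m_1}).
\item Computing $K_Y^2$ in two good bases gives $R\,(n^4/m_1-n'^4/m_1')\in\ZZ$ (Proposition~\ref{prop:denominators-self-intersection}). This is incompatible with the previous estimate once $n,n'$ are taken from a sufficiently sparse sequence of primes (Proposition~\ref{prop:non-existence-Y-1}).
\end{itemize}
That Diophantine step, not a finiteness count of genera, is where ``$N$ large enough'' enters.
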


The construction of $M$ is based on the following.
First, with techniques from symplectic geometry, we construct a symplectic orbifold with $b_2(X)=3=b_2^+(X)$ for which there are $3$ disjoint symplectic surfaces
$T_1,T_1',A$. This is impossible for K\"ahler manifolds since for them 
it always holds $b_2^+=1$. As these have positive self-intersection we can represent positive multiples of the homology classes by symplectic surfaces, so we can form many
tuples of
$3$ disjoint symplectic surfaces $(T_n, T'_m, A_k)$, $n,m,k \in \NN$, and so that
\[
1 \le n,m,k \le N, \quad \text{with $N$ a large number to be determined.}
\]
In fact, in \cite{Mu-jems} only tuples with $k=1$ are considered, as these suffice for the purposes.
For each tuple $(T_n, T'_m, A)$, we put
isotropy coefficients $p_{nm},p_{nm}^2,p_{nm}^3$, with $p_{nm}$ distinct primes, and consider the manifold $M$  obtained as a Seifert bundle 
over such orbifold. The proof that $M$ does not admit a Sasakian structure requires to check that
there does not exists any singular complex surface $Y$ with cyclic singularities with $b_2(Y)=3$, and a large number of tuples $\varepsilon_{nm}
=(D_1^{nm},D_2^{nm},D_3^{nm})$, $1\leq n,m\leq N$, each consisting of $3$ disjoint complex curves, which
will form the isotropy locus of the Seifert bundle $M\to Y$. The genera of these curves are controlled, and two of the curves are negative by the Hodge-Riemann relations. One needs to bound the number of singular points (universally, i.e.\ independently of $Y$). 
As the number of singular points is bounded, one has that 
most of the tuples $\varepsilon_{nm}$
avoid the singular points, and those satisfy the adjunction equality and have self-intersection which is an integer. 
In this way, we get many tuples non-proportional to each other and which do not pass through singularities; if $K_Y$ is the canonical divisor of $Y$, we can write $K_Y^2$ with respect to each basis, obtaining a collection of
diophantine equalities, which become impossible by 
choosing $N$ large enough.

\medskip

The present paper has two main objectives.
The first objective is to generalize the construction of a $K$-contact non-Sasakian manifold. This is done by also taking multiples of the third curve $A$, i.e. by allowing $k$ to also vary in the tuples $(T_n, T'_m, A_k)$. This changes the orbifold structure and the fundamental group, so we need to check that everything works in this case as well. In particular, this requires to check that the orbifold fundamental group $\pi_1^{\orb}(X)$ is abelian; as $X$ has many codimension-2 isotropy surfaces, this is a non-trivial matter, and it is handled in Theorem \ref{thm:pi1(M)}. We conclude the following.

\begin{theorem}\label{th:main-0}
For any numbers $N_1, N_2, K_0 \in \NN$, and for any collection of distinct primes $p_{nmk}$ with  
\[
p_{nmk}>\max(3,n,m,k) , \quad 1 \leq n \leq N_1, 1\leq m \leq N_2, 1\leq k\leq K_0,
\] 
there is a Smale-Barden manifold $M$ carrying a $K$-contact structure and with invariants given by
 \begin{equation*}
  H_2(M,\ZZ)= \ZZ^2 \oplus \bigoplus_{n,m, k=1}^{N_1, N_2, K_0}
 \left(\ZZ^{18n^2+2}_{p_{nmk}}  \oplus \ZZ^{18m^2+2}_{p_{nmk}^2} \oplus \ZZ^{18k^2+2}_{p_{nmk}^3}\right) .
 \end{equation*}
 and with second Stiefel-Withney class $w_2(M)=0$, i.e.\ $M$ is spin.
\end{theorem}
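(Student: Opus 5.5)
We build a symplectic orbifold $X$ carrying the required isotropy surfaces and take a Seifert bundle over it. Simple connectivity is checked with the orbifold fundamental group, homology with \cite[Theorem 36]{Mu}, and spin with a Stiefel--Whitney computation.

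\emph{Step 1: the base.} Following \cite{Mu-jems}, we start from a simply connected symplectic $4$-manifold $X$ with $b_2(X)=b_2^+(X)=3$. It contains three disjoint symplectic surfaces $T_1,T_1',A_1$ of genus $1$ and positive self-intersection whose classes form a basis of $H_2(X,\QQ)$. Since the classes are positive, Donaldson-type approximation (or explicit symplectic resolution of multiple copies) represents $nT_1$, $mT_1'$, $kA_1$ by connected symplectic surfaces $T_n,T'_m,A_k$. By adjunction their genus is $g=9n^2+1$ (resp.\ $9m^2+1$, $9k^2+1$); the normalization is chosen so that $2g=18n^2+2$. Each triple $(T_n,T'_m,A_k)$ is disjoint. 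Surfaces from different triples meet transversally and positively; after a symplectic perturbation they meet only pairwise and never three at one point. We assign isotropy $p_{nmk}$, $p_{nmk}^2$, $p_{nmk}^3$ to the three surfaces of the triple indexed by $(n,m,k)$. Intersecting surfaces then always carry coprime coefficients, because the primes are distinct and surfaces within a triple are disjoint. Each transverse intersection point becomes a cyclic quotient singularity. The result is a cyclic symplectic orbifold with an almost-Kähler structure.

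\emph{Step 2: the Seifert bundle and homology.} We choose a Seifert bundle $M\to X$ whose Chern class is an orbifold class. Its local invariants $j_i$ must be coprime to the multiplicities, and its first Chern class must make $H^2(X,\ZZ)\to H^2$ of the total space behave as in \cite{Mu}. Since $b_2(X)=3$, we get $b_2(M)=2$. The surjectivity $H^2(X,\ZZ)\to\bigoplus_i H^2(D_i,\ZZ_{m_i})$ holds because each surface has a nonzero self-intersection coprime to $p_{nmk}>\max(3,n,m,k)$. Indeed, the relevant pairings are multiples of $n,m,k$ and of the basic intersection numbers, all prime to $p_{nmk}$. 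Also $H_1(X)=0$. So \cite[Theorem 36]{Mu} gives $H_2(M,\ZZ)$ as stated, with torsion $\ZZ_{m_i}^{2g_i}$. The K-contact structure comes from the standard Boothby--Wang-type construction over an almost-Kähler cyclic orbifold with rational symplectic class proportional to $c_1$ of the Seifert bundle. For this we pick the symplectic form so that $[\omega]$ is rational, and the Chern class to be a positive multiple of it.

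\emph{Step 3: $\pi_1(M)=1$ (main obstacle).} From the homotopy sequence $\pi_1(S^1)\to\pi_1(M)\to\pi_1^{\orb}(X)\to1$, it suffices to show two things: $\pi_1^{\orb}(X)$ is abelian, hence equal to $H_1^{\orb}(X)$; and $H_1(M)=0$ with the fiber class killed. For the first, I would use $\pi_1(X)=1$. Then $\pi_1(X\setminus\bigcup D_i)$ is normally generated by meridians, and $\pi_1^{\orb}$ is its quotient by the relations $\mu_i^{m_i}=1$. The key is to show that meridians of different surfaces commute. At each transverse intersection point the local group is $\ZZ^2$, so meridians of intersecting surfaces commute there. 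Disjoint surfaces within a triple are connected through the chain of intersections with surfaces of other triples. Since every $T_n$ meets every $T'_{m'}$ and every $A_{k'}$ from other triples, the intersection graph is connected with many cycles. Moreover, the meridians of a single surface of genus $g$ are all conjugate, and the conjugates commute with the local groups. I would argue as in \cite{Mu-jems}, extended to include the $A_k$. Any conjugate of a meridian of $D_i$ equals a meridian at an intersection point with some $D_j$ of coprime order. A relation $\mu_i^{m_i}=1$ together with commuting elements of coprime orders then forces all conjugates of $\mu_i$ to coincide modulo the center. Hence the group is abelian, a quotient of $\bigoplus\ZZ_{m_i}$ cut down by the homology relations. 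This is where I expect the real work. Finally $H_1(M)=0$ follows from the primitivity of the Chern class, arranged by the choice of $j_i$.

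\emph{Step 4: spin.} By the formula of \cite{Mu}, $w_2(M)$ is the pullback of $w_2(X)+c_1$ corrected by the isotropy terms. All $m_i$ are odd, so those terms vanish mod $2$. We choose $X$ spin, or choose $c_1$ so that $c_1\equiv w_2(X)\pmod 2$, which gives $w_2(M)=0$.
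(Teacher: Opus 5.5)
Your overall architecture matches the paper: an orbifold base carrying isotropy surfaces, a Seifert bundle, \cite[Theorem 36]{Mu} for $H_2$, abelian $\pi_1^{\orb}$ together with $H_1(M)=0$ for simple connectivity, and odd multiplicities for spin. However, Step 3 has a genuine gap.

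Your connectivity claim is false. The surfaces $T_1,T'_1,A$ are disjoint, so $[T_n]\cdot[T'_m]=nm\,[T_1]\cdot[T'_1]=0$, and likewise $[T_n]\cdot[A_k]=[T'_m]\cdot[A_k]=0$. Two symplectic surfaces meeting transversally and positively with zero algebraic intersection are disjoint; in the construction the three families even lie in disjoint neighbourhoods of $T\cup D$, $T'\cup D'$ and $\cC\cup F$. So the intersection graph has three components, and no local commutation at intersection points can relate $\delta_{T_n}$ to $\delta_{T'_m}$ or $\delta_{A_k}$. The paper explicitly says it does not know whether these commute.

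Even inside one family your group-theoretic step is not valid. ``Commuting elements of coprime orders force all conjugates of $\mu_i$ to coincide modulo the center'' does not follow, because at each intersection point only one particular pair of representatives commutes. The paper gets within-family commutation from a braid-monodromy computation in the line-bundle model of a tubular neighbourhood (Theorem \ref{th:meridians-commute}). There, the vertical tangencies created by resolving nodes give $g_1=g_2$, and the nodes give $[g_1,g_2]=1$.

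The missing idea across families is to \emph{kill} generators rather than make them commute, and this needs the specific geometry you skipped. First, the base is not a smooth manifold. $\bar X$ is obtained by contracting $D$, $D'$ and the chain $\cC$ of $17$ $(-2)$-spheres, so it has singular points $p,p',q$. Hence $\pi_1(\bar X-P)$ is generated by loops $a,a',b$ with $a^2=a'^2=b^{18}=1$, and your Step 3 never addresses these. Also, $T_1,T'_1,A$ have genus $10$, not $1$. The paper then argues in three steps:
\begin{itemize}
\item The vanishing-cycle discs give $\beta=\alpha^2$ and $a=a'=\alpha^3$.
\item The spheres $E_2,E_3$ meet $A_k$ in $12k$, resp.\ $6k$ points, and meet the chain in curves with meridians $b^5,b^{13}$, resp.\ $b^2,b^{16}$ (Lemma \ref{lem:resolution-cyclic}). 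This gives $6\sum_j j\,\delta_{A_j}=0$, hence $\delta_{A_k}=1$ by coprimality with $m_{A_k}$.
\item The punctured torus $T=s\times(\alpha\beta^{-2})$ gives $a^3\prod_j c_j^{9j}=[\alpha\beta^{-2},s]$. So $sas^{-1}=\prod_j c_j^{9j}$ has odd order, which forces $a=1$ and then $c_n=1$; the same works with $T'$.
\end{itemize}
Thus $\pi_1^{\orb}(X')$ is cyclic, generated by $b$.

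Smaller points:
\begin{itemize}
\item Primitivity of $c_1(M/\ZZ_\mu)\in H^2(\bar X-P,\ZZ)$ is asserted, not arranged. The paper chooses the $\beta_k$ so that the $[A]$-coefficient is $1$, using that the numbers $k\mu/m_{A_k}$ have $\gcd$ $1$. Pairing with $[A]$ then shows only $2$ or $3$ could divide, and these are excluded by taking $b_1=1$ and $6\mid b_n$ for $n\ge 2$.
\item $w_2(\bar X-P)=0$ is not a choice; it follows from $K_{X'}=0$.
\item A single $T_n$ lies in many triples, so it must carry multiplicity $\prod_{m,k}p_{nmk}$, not ``$p_{nmk}$ for the triple $(n,m,k)$''. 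The torsion then splits by the Chinese remainder theorem.
\end{itemize}
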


Applying the results from \cite{Mu-jems} to the case $K_0=1$ and $N_1=N_2=N$ in Theorem \ref{th:main-0} above, we immediately get the following result.

\begin{corollary}
\label{cor:main-1}
There is a large number $N$ so that for any $K_0 \ge 1$ and for any $N_1, N_2 \ge N$, 
the Smale-Barden manifolds $M$ from Theorem \ref{th:main-0} do not admit Sasakian structures.
\end{corollary}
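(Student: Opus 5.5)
The plan is to reduce Corollary \ref{cor:main-1} to the non-existence argument of \cite{Mu-jems} as sketched in the introduction. That argument only uses sub-collections of torsion summands of $H_2(M,\ZZ)$ with many tuples.

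Suppose $M$ from Theorem \ref{th:main-0}, with $N_1,N_2\ge N$ and arbitrary $K_0$, admits a Sasakian structure. Then it admits a quasi-regular one, so $M\to Y$ is a Seifert bundle over a cyclic Kähler orbifold $Y$. By \cite[Theorem 36]{Mu}, $b_2(Y)=b_2(M)+1=3$. The torsion of $H_2(M,\ZZ)$ also determines the isotropy locus.

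\textbf{Step 1: read off the tuples.} For each prime $p_{nmk}$, the $p_{nmk}$-primary part is $\ZZ_{p}^{18n^2+2}\oplus\ZZ_{p^2}^{18m^2+2}\oplus\ZZ_{p^3}^{18k^2+2}$. This forces complex curves $D_1^{nmk},D_2^{nmk},D_3^{nmk}$ with multiplicities $p,p^2,p^3$ and genera $9n^2+1$, $9m^2+1$, $9k^2+1$. Since the multiplicities are not coprime, the three curves are pairwise disjoint, hence linearly independent in $H_2(Y)$. The same argument as in \cite{Mu-jems} applies, because it needs only the structure of each primary component together with $p_{nmk}>\max(3,n,m,k)$, so that distinct genera produce distinct summands.

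\textbf{Step 2: restrict to $k=1$.} Keep only the tuples with $k=1$ and $1\le n,m\le N$, which is possible since $N_1,N_2\ge N$. This gives exactly the family $\varepsilon_{nm}=(D_1^{nm1},D_2^{nm1},D_3^{nm1})$, $1\le n,m\le N$, that appears in \cite{Mu-jems}, with the same genera $9n^2+1$, $9m^2+1$, $10$ and the same multiplicity pattern. The remaining tuples, and also the extra isotropy curves with $k>1$, add further curves to $Y$. They do not affect the argument. In \cite{Mu-jems} the contradiction comes from a universal bound on the number of singular points of $Y$ and from diophantine equations for $K_Y^2$ written in the bases given by most $\varepsilon_{nm}$, and these involve only the curves in those tuples.

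\textbf{Main obstacle.} The delicate point is checking that nothing in \cite{Mu-jems} uses the \emph{absence} of other isotropy curves or the exact value of $H_2(M,\ZZ)$. I expect two places to need attention. First, the universal bound on singular points of $Y$ must depend only on $b_2(Y)=3$ and on the local analysis near curves, not on the total isotropy locus. Second, the Hodge-Riemann argument that two curves in each tuple are negative must still hold. I would go through those lemmas of \cite{Mu-jems} and verify that each hypothesis is local to a tuple or depends only on $b_2$. Then choosing $N$ as in \cite{Mu-jems}, independent of $K_0$, gives the contradiction for every $N_1,N_2\ge N$.
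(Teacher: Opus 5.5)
Your proposal is correct and follows the paper's route. The paper likewise keeps only the triples $\e^{nm1}$, $1\le n,m\le N=\min(N_1,N_2)$, and notes in Section \ref{sec:5} that the argument of \cite{Mu-jems} uses nothing beyond these bases, so the extra isotropy curves with $k>1$ are irrelevant. One small correction: the linear independence of each triple in $H_2(Y,\QQ)$ comes from the surjectivity condition (2) of Proposition \ref{thm:16MRT}, not from disjointness alone, since disjoint curves of square zero can be proportional.
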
 

We can get an improvement of this corollary with some work, using the fact that by increasing the number $K_0$ of multiples of the third curve we get better bounds.

\begin{theorem} \label{th:main-1}
Consider $\t_0=10 \, 290$, $\k_0=2 \t_0+2$, $\mu_0=2\k_0$.
There exists a big number $\hat N$ so that for any $N_1 \ge \hat N$, 
$N_2\geq \mu_0$, $K_0\geq \kappa_0$, the $K$-contact Smale-Barden manifolds $M$ constructed in Theorem \ref{th:main-0} do not admit any Sasakian structure.
\end{theorem}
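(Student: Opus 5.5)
We argue by contradiction. Suppose $M$ carries a Sasakian structure. Then it carries a quasi-regular one, so $M\to Y$ is a Seifert bundle over a cyclic Kähler orbifold $Y$, a normal projective surface with cyclic quotient singularities. From $H_2(M,\ZZ)$ and \cite[Theorem 36]{Mu} we read off the following data.
\begin{itemize}
\item We have $b_2(Y)=3$.
\item For each triple $(n,m,k)$ the isotropy locus contains curves $D_1^{nmk},D_2^{nmk},D_3^{nmk}$ with multiplicities $p_{nmk},p_{nmk}^2,p_{nmk}^3$ and genera $9n^2+1$, $9m^2+1$, $9k^2+1$.
\item The three curves of a triple are pairwise disjoint, since their multiplicities are not coprime. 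Their classes are linearly independent, hence they form a rational basis of $H_2(Y,\QQ)$.
\end{itemize}
Since $b_2^+(Y)=1$, the Hodge index theorem says that in each triple at most one curve has positive self-intersection. The other two are negative curves or have square $0$; disjointness and independence rule out square $0$.

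The first step is to bound the number of singular points of $Y$ universally, as in \cite{Mu-jems}. A negative curve of large genus through a singular point gives adjunction corrections, and these are controlled by the local invariants of the cyclic singularities. Since the Picard number is $3$ and the curves are disjoint, one bounds the number of singular points that can lie on the union of all these curves. We then discard the boundedly many triples meeting singularities, which costs only a bounded number of indices in each direction. For every remaining triple, each curve satisfies the adjunction equality
\[
D^2+K_Y\cdot D=2g(D)-2,
\]
and $D^2\in\ZZ$.

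Now express $K_Y$ in the basis of a triple as $K_Y=\sum_i a_iD_i$. Orthogonality of the basis gives $a_i=(2g_i-2-D_i^2)/D_i^2$ and
\[
K_Y^2=\sum_i \frac{(2g_i-2-D_i^2)^2}{D_i^2}.
\]
This number is independent of the triple, which gives a large family of diophantine equalities $\sum_i (18x_i^2-D_i^2)^2/D_i^2=\mathrm{const}$, where $x_i$ ranges over $n,m,k$.

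The improvement over Corollary \ref{cor:main-1} comes from the freedom in $k$. Fix $n$ (large, at least $\hat N$) and $m$, and let $k$ vary over $\kappa_0=2\tau_0+2$ values; alternatively vary $m$ over $\mu_0=2\kappa_0$ values. The triples then share two genera but have different third genus. A pigeonhole argument should force two triples whose positive or negative pattern coincides and in which two curves are numerically related. For instance, if the negative curves of two triples are disjoint, they span a negative-definite sublattice, and rank considerations limit how many distinct ones exist. The constant $\tau_0=10\,290$ presumably bounds the number of singular points, or the number of configurations excluded by the singular-point analysis, so $2\tau_0+2$ values of $k$ guarantee enough clean triples.

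For two clean triples sharing a curve class type, we subtract the two expressions for $K_Y^2$. The terms of order $x^4/D^2$ must then match, and this forces $D_i^2$ to grow in a way that contradicts integrality once $n\ge \hat N$ is large. This is the same diophantine impossibility as in \cite{Mu-jems}, but we now need only one index to be large.

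I expect the main obstacle to be making the singular-point bound explicit, which produces $\tau_0$. The second delicate point is the combinatorial step that shows $\kappa_0$ and $\mu_0$ values suffice to produce two comparable singularity-free triples.
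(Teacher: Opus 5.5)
Your outline (quasi-regular reduction, orthogonal bases $\e^{nmk}$, Hodge index, a universal bound on $\#P$, good bases, comparing $K_Y^2$ across bases) matches the paper's. However, both steps that actually produce the contradiction are set up in a way that fails.

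\textbf{Choice of bases.} You fix $n$ (and $m$) and vary $k$. Nothing in your argument prevents a single negative curve of genus $9n^2+1$ or $9m^2+1$ from lying in all of your triples. Proj-equivalence (Lemma \ref{lem:proj-equiv}) only controls shared positive curves. One singular point on such a curve would then spoil every triple, so ``discarding boundedly many triples'' is unjustified. The paper instead uses the triples $(n_k,\kappa_0+k,k)$, $1\le k\le\kappa_0$, with distinct $n_k$. All $3\kappa_0$ curves then have pairwise distinct genera, hence are distinct curves. Since a point of a cyclic orbifold lies on at most two isotropy curves, each singular point spoils at most two of these bases (Lemma \ref{lem:above}). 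The bound $\#P\le\hat\tau_0=10\,290$ is Proposition \ref{prop:bound-singular} applied to the triples $(3,2,1),(6,5,4),(9,8,7)$, whose nine curves again have distinct genera (Proposition \ref{prop:hat-tau_0}). So $\kappa_0=2\hat\tau_0+2$ leaves two good bases. Note that $\mu_0=2\kappa_0$ is just the bound on $m_k=\kappa_0+k$, not an alternative family to vary.

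\textbf{The diophantine step.} With the same $n$ in both triples, subtracting the two expressions for $K_Y^2$ equates $18^2n^4(1/m_1-1/m'_1)$ with a quantity of bounded denominators. This is perfectly consistent (e.g.\ $m_1=m'_1$ with matching $(m,k)$-terms), so ``the $x^4/D^2$ terms must match'' gives nothing. The contradiction needs two \emph{different} large first indices $n\ne n'$, both primes from a very sparse sequence $\hat\cP$, where each term exceeds $\hat R/(1-\eps_0)$ times the square of the previous one. It also needs the following ingredients:
\begin{itemize}
\item the universal bounds $-\hat\tau_2\le K_Y^2\le\hat\tau_1$ and $-C^2\le 2g(C)+\hat\tau_4$ for negative curves, so that, with $m\le\mu_0$ and $k\le\kappa_0$, the quantities $m_2,m_3$ are bounded and $\hat R(n^4/m_1-n'{}^4/m'_1)\in\ZZ$ (Proposition \ref{prop:R-hat});
\item the fact that $D_1$ is the positive curve once $n\ge\hat n_0$ (Lemma \ref{lem:hat-n0});
\item the two-sided bound $18n^2-n\sqrt{18\hat\tau_6}\le m_1<18n^2$ (Lemma \ref{lem:bound-m1-hat}).
\end{itemize}
Write $m_1=da$, $m'_1=da'$ with $\gcd(a,a')=1$. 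Then $a\mid\hat Rn^4$, so $a=d_in^\beta$ with $d_i\mid\hat R$. Sparsity of $\hat\cP$, together with $m_1/m'_1\in((1-\eps_0)n^2/n'{}^2,(1+\eps_0)n^2/n'{}^2)$, forces $\beta=2$ for the larger index. Hence $m_1\le 17n^2$, contradicting the lower bound. This is also why $\hat N=\hat\nu_{\kappa_0}$ is so large: you need $\kappa_0$ distinct sparse primes as first indices, not one fixed large $n$.
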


We remark that, while the number $\hat N$ is very large, it is considerably smaller than the number $N$ from Corollary \ref{cor:main-1}. So, at the expense of increasing $K_0$ a bit, we get much better bounds for the bounds of $N_2$ and $N_1$.

\medskip

The second aim of this work is to determine a specific bound for the parameter $N$ that appears in the manifolds obtained in Theorem \ref{thm:main-jems} and for $\hat N$ in Theorem \ref{th:main-1}, thus avoiding the 
wording ``for $N$ large enough''. We obtain the following result.

\begin{theorem}\label{th:main-2}
Let $R=18^2 \lcm(2,3,4,\dots, 2\,548\,292\,959\,346)$.
For any number $N$ such that
\[
N \ge 
\tfrac{16}{17} \left(\tfrac{18}{16}\right)^{2^{65\,499}} (rR)^{2^{65\,500}-1} , 
\]
the $K$-contact Smale-Barden manifolds $M$ given in Theorem \ref{thm:main-jems} cannot admit a Sasakian structure. Also, the $K$-contact Smale-Barden manifolds $M$ given in    Theorem \ref{th:main-0} with $N_1, N_2 \ge N$ cannot admit a Sasakian structure.
\end{theorem}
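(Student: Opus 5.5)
We follow the strategy of \cite{Mu-jems} and track every constant. Suppose $M$ carries a Sasakian structure. Then it has a quasi-regular one, so $M\to Y$ is a Seifert bundle over a cyclic K\"ahler orbifold $Y$ with $b_2(Y)=3$. By \cite[Theorem 36]{Mu}, $H_2(M,\ZZ)$ forces the isotropy locus to contain, for each pair $(n,m)$, a tuple $\varepsilon_{nm}=(D_1^{nm},D_2^{nm},D_3^{nm})$ of disjoint complex curves. Their multiplicities are $p_{nm},p_{nm}^2,p_{nm}^3$, and their genera are $9n^2+1$, $9m^2+1$, $10$ (respectively $9k^2+1$ in the setting of Theorem \ref{th:main-0}). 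The curves of one tuple are linearly independent in $H_2(Y,\QQ)$, and by Hodge index two of them have negative self-intersection.

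\textbf{Step 1: bound the singularities.} We bound the number of singular points of $Y$ and their local orders. The plan is to use orbifold Bogomolov--Miyaoka--Yau type inequalities together with the fact that $b_2=3$ and $\chi(Y)$ are fixed. This gives an explicit bound $r$ on the number of singular points. Each singular point is a cyclic quotient $\CC^2/\ZZ_d$. The local orders $d$, and the denominators appearing in $D^2$ and $K_Y\cdot D$, are bounded in terms of the invariants, which is where $\lcm(2,\dots,2\,548\,292\,959\,346)$ comes from. Consequently $R\cdot D_i\cdot D_j\in\ZZ$ and $R\, K_Y\cdot D_i\in\ZZ$ for all curves involved; the factor $18^2$ absorbs the adjunction terms $2g-2=18n^2$.

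\textbf{Step 2: pass to clean tuples.} At most $r$ singular points exist, and the tuples are pairwise disjoint except through singular points. So all but at most about $3r$ indices $n$ (and $m$) give tuples avoiding $\mathrm{Sing}(Y)$. For those tuples adjunction holds exactly: $D^2+K_Y\cdot D=2g(D)-2$. Writing $K_Y$ in the $\QQ$-basis given by a clean tuple, we can express $K_Y^2$ through the intersection matrix. Two clean tuples then give Diophantine relations among the integers $R D_i^2$, $R K_Y\cdot D_i$, with coefficients growing like $n^2,m^2$.

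\textbf{Step 3: iterate and reach a contradiction.} Here we make the ``$N$ large'' argument quantitative. At each stage we have a set of indices of size $N_s$ and a finite set of possible values of some invariant, of size at most $rR$ times previous data. A pigeonhole argument yields a subset of size roughly $N_s/(rR)$ on which the invariant is constant. Alternatively, a Ramsey-type squaring step is needed to compare pairs, which produces the recursion $N_{s+1}\gtrsim (\tfrac{16}{17}\cdot\ldots)\,N_s^{1/2}/(rR)$. Iterating this $65\,500$ times gives the doubly exponential bound $\tfrac{16}{17}(\tfrac{18}{16})^{2^{65\,499}}(rR)^{2^{65\,500}-1}$. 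Once enough indices survive, the quadratic growth of the genera is incompatible with bounded $K_Y^2$ and with the fixed negative-definite part of the form. Concretely, $K_Y\cdot D_1^{n}$ grows like $n^2$, but $(K_Y\cdot D_1^n)^2/(D_1^n)^2$ is bounded by $K_Y^2$ via Hodge index, so $|D_1^n{}^2|$ must grow like $n^4$. Comparing two such $n$ within one eigen-direction then violates integrality modulo $R$.

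\textbf{Step 4: the generalized manifolds.} For the manifolds of Theorem \ref{th:main-0} we restrict to the subfamily $k=1$. With $N_1,N_2\ge N$, this contains the configuration of Theorem \ref{thm:main-jems}, so the same argument applies verbatim.

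\textbf{Main obstacle.} The hardest part is making Step 1 fully explicit: bounding the number and orders of cyclic singularities universally, giving the constants $r$ and $2\,548\,292\,959\,346$. The second difficulty is organizing Step 3 so the number of pigeonhole/squaring stages is exactly counted.
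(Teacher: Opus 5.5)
There is a genuine gap. Your Step 3 is not an argument, and it rests on a misreading of the constants. In the statement, $r=1+10^{-32}$ is the prime-gap ratio: by Dusart, for $n\ge R$ there is a prime in $(n,rn)$. It is used only to bound a sequence of primes from above, and it is not a bound on $\#\mathrm{Sing}(Y)$. Likewise $2\,548\,292\,959\,346=\tau_4+18\mu_0^2+2$ bounds the integer negative self-intersections $m_2,m_3$ of the curves of a \emph{good} basis, via $-C^2\le 2g(C)+\tau_4$. The orders of singular points are never bounded; good bases are used precisely to avoid them.

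The doubly exponential size does not come from iterated pigeonholing, and your recursion names no invariant and no squaring step. In the paper one takes primes $\nu_0<\nu_1<\cdots$, with $\nu_{i+1}$ the first prime above $\tfrac{17}{16}R\nu_i^2$, and the $\mu_0-1$ bases $\varepsilon^{\nu_{m-2}\,m}$ for $2\le m\le\mu_0=4\tau_0+3$. Here $\#P\le\tau_0=16\,374$ comes from $-6+4\sum g$ over three bases with nine distinct curves, not from a BMY inequality. Each singular point lies in at most four of these bases, so at least two of them are good, and $N\ge\nu_{\mu_0-2}$ is needed just to have these bases. Your Step 2 misses what makes this count work. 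The genus-$10$ curve can be shared by many tuples, and only the Hodge-index (proj-equivalence) lemma shows that one $D_3$ lies in at most two of the chosen bases. Without it, a single singular point could spoil every tuple.

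Your endgame is also wrong: $K_Y\cdot D_1$ does not grow like $n^2$, and $D_1^2$ does not grow like $n^4$. Adjunction gives $K_Y\cdot D_1=18n^2-m_1$, and effectivity of $K_Y+D_2$ gives $m_1<18n^2$. Hodge index alone gives $(K_Y\cdot D_1)^2\ge K_Y^2D_1^2$, which is the reverse of the inequality you use. The upper bound $\tfrac{(18n^2-m_1)^2}{m_1}\le\tau_6$ needs $K_Y^2\le\tau_1$ together with the bounds on $m_2,m_3$. It yields $18n^2-n\sqrt{18\tau_6}\le m_1<18n^2$ for $n\ge n_0$, where $D_1$ is the positive curve.

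The missing key step is arithmetic. Equating $K_Y^2$ computed in two good bases gives $R\bigl(\tfrac{n^4}{m_1}-\tfrac{n'^4}{m'_1}\bigr)\in\ZZ$. For $n=\nu_i$, $n'=\nu_j$ prime, this forces $\tfrac{m_1}{m'_1}=\tfrac{d_i}{d_j}\tfrac{n^\beta}{n'^\gamma}$ with $\beta,\gamma\le 2$ and $d_i,d_j\mid R$, with $d\le 17$ when the exponent is $2$. Combine this with the sparseness of the $\nu_i$ and with $\tfrac{m_1}{m'_1}\in\bigl((1-\tfrac1{17})\tfrac{n^2}{n'^2},(1+\tfrac1{17})\tfrac{n^2}{n'^2}\bigr)$. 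Together they force $m_1=dn^2$ with $d\le 17$, contradicting $m_1\ge 18n^2-n\sqrt{18\tau_6}>17n^2$.
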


The computation of this bound for $N$ follows the arguments of \cite[Section 8]{Mu-jems}, by computing explicitly each of the universal constants appearing along the paper.

We also develop an anternative argument to deduce the impossibility of the existence of a Sasakian structure, this time using specifically the multiples of the third curve. This argument lowers the value of $N$, but requires some conditions on the number $K_0$.

\begin{theorem} \label{th:main-3}
Let $\hat R= 18^2 \lcm(2,3,4,\dots, 579\,510\,414\,464)$, and $\hat r=1+10^{-28}$.
Let $\hat N$ an integer such that
\[
\hat N \ge 
\tfrac{16}{17} \left(\tfrac{18}{16}\right)^{2^{20\,582}} (\hat r \hat R)^{2^{20\,583}-1} \, .
\]
The K-contact Smale-Barden manifolds $M$ from Theorem \ref{th:main-0} with $N_1\ge \hat N$,
$N_2 \ge 41\,164$ and $K_0 \ge 20 \, 582$, cannot admit a Sasakian structure.
\end{theorem}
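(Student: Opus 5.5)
Suppose, by contradiction, that $M$ from Theorem \ref{th:main-0}, with $N_1\ge \hat N$, $N_2\ge 41\,164$ and $K_0\ge 20\,582$, carries a Sasakian structure. Then it carries a quasi-regular one, so $M\to Y$ is a Seifert bundle over a cyclic K\"ahler orbifold $Y$. By \cite[Theorem 36]{Mu}, the torsion of $H_2(M,\ZZ)$ determines the isotropy locus:
\begin{itemize}
\item $b_2(Y)=3$;
\item for each triple $(n,m,k)$ there are three disjoint complex curves $D_1^{nmk},D_2^{nmk},D_3^{nmk}$ of genera $9n^2+1$, $9m^2+1$, $9k^2+1$;
\item these curves carry multiplicities $p_{nmk},p_{nmk}^2,p_{nmk}^3$.
\end{itemize}

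\textbf{Step 1: few singular points.} As in \cite{Mu-jems}, the three disjoint curves in each triple span $H_2(Y,\QQ)$, and $b_2^+(Y)=1$. So by Hodge index at most one curve of each triple is positive, and the other two are negative. I would first bound the number $s$ of singular points of $Y$ universally, making every constant explicit.
\begin{itemize}
\item Use the orbifold Bogomolov--Miyaoka--Yau inequality and the Euler characteristic of $Y$.
\item The curves have local contributions bounded in terms of the genera, which fixes the constants $579\,510\,414\,464$ in $\hat R$ and $\hat r$.
\end{itemize}
Each singular point lies on at most one curve of a disjoint triple. Moreover, distinct primes $p_{nmk}$ force distinct curves. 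Hence all but at most $O(s)$ triples avoid the singularities. For those triples the adjunction formula $2g-2=D^2+K_Y\cdot D$ holds, with $D^2\in\ZZ$.

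\textbf{Step 2: use the third curve.} This is where the hypothesis on $K_0$ enters. I would fix $n$, and I would pick $m$ among $N_2\ge 2\cdot 20\,582$ values and $k$ among $K_0\ge 20\,582$ values, all avoiding the singular set. This should be possible once $K_0$ and $N_2$ exceed roughly the bound on $s$ (the numbers $20\,582$ and $41\,164=2\cdot 20\,582$ suggest pigeonholing with $s\le 20\,581$).
\begin{itemize}
\item For fixed $n$, the curves $D_3^{nmk}$ with varying $k$ have genera $9k^2+1$ and are disjoint from the fixed $D_1$ and $D_2$ of their triple.
\item Writing $K_Y$ and the intersection form in each basis, the adjunction equalities yield quadratic relations in $k$.
\item Comparing many such $k$ forces the classes to lie in a constrained configuration, essentially a rank-one family. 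This pins down $K_Y$ up to rational data with denominators dividing $\hat R$.
\end{itemize}

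\textbf{Step 3: the diophantine contradiction.} Let $n$ vary up to $N_1$. Expressing $K_Y^2$ in the bases from different $n$ gives a chain of equalities. Following \cite[Section 8]{Mu-jems} gives a recursive growth estimate: each step squares the size, up to factors $\hat r\hat R$ and $18/16$. After $2^{20\,582}$-type iterations, the number of levels being controlled by $s$, the required value exceeds $\hat N$ unless no solution exists. This yields the stated bound.

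The main obstacle is Step 2. Making the use of the third-curve multiples rigorous, so as to replace the $65\,499$ iterations of Theorem \ref{th:main-2} by $20\,582$, requires showing that the choices in $m$ and $k$ genuinely reduce the number of unknowns. I would first try to prove that at most one of $D_2,D_3$ can be positive across all triples, and then run the argument of \cite{Mu-jems} with the roles of the second and third curves exchanged.
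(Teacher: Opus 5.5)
\textbf{Verdict: genuine gap.} The outline does not contain the argument that makes the statement work, and it misidentifies how the multiples $A_k$ are used.

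\textbf{Step 1 rests on a false claim.} You assert that distinct primes $p_{nmk}$ force distinct curves, so that all but $O(s)$ triples avoid $P$. In fact a single curve of $Y$ may belong to many triples; in the K-contact model, $T_n$ lies in every triple with first index $n$. One singular point can therefore spoil arbitrarily many triples.

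\textbf{The missing idea is purely combinatorial.} The role of the third-curve multiples is only to provide triples whose curves have pairwise distinct genera, which makes them pairwise distinct curves.
\begin{itemize}
\item The triples $(n,m,k)=(3,2,1),(6,5,4),(9,8,7)$ have nine distinct curves. Proposition \ref{prop:bound-singular} then gives $\#P\le\hat\t_0=10\,290$ directly, with no proj-equivalence argument. You do not need a new BMY computation, which you left unspecified anyway.
\item Take the bases $\hat\e(k)=\e^{n_km_kk}$ with $m_k=\k_0+k$, $n_k>2\k_0$ and $1\le k\le\k_0$. All their genera are distinct. A singular point lies on at most two isotropy curves, hence in at most two of these bases.
\item With $\k_0=2\hat\t_0+2=20\,582$, at least two of these bases are good and $(\k_0,2\k_0)$-bounded. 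This is exactly where $K_0\ge 20\,582$ and $N_2\ge 2\k_0=41\,164$ enter. Your guess $s\le 20\,581$ misses this factor $2$.
\end{itemize}

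\textbf{Step 2 is unjustified and unnecessary.} Fixing $n$, varying $k$, and obtaining a ``rank-one family'' that pins down $K_Y$ has no basis. Nothing forces $D_1^{nmk}$ and $D_2^{nmk}$ to be the same curves for different $k$, and no relation quadratic in $k$ is used. Likewise, no exchange of the roles of the second and third curves is needed.

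\textbf{Step 3 defers the contradiction to \cite{Mu-jems} without its ingredients.} The argument runs on two good $(\k_0,\m_0)$-bounded bases whose first indices $n\ne n'$ are primes:
\begin{itemize}
\item For $n\ge\hat n_0$, the curve of genus $9n^2+1$ is the positive one.
\item Corollary \ref{cor:m1-le-chi1}, together with the bounds on $K_Y^2$ and on negative self-intersections, gives $18n^2-n\sqrt{18\hat\t_6}\le m_1<18n^2$.
\item Equating $K_Y^2$ in the two bases gives $\hat R(n^4/m_1-n'^4/m'_1)\in\ZZ$, with $\hat R=18^2\lcm(2,\dots,\hat\t_4+18\m_0^2+2)$. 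This is where $579\,510\,414\,464=\hat\t_4+18\m_0^2+2$ comes from, not from the singular-point bound.
\item Since $n,n'$ are prime, this forces $m_1/m'_1=\frac{d_i}{d_j}\frac{n^\b}{n'^\g}$.
\item Choose $n,n'$ in a disperse prime sequence with $\hat\nu_{i+1}>\hat R\hat\nu_i^2/(1-\eps_0)$. Then $\b=\g=2$ and $d_i=d_j\le 17$, which contradicts the lower bound on $m_1$.
\end{itemize}
The stated $\hat N$ is simply the upper estimate for $\hat\nu_{\k_0}$ after $\k_0$ squaring steps (not ``$2^{20\,582}$ iterations''), and $\hat r$ is just Dusart's prime-gap factor used in that estimate.
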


Finally, in Theorems \ref{thm:16} and \ref{thm:17} we give a sharper version of the construction of $>K$-contact non-Sasakian manifolds, and we obtain many more examples of such manifolds. This is achieved by inspecting the essential property that we need in the torsion of $H_2(M,\ZZ)$ in order to ensure the non-existence of a Sasakian structure.

\noindent {\bf Acknowledgements.}
The second author is grateful to Enrique Artal, Eva Elduque and Juan Viu-Sos for useful conversations about fundamental groups of complements of plane curves.

\section{Construction of the $K$-contact manifold}

In this section we partly review the construction of the $K$-contact manifold from \cite{Mu}, with the necessary modifications.
Take the rational elliptic surface $S$ with singular fibers $I_9+3A_1$, that
appears in  \cite[p.\ 568]{Beauville}.
This is a rational elliptic surface, with an
elliptic fibration $S\to \CP^1$ obtained from a 
suitable pencil of cubic curves after blow-ups,
which has four singular fibers; three of them are rational nodal curves (the $A_1$'s), and one
is a cycle of $9$ rational curves $C_1\cup C_2\cup
\ldots \cup C_9$ (the $I_9$ singularity). This fibration has three sections
$\s_1,\s_2,\s_3$ which are rational curves of
self-intersection $\s_i^2=-1$, and 
$\s_1\cdot C_1=1$, $\s_2\cdot C_4=1$,
$\s_3\cdot C_7=1$.

Let $F$ be a smooth fiber of the elliptic fibration $S\to \CP^1$, and fix 
an isomorphism $H_1(F,\ZZ) \cong \ZZ^2$. The
 monodromy of the fibration, No.\ 63 of \cite[Table 3]{Fukae}, is described by the
 equality 
 $X_{[1,1]}X_{[1,-2]}X_{[2,-1]} A^9 = \mathrm{I}$, 
 $A=X_{[1,0]}$. We use the notation
 $X_{[p,q]}=\left(\begin{array}{cc} 1+pq & -p^2 \\ q^2 & 1-pq\end{array}\right)$, which corresponds
 to a Dehn twist along the curve $(p,q) \in\ZZ^2=
 H_1(F,\ZZ)$. 
 The vanishing cycle of $X_{[p,q]}$ is $(p,q)$, with the choice of path to the critical point
taken in \cite{Fukae}. Therefore monodromies corresponding to going around the nodal curves
are $X_{[1,1]}$, $X_{[1,-2]}$ and $X_{[2,-1]}$, whence the vanishing cycles are $(1,1)$, $(1,-2)$ and $(2,-1)$.
Finally, $A^9$ is the monodromy around the $I_9$
 singular curve.

 We take two copies $S_1,S_2$ of $S$ as above, with two smooth fibers $F_1,F_2$. They have 
 self-intersection $F_1^2=F_2^2=0$.
 We choose a symplectomorphism $\varphi:F_1 \to F_2$ such that the vanishing cycles match,
 that is, the identity in homology $\varphi_*:H_2(F_1,\ZZ) \to H_2(F_2,\ZZ)$. 
Take the Gompf symplectic sum 
 \begin{equation*}
 X=S_1 \#_{F_1= F_2} S_2 = (S_1-\nu(F_1))\cup_{\bd \nu(F_1)\cong\bd\nu(F_2)} (S_2-\nu(F_2))\, .
  \end{equation*}
As $b_2(S_1)=b_2(S_2)=10$, then 
$\chi(S_1)=\chi(S_2)=12$. So $\chi(X)=24$ and hence $b_2(X)=22$. It is easy to see that $X$ is simply connected.

Let $C_1,\ldots, C_9$ be the $I_9$-cycle of $S_1$, with sections $\s_1,\s_2,\s_3$ as before.
Let $C'_1,\ldots, C'_9$ be the $I_9$-cycle of $S_2$, with corresponding sections $\s'_1,\s'_2,\s'_3$.
By using \cite[Lemma 24]{MRT}, we can glue the sections to produce symplectic surfaces $E_1,E_2,E_3$
of square $-2$.

 Fix a fiber $F\subset \bd \nu(F_1)=\bd \nu(F_2)\subset X$. 
 Take the vanishing cycle $a=(1,1)$ in $F$, and the two corresponding vanishing thimbles $D_1,D_2$ in $S_1-\nu(F_1), S_2-\nu(F_2)$, respectively.
We glue them to form a Lagrangian $(-2)$-sphere $D$. Next take as dual curve in $F$ the curve
$b=(1,-2)$, intersecting $a$ transversally and positively at three points. Take the torus $T=b\x S^1 \subset
F\x S^1=\bd \nu(F_1)=\bd \nu(F_2)$. This produces a pair of surfaces $D,T$ with 
  \begin{equation}\label{eqn:corr01}
   D^2=-2, \, D\cdot T=3, \,  T^2=0,
   \end{equation}
where $D$ is a Lagrangian sphere and $T$ is a Lagrangian torus.

With the second vanishing cycle $b=(1,-2)$, we do the same thing using another fiber $F'
\subset \bd \nu(F_1)=\bd \nu(F_2)\subset X$. We obtain $D'$ a Lagrangian $(-2)$-sphere by gluing the vanising thimbles associated to $b$, and a Lagrangian
torus $T'=a \times S^1$, with $T'^2=0$, $D'^2=-2$ and $T' \cdot D'=3$.

Finally, we can change slightly the symplectic form so that $D,D'$ become
symplectic $(-2)$-surfaces, and $T,T'$ become symplectic tori by \cite[Lemma 27]{MRT}.
Next, we see that there is a chain of $17$ rational $(-2)$-curves
  \begin{equation*}
  \cC= C_8\cup  \ldots\cup  C_2\cup C_1\cup E_1 \cup  C_1'\cup  C_2'\cup  \ldots\cup  C_8'\, ,
  \end{equation*}
where $E_1$ is the gluing of two sections, so it is a symplectic $(-2)$-sphere intersecting transversely $C_1$ and $C'_1$.

We have the following:
\begin{itemize}
    \item There are neighborhoods $U$ of $\cC$, $V$ of $T \cup D$, and $V'$ of $T' \cup D'$ which are disjoint.
    \item The section $E_2$ is disjoint from $T \cup D$ and $T' \cup D'$, and intersects transversely the chain $\cC$ at two points, one of them in $C_4$ and another in $C'_4$.
    \item The section $E_3$ is disjoint from $T \cup D$ and $T' \cup D'$, and intersects transversely the chain $\cC$ at two points, one of them in $C_7$ and another in $C'_7$.
\end{itemize}

\subsection*{Construction of the orbifold $X'$}

 We contract $D$ and $D'$ to two points $p,p'$ of multiplicity $2$, and the chain $\cC$ to a
 point $q$ of multiplicity $18$. See \cite[Proposition 11]{Mu-jems} for a discussion of contractions of chains of symplectic surfaces. 
 Note that $[2,\stackrel{(17)}{\ldots}, 2]=\frac{18}{17}$, so the point
 has local model $(z_1,z_2)\mapsto (\varepsilon z_1,\varepsilon^{-1}z_2)$, with $\varepsilon=e^{2\pi i/18}$.
 Denote $\bar X$ the resulting symplectic cyclic orbifold with singular set $P=\{p,p',q\}$ and contraction map $\pi: X \to \bar X$. It has
 $b_2(\bar X)=22-2-17=3$, and it is simply connected.

\begin{lemma} \label{lem:resolution-cyclic}
Consider the quotient singularity $Z=\CC^2/\ZZ_{q}$ given by the action $\e(x,y)=(\e x,\e^{-1}y)$, where $\e=e^{2 \pi \ii/q}$. 
It admits a resolution $\f: \tilde Z \to Z$ so that the exceptional divisor is formed by a chain of $q-1$ 
rational $(-2)$-curves, say 
$\cC=C_1 \cup C_2 \cup \dots \cup C_{q-1}$. The map 
\[
\f_*: \pi_1(\tilde Z - \cC) \to \pi_1(Z - \{0\}) \cong 
\ZZ_{q} \langle \g_0 \rangle
\]
maps the (positive) 
loop $\d_{C_j}$ around each of the curves $C_j$ to 
$\g_0^j$, where $\g_0$ is the loop generating the group $\pi_1((\CC^2 - \{0\}) /\ZZ_{q})$, and $1\leq j \leq q-1$.
\end{lemma}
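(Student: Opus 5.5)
We will use the explicit toric description of the $A_{q-1}$ resolution. Write $Z=\CC^2/\ZZ_q$ as the affine toric variety of the cone $\sigma=\langle e_2,\, q e_1 - (q-1)e_2\rangle$ (equivalently, $Z=\{uv=w^q\}$ with $u=x^q$, $v=y^q$, $w=xy$). The minimal resolution $\tilde Z$ is obtained by subdividing $\sigma$ with the rays $v_j=(j,1-j)$, $j=0,\dots,q$. Then $\tilde Z$ is covered by the charts $U_j\cong\CC^2$, $j=1,\dots,q$, with coordinates $(s_j,t_j)$. The curve $C_j$ is $\{t_j=0\}\subset U_j$ and $\{s_{j+1}=0\}\subset U_{j+1}$, it has self-intersection $-2$, and consecutive curves meet transversally. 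The resolution map $\f$ can be written on $U_j$ in terms of the orbifold coordinates $(x,y)$ by
\[
x=s_j^{1/q}\,(\cdots),\qquad y=\cdots .
\]
More usefully, we record the monomial formulas
\[
u=s_j^{\,j}\,t_j^{\,j-1}\ \text{(up to relabeling)},\qquad w=s_jt_j,\qquad v=s_j^{\,q-j}\,t_j^{\,q-j+1}.
\]
This is the standard computation, and we will check that these formulas are consistent with $uv=w^q$.

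The key steps are as follows.

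\emph{Step 1.} Identify $\pi_1(Z-\{0\})=\ZZ_q$ concretely. The covering $\CC^2-\{0\}\to Z-\{0\}$ is the universal cover, with deck group $\ZZ_q$. We take $\g_0$ to be the class of the loop $\theta\mapsto[(e^{2\pi\ii\theta/q}x_0,\,e^{-2\pi\ii\theta/q}y_0)]$, $\theta\in[0,1]$, which is closed in the quotient and lifts to a path from $(x_0,y_0)$ to $\e\cdot(x_0,y_0)$. A loop in $Z-\{0\}$ then maps to $\g_0^a$ exactly when its lift ends at $\e^a\cdot$(start). We detect $a$ through the function $x$: along a closed loop downstairs, the change of $\arg x$ in the lift equals $2\pi a/q$ modulo $2\pi$. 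Since $x^q=u$ is a genuine function on $Z$, it is cleaner to say the following. The loop maps to $\g_0^a$ iff the lift of $u$ to a $q$-th root, namely $x$, gets multiplied by $\e^a$. Equivalently, since $w=xy$ is invariant, we read off the change of $\arg x$.

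\emph{Step 2.} Compute the meridian of $C_j$. Take the small loop $\d_{C_j}$ around a generic point of $C_j$ in the chart $U_j$: fix $s_j=c\neq0$ and let $t_j=\eta e^{2\pi\ii\theta}$. Away from $\cC$, the map $\f$ is an isomorphism onto $Z-\{0\}$, so $\d_{C_j}$ is a loop there. Along it, $u=s_j^{\,j}t_j^{\,j-1}$ has winding number $j-1$ (or $j$, depending on the chart convention), and $w$ has winding number $1$. The monodromy of a continuous $q$-th root $x$ of $u$ (compatible with $y=w/x$) is $\e^{\text{wind}(u)}$. We will fix conventions so that the winding is $j$, for instance by using the chart in which $C_j$ appears as $\{s_{j+1}=0\}$, or by using $v$ and $y$ instead. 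This gives $\f_*(\d_{C_j})=\g_0^{\pm j}$. Using the orientation fixed by $\g_0$ and the complex orientation of the meridian, the sign comes out positive, so $\f_*\d_{C_j}=\g_0^j$.

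\emph{Step 3.} Check consistency. In $\pi_1(\tilde Z-\cC)$ we have the relation that a loop around $C_j$ and one around $C_{j+1}$, taken near their intersection point, generate an abelian group. The loop $\{|s_j|=|t_j|=\eta\}$ is homotopic to the product of the two meridians. Moreover, the boundary lens space $L(q,q-1)$ has $\pi_1=\ZZ_q$, generated by the meridian of $C_1$. This confirms that $\d_{C_1}\mapsto\g_0^{\pm1}$ is a generator and that the indices grow linearly along the chain. This matches the known relation $\d_{C_{j-1}}+\d_{C_{j+1}}=2\d_{C_j}$ coming from $C_j^2=-2$.

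The main obstacle is bookkeeping: choosing charts and orientation conventions so that the answer is $\g_0^{j}$ rather than $\g_0^{-j}$ or $\g_0^{j-1}$. I would settle this by first verifying the case $j=1$ directly, where $C_1$ is the end of the chain and its meridian is the standard generator. I would then propagate along the chain using the $-2$ relation $\d_{C_{j+1}}=2\d_{C_j}-\d_{C_{j-1}}$, with $\d_{C_0}=1$. This yields $\d_{C_j}=j\,\d_{C_1}$ by induction.
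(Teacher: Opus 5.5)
Your strategy can be made to work, and it partly differs from the paper's route. However, Step~2 as written contains a concrete error, and the proposed repair (``fix conventions'') is not legitimate. Once $\g_0$ is fixed (lift ending at $\e\cdot(x_0,y_0)$) and meridians carry the complex orientation, the exponent is forced. The only freedom is which end of the chain is called $C_1$, and reversing it turns $\g_0^j$ into $\g_0^{-j}$.

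The error is in locating $C_j$ in your charts. From your (correct) monomials, $s_j=u/w^{j-1}=x^{q-j+1}y^{1-j}$ and $t_j=x^{j-q}y^{j}$, so on overlaps $s_{j+1}=t_j^{-1}$ and $t_{j+1}=s_jt_j^2$. Hence $\{t_j=0\}\subset U_j$ and $\{s_{j+1}=0\}\subset U_{j+1}$ are not the same curve. The exceptional curves are $C_j=\{s_j=0\}\subset U_j$, glued to $\{t_{j+1}=0\}\subset U_{j+1}$, for $1\le j\le q-1$; the transition $t_{j+1}=s_jt_j^2$ shows the normal degree is $-2$.
\begin{itemize}
\item Your curve $\{t_j=0\}\subset U_j$ is actually $C_{j-1}$, which is why you found winding $j-1$.
\item For $j=1$, $\{t_1=0\}\subset U_1$ is the strict transform of the image of $\{y=0\}$, not exceptional at all.
\item Your patch via $\{s_{j+1}=0\}$ would give winding $j+1$, and switching to $v,y$ does not change the class.
\end{itemize}
With the correct identification, take the loop $s_j=\eta e^{2\pi\ii\theta}$, $t_j=c$. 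Then $u$ winds $j$ times and $v$ winds $q-j$ times, so the lift runs from $(x,y)$ to $(\e^jx,\e^{-j}y)=\e^j\cdot(x,y)$. This gives $\f_*\d_{C_j}=\g_0^j$ directly, with no adjustment and no sign ambiguity.

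Your Step~3 is a sound alternative. Since $\f$ restricts to an isomorphism $\tilde Z-\cC\to Z-\{0\}$, the group is $\ZZ_q$, hence abelian. The relation $\d_{C_{j-1}}+\d_{C_{j+1}}=2\d_{C_j}$ (additively, with $\d_{C_0}=\d_{C_q}=0$) is the standard presentation of $H_1$ of the link by the $A_{q-1}$ intersection matrix. This is the same mechanism as Lemma~\ref{lem:trivialization} applied to the spheres $C_j$. Note, though, that knowing $\d_{C_1}$ generates only gives $\d_{C_1}\mapsto\g_0^a$ with $\gcd(a,q)=1$. The base case must be an honest computation on the correct end curve $\{s_1=0\}\subset U_1$, where $u=s_1$ winds once.

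Compared with the paper: the paper blows up $uw=v^q$ at the origin repeatedly. Each blow-up produces the two outer curves $C_j,C_{q-j}$ and a residual singularity $ac=v^{q-2}$. The paper tracks each meridian through composite blow-down maps and treats the middle curve separately when $q$ is even. Your toric charts give closed formulas for all curves at once, so, once corrected, each meridian is read off in one line with no recursion or parity split. Your Step~3 variant needs only one local computation and explains topologically why the exponent grows linearly in $j$.
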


\begin{proof}
We take the model of the singularity $\CC^2/\ZZ_{q}$ with action $\e(x,y)=(\e x, \e^{-1} y)$, 
$\e=e^{2 \pi \ii /q}$, given by 
$u=x^q$, $w=y^q$, $v=xy$, which is
\[
Z=\{(u,v,w)\ | \ uw=v^{q}\} \subset \CC^3 \, .
\]
We blow up at $(0,0,0)$ to get 
\[
 \tilde Z^1 =\left\{ (a\!:\!b\!:\!c) \times (u,v,w)\, |\, 
u=\l a, v=\l b, w=\l c, ac=\l^{q-2} b^{q} \right\}
\]
The exceptional divisors are
$$
 C=\{\l=0, c=0\}=\{(a\!:\!b\!:\!0) \times 0\}, \quad
   C'=\{\l=0, a=0\}=\{(0\!:\!b\!:\!c)\times 0\}.
$$
These intersect at the point $p=(0\!:\!1\!:\!0) \times 0$. Clearly $C$ and $C'$ are smooth except at $p$; let us see that in fact $p$ is the only singular point of $\tilde Z^1$.

There are three affine charts, the first one $Y$ covers 
$C-\{p\}$ and it is given by setting $a=1$. The coordinates
are then $(u, b,c)$, with $\lambda =u$ 
and the equations of $\tilde Z^1$ are 
$c=u^{q-2}b^q$, which is a smooth surface with
coordinates $(u,b)$.
The curve $C\cap Y$ is defined by $u=0$, 
so a loop around $C$ in this chart is given 
$\d(t)= (u(t), b(t))=(e^{2 \pi \ii t}, 1)$.
Let $\b: \tilde Z^1 \to Z$ be the blow-down map,
$\b(u,b)=(u,v,w)=(u,u b, uc)=(u,ub,u^{q-1}b^q)$. 
The loop $\d$ is mapped to
$$
\b_* \d(t)=(u(t),v(t),w(t))=
(e^{2 \pi \ii t}, 
e^{2 \pi \ii t},  
e^{2 \pi \ii (q-1)t}, ),
$$
which is the generator $\g_0$ of the fundamental group of the link of the singularity, as can 
be noticed since $x^q=u=e^{2\pi\ii t}$ joins $x$ with
$\e x$.

The second affine chart $Y'$ is given by $c=1$, and covers
$C'-\{p\}$. 
The coordinates are $(a,b,w)$, where $\lambda=w$,
and the equations for $\tilde Z^1$ are $a=w^{q-2}b^{q}$, 
which is a smooth surface with coordinates $(b,w)$. The
curve $C'\cap Y'$ is defined by $w=0$,
so a loop around $C'$ given by 
$\d'(t)= (b(t), w(t))=(1,e^{2 \pi \ii t})$
is mapped to
\[
\b_* \d'(t)=(u(t),v(t),w(t))=
(e^{2 \pi \ii (q-1)t}, 
e^{2 \pi \ii t},  
e^{2 \pi \ii t}),
\]
which is the loop $\g_0^{q-1}$. 

Finally, the third affine chart $Y''$ is defined by $b=1$ and
covers the point $p$. 
The coordinates are $(a,v,c)$, where $\lambda=v$,
and the equations for $\tilde Z$ are $ac=v^{q-2}$. This is
a singular surface at the point $p$, completely analogous
to the original $Z$ but with $q-2$ instead of $q$. 
This means that we can repeat the procedure recursively.
Blowing-up at each stage, we introduce two exceptional 
divisors and reduce $q$ in two units. If we get to $q=1$, then
there are no more exceptional divisors, and if we get to
$q=2$, the surface is $uw=v^2$, which is resolved via a single
blow-up obtaining one exceptional divisor.
At the end of the day we get a chain
of $q-1$ rational curves $C_1\cup C_2\cup\ldots \cup C_{q-1}$.
The first blow-up creates $C_1$ and $C_{q-1}$. The second blow-up creates $C_2,C_{q-2}$, and so on.

In coordinates in the third affine chart (the one covering the singularity of $\tilde Z^1$),
the blow-down map is $\beta(a,v,c)=(av,v,cv)$. 
If we denote $\beta_j: \tilde Z^j\to Z$ the
blow-down map from the $j$-th blow-up, it is given
in these affine charts as $\beta_j(a_j,v,c_j)=
(a_jv^j,v,c_jv^j)$. Now consider a loop $\d_j$ 
around $C_j$, $1\leq j <\frac12(q-1)$. 
Then $\beta_{j,j-1}:\tilde Z^j\to\tilde Z^{j-1}$ maps $\delta_j$ to 
the loop 
$$
(\beta_{j,j-1})_*\delta_j(t)=(e^{2 \pi \ii t}, 
e^{2 \pi \ii t},  
e^{2 \pi \ii (q-(2j-2)-1)t} ),
$$
as we computed before. Hence
$$
(\beta_j)_*\delta_j(t)=
(\beta_{j-1})_*(\beta_{j,j-1})_*
\delta_j(t)=
(\beta_{j-1})_*
(e^{2 \pi \ii t}, 
e^{2 \pi \ii t},  
e^{2 \pi \ii (q-2j+1)t})
=
(e^{2 \pi \ii jt}, 
e^{2 \pi \ii t},  
e^{2 \pi \ii (q-j)t}),.
$$
This is the loop $\g_0^j$. We can do 
a similar computation for $\d_j$, $\frac12(q-1)<j
\leq q-1$.

In the case of $q=2d$. There is a final
blow-down map $\beta_{d,d-1}:\tilde Z^d \to \tilde Z^{d-1}$, where $\tilde Z^{d-1}$ has equation
$uw=v^2$. The extra divisor is $C_d=C_{q/2}$, which
is the central divisor in the chain. The image
of the loop $\delta_{d}(t)$ around $C_d$ is the
generator of $\pi_1(\tilde Z^{d-1}-\{0\})=\ZZ_2$,
which has a representative given as
$(\b_{d,d-1})_* \d_d(t)=(e^{2\pi \ii t},e^{2\pi \ii t},e^{2\pi \ii t})$.
Now we compute as above: \[
(\b_d)_* \d_d(t)=(\b_{d-1})_* (\b_{d,d-1})_* \d_d(t)=(e^{2 \pi \ii dt}, e^{2 \pi \ii t}, e^{2 \pi \ii dt})
\]
concluding the proof.
\end{proof}

To proceed, it is convenient to recall a couple of results from \cite{MR-RSME} about construction of symplectic surfaces realizing a given homology class. We start with the concept of nice intersections, that shall be used often in the sequel.

\begin{definition} \label{def:nice}
Let $Z$ be a symplectic cyclic $4$-orbifold. We say that a finite family $S_i$ 
of symplectic surfaces in $Z$ is nice, or that the $S_i$ intersect nicely, if they intersect transversely and positively, no three of them intersect at the same point, and in addition around each point of intersection of $S_i \cap S_j$ there exists a Darboux orbifold chart $(\CC^2,\G,\CC^2/\G, \o_0)$ with coordinates $(z,w)\in \CC^2$, $\G < \ZZ_n$ a cyclic group acting by rotations as $\g(z,w)=(\xi^a_n z, \xi_n^b w)$, $\xi_n=e^{2 \pi \ii/n}$, so that $S_i=\{w=0\}$, $S_j=\{z=0\}$.
\end{definition}

\begin{remark}
A couple of observations on the previous definition:
    \begin{itemize}
        \item If there is only one surface $S_1$, we say that $S_1$ is nice if $S_1=\{w=0\}$ in a Darboux chart as before.
        \item The case where $\G$ is trivial corresponds to a smooth point of the orbifold $Z$ lying in the intersection $S_i \cap S_j$. In this case, one can alternatively take a Darboux chart so that $S_i=\{z+w=0\}$, $S_j=\{z-w=0\}$ via a linear change of coordinates in $\mathrm{SU}(2)$.
        \item The case where $\G$ is non-trivial corresponds to a possibly singular point in the orbifold $Z$, lying in the intersection of two isotropy surfaces $S_i, S_j$.
        \item The surfaces in the codimension-2 isotropy strata of a symplectic cyclic $4$-orbifold are automatically a nice family. This is because the local groups can be assumed to act by rotations, hence the eigenspaces of the action are orthogonal.
    \end{itemize}
\end{remark}

By \cite[Lemma 6]{MRT} and \cite[Proposition 3.3]{MR-RSME}, any family of symplectic surfaces intersecting transversally and positively pairwise can be slightly perturbed so as to intersect nicely. We need a couple of results from \cite{MR-RSME} about the issue of realizing a given homology class by a symplectic surface.

\begin{proposition}\cite[Proposition 5]{MR-RSME} \label{prop:Cn}
    Let $X$ be a symplectic $4$-manifold and $C \subset X$ a symplectic surface with $C^2 > 0$.
    Then for any positive integer $n$ there exists a symplectic surface $C_n$ whose homology class is $[C_n]=n[C]$. The construction can be done inside an arbitrary tubular
    neighborhood of $C$.
    Moreover, for $N\geq 1$, 
    the family of surfaces $C_n$, $1\leq n \leq N$, can be constructed so that
    they intersect nicely.
\end{proposition}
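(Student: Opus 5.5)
The plan is to work entirely inside a tubular neighborhood $\nu(C)$ of $C$. By the symplectic neighborhood theorem, $\nu(C)$ is symplectomorphic to a disc bundle of the complex line bundle $L\to C$ of degree $d=C^2>0$, with $C$ the zero section. Choose a complex structure on $C$ making $L$ holomorphic; since $d>0$, the power $L^{n}$ has many holomorphic sections once $n$ is large. Rather than working with sections of $L^n$, we use \emph{multisections} of $L$: take $n$ smooth sections $s_1,\dots,s_n$ of $L$ that are $C^1$-small and in general position, so that $s_i$ and $s_j$ intersect transversely. Each graph $\Gamma_{s_i}$ is symplectic, being $C^1$-close to the zero section, and represents $[C]$. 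Two such graphs meet in $d$ points, counted with sign.

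First, choose the sections so that every intersection is positive. One way is to take $s_i=\epsilon_i s$, where $s$ is a holomorphic (or generic smooth) section with $d$ transverse zeros, all positive. When $d>0$ a sufficiently ample holomorphic structure makes this possible; alternatively one uses $\epsilon_i s+\eta_i$ with small generic perturbations. Then resolve the positive transverse double points of $\Gamma_{s_1}\cup\dots\cup\Gamma_{s_n}$ symplectically, replacing each $\{zw=0\}$ locally by $\{zw=\delta\}$ in a Darboux chart. This yields a connected embedded symplectic surface $C_n$ in $\nu(C)$ with $[C_n]=n[C]$. Positivity of the intersections is exactly what makes the smoothing symplectic, and it is here that $C^2>0$ is used.

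For the family statement, the surfaces $C_n$, $1\le n\le N$, are built simultaneously. Choose all the sections $s^{(n)}_i$ (for $1\le i\le n\le N$) generically in a single step, so that any two of them meet transversely and positively and no three pass through a common point. Then smooth only the intersections among sections belonging to the same $C_n$. The points where $C_n$ meets $C_m$ with $n\ne m$ are untouched double points of distinct graphs, so they are transverse and positive, and no triple points occur. Finally, apply \cite[Lemma 6]{MRT} and \cite[Proposition 3.3]{MR-RSME} to perturb the family to a nice one in the sense of Definition \ref{def:nice}.

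The main obstacle is the first step: ensuring that all intersections between the graphs are positive and transverse while keeping them in general position. A generic smooth perturbation can create cancelling pairs of negative points. I would address this by working holomorphically: choose the complex structure on $C$ and a Hermitian connection so that $\nu(C)$ is Kähler near the zero section and the symplectic form is compatible. Distinct holomorphic sections $\epsilon_i s$ then meet only along the zeros of $s$, and to avoid triple points I would take the $s_i$ to be generic sections of $L$ when $d\ge 2g-1$ (which gives enough holomorphic sections). When $d$ is small, I would instead use a branched-cover or multisection argument, realizing $C_n$ as the zero set of a holomorphic section of $\pi^*L^{n}$ restricted suitably, in the spirit of Donaldson's approximately holomorphic sections.
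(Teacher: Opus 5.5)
Your outline is the construction the paper recalls in the proof of Theorem~\ref{th:meridians-commute}. You identify a neighbourhood of $C$ with one of the zero section of a holomorphic line bundle of degree $d=C^2$, take $\binom{N+1}{2}$ small multiples of a section $s$ with $d$ simple zeros (e.g.\ the canonical section of $\cO_C(p_1+\dots+p_d)$), and separate the multiple points so only two graphs meet at a time. You then group the graphs as $1+2+\dots+N$ and resolve only inside each group.

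The gap is in the step you call the main obstacle. You assert that small generic perturbations of the $\epsilon_i s$ may create cancelling negative pairs, and so you drop your own alternative $\epsilon_i s+\eta_i$. You replace it by generic holomorphic sections when $d\ge 2g-1$, plus an unspecified Donaldson-type or branched-cover argument for smaller $d$. The latter is not a proof. The former is not quite right either: for $g=1$, $d=1$ one has $h^0(L)=1$, so all holomorphic sections are proportional and the $n$-fold point cannot be split holomorphically. More importantly, the paper applies the proposition exactly in the range you leave uncovered: $T_1$ and $A$ have genus $10$ and square $18=2g-2<2g-1$. As written, the proposal does not establish the statement in the cases that are needed.

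The missing observation is elementary and makes the holomorphic detour unnecessary. Since $\epsilon_i s-\epsilon_j s=(\epsilon_i-\epsilon_j)s$ has only simple positive zeros, the graphs $\G_i,\G_j$ already meet transversely and positively at the $d$ points $(p_k,0)$. Away from small discs around the $p_k$, $|(\epsilon_i-\epsilon_j)s|$ is bounded below. Transverse positive intersection is an open condition. Hence for $s_j=\epsilon_j s+\eta_j$ with $\eta_j$ small in $C^1$ compared with these margins, three things hold: each pair of graphs still meets in exactly one transverse positive point near each $p_k$, it meets nowhere else, and all graphs stay symplectic.

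To remove triple points, take $\eta_j$ to be a small constant $c_{jk}$ in a trivialization near $p_k$, cut off in an annulus where $|s|$ is bounded below. In the local coordinate $t=s(x)$ the graphs become the lines $y=\epsilon_j t+c_{jk}$. A triple point means three of these lines are concurrent, which generic $c_{jk}$ avoid. This is what \cite[Proposition 2]{MR-RSME} supplies in the paper.

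One ordering issue remains. Replacing $\{zw=0\}$ by $\{zw=\delta\}$ needs a Darboux chart in which the two surfaces are the coordinate axes. So the intersections must be made nice before the resolution, as the paper does before invoking \cite[Lemma 3.5]{MR-RSME}, and not only at the end.
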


\begin{theorem} \cite[Theorem 4]{MR-RSME} \label{thm:divisors}
Let $X$ be a symplectic manifold, and consider $C_i \subset X$ symplectic surfaces intersecting transversely and positively, and a
homology class 
$[D]=\sum_i a_i [C_i]$ with $a_i>0$ integers, and such that $[D] \cdot [C_j] \ge 0$, for all $j$. 
Then, inside a  
tubular neighborhood of $\bigcup_i C_i$, there exists a symplectic surface $\S$ representing the class $[D]$, and intersecting nicely with all $C_j$. 
\end{theorem}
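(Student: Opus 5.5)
The plan is to build $\S$ by a local-to-global smoothing of the singular configuration $\sum_i a_i C_i$. First, by \cite[Lemma 6]{MRT} and \cite[Proposition 3.3]{MR-RSME}, we perturb the $C_i$ so that they intersect nicely. Around each intersection point $p\in C_i\cap C_j$ this gives a Darboux chart with $C_i=\{w=0\}$ and $C_j=\{z=0\}$. Fix also a symplectic tubular neighbourhood $\nu(C_j)\cong N_j$ of each $C_j$, where $N_j$ is the normal bundle with $\deg N_j=C_j^2$. We choose these charts compatibly with the Darboux charts at the intersection points.

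\emph{Local pieces.} In each chart around $p\in C_i\cap C_j$ we replace $a_iC_i+a_jC_j$ by the holomorphic, hence symplectic, curve
\[
\{z^{a_j}w^{a_i}=\eps\}, \qquad 0<\eps \ll 1 .
\]
Away from a small ball around $p$ this curve splits into sheets near $C_i$ and sheets near $C_j$. Viewed in $\nu(C_j)$ with $w$ as coordinate on $C_j$, the sheets near $C_j$ are $z=(\eps w^{-a_i})^{1/a_j}$. This is an $a_j$-valued section of $N_j$ that behaves like the $a_j$-th root of a section of $N_j^{\otimes a_j}$ with a pole of order $a_i$ at $p$.

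\emph{Global pieces.} Over $C_j$ minus small discs around its intersection points, we need an $a_j$-valued section of $N_j$ matching these ends. Equivalently, we need a section $s_j$ of the line bundle
\[
L_j=N_j^{\otimes a_j}\otimes\cO\Big(\sum_{i\neq j}\sum_{p\in C_i\cap C_j} a_i\, p\Big),
\]
equal to the prescribed model near each $p$, and we take $\S\cap\nu(C_j)=\{v\in N_j : v^{a_j}=\eps\, s_j\}$. Its degree is
\[
\deg L_j=a_jC_j^2+\sum_{i\ne j}a_i\,C_i\cdot C_j=[D]\cdot[C_j]\ge 0 .
\]
This is where the hypothesis is used: we can choose a smooth section $s_j$ with exactly $[D]\cdot[C_j]$ zeros, all transverse and positive. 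We make $s_j$ complex-linear in a local holomorphic model near each zero. Near a zero the root locus is then locally $\{v^{a_j}=\eps u\}$, which is a holomorphic, hence symplectic, branched multisection. Elsewhere it is an unbranched $a_j$-sheeted cover $C^1$-close to $C_j$, since the sheets have size $O(\eps^{1/a_j})$ and hence are symplectic for $\eps$ small.

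Gluing the pieces in the transition annuli gives a closed embedded surface $\S$ inside the tubular neighbourhood of $\bigcup_i C_i$. It is homologous to $\sum_i a_iC_i$, because it is the smoothing of that cycle. Its intersection with $C_j$ consists exactly of the zeros of $s_j$, which are transverse and positive. Applying \cite[Proposition 3.3]{MR-RSME} once more, with a perturbation supported near these points, makes the family $\S, C_1, C_2,\dots$ nice.

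I expect the main obstacle to be the interpolation in the annuli around the intersection points. There the local model $z^{a_j}w^{a_i}=\eps$ and the root-section description must be matched while keeping $\o|_\S>0$. I would handle this by working in the Darboux chart, where both descriptions are graphs of the same multivalued function up to a cut-off. The cut-off introduces an error whose derivative is $O(\eps^{1/a_j})$ relative to a holomorphic graph, so symplecticity survives for $\eps$ small. The same scaling argument is needed to control symplecticity near the branch points of the multisections.
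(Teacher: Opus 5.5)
Your argument is correct. Note that the paper gives no proof of this statement: it is quoted from \cite{MR-RSME}, so there is no internal argument to match step by step. The only trace of the method there is the description, in the proof of Theorem \ref{th:meridians-commute}, of how Proposition \ref{prop:Cn} is obtained: a K\"ahler line-bundle model of the neighbourhood, graphs of $C^0$-small sections, perturbation to nice position, and smoothing of each nice node $\{zw=0\}$ into $\{zw=\eps\}$.

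Your route differs from that toolkit. Instead of producing reduced sheets and smoothing ordinary nodes, you absorb the multiplicities in one step, using the non-reduced model $z^{a_j}w^{a_i}=\eps$ at $C_i\cap C_j$ and the branched $a_j$-valued section $v^{a_j}=\eps s_j$ along $C_j$. This buys three things.
\begin{itemize}
\item It never needs push-offs of the $C_j$. Such push-offs cannot exist as positively intersecting symplectic surfaces when $C_j^2<0$, which is the situation of the $(-2)$-curves in Propositions \ref{prop:theTn} and \ref{prop:theA}.
\item The hypothesis $[D]\cdot[C_j]\ge 0$ enters exactly once and transparently, as $\deg L_j$, which equals the number of branch points and also $\#(\S\cap C_j)$.
\item Riemann--Hurwitz gives the topology of $\S$ directly. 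For $2T+3D$ the local pieces near $T\cap D$ are annuli, so $\chi(\S)=\bigl(2\cdot(-3)-9\bigr)+3\cdot(-1)=-18$. This is genus $10$, in agreement with the adjunction count in Proposition \ref{prop:theTn}.
\end{itemize}

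Two points to tighten.

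First, the ``main obstacle'' you flag is not really there. The section $s_j$ is fixed, independent of $\eps$, and already equal to the model near each $p$, so no $\eps$-dependent cut-off ever occurs. What you do need is a relative tubular neighbourhood statement: the identification $\nu(C_j)\cong N_j$ must agree with the chart projection $(z,w)\mapsto w$ (linear in $z$) near each $p\in C_j$. Near each zero of $s_j$ it must agree with a Darboux chart in which $C_j$ is a complex coordinate line. Once this holds, every branch of $(\eps s_j)^{1/a_j}$ has $C^1$-norm $O(\eps^{1/a_j})$ on the complement of fixed small discs around the zeros and the points $p$.

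Second, replace ``it is the smoothing of that cycle'' by an actual argument. $H_2$ of a regular neighbourhood of $\bigcup_i C_i$ is $\bigoplus_i\ZZ[C_i]$, since the $C_i$ meet only in points. Moreover $\S$ meets a normal disc of $C_j$ in exactly $a_j$ positive points. Hence $[\S]=\sum_i a_i[C_i]$.
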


We can use these results to reprove and improve
some results of \cite{Mu-jems}.

\begin{proposition}\label{prop:theTn}
  There is a collection of smooth symplectic surfaces $T_n$, $n\geq 1$, in a neigbourhood of $T\cup D$,  of genus $g_n=9n^2+1$,
  not intersecting $D$, and such that $[T_n]=n[T_1]$ and all the $T_n$ 
  intersect pairwise nicely.
  \end{proposition}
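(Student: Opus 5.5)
We need $T_1$ first. Since $D^2=-2$, $D\cdot T=3$ and $T^2=0$, the natural candidate is the class $[T_1]=[T]+[D]$ if it is disjoint from $D$. But $(T+D)\cdot D = 3-2=1\neq 0$, so instead take $[T_1]=2[T]+3[D]$. Then
\[
(2T+3D)\cdot D = 6-6 = 0,
\]
\[
(2T+3D)\cdot T = 9 > 0,
\]
\[
(2T+3D)^2 = 0 + 36 - 18 = 18 .
\]

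We apply Theorem \ref{thm:divisors} to the symplectic surfaces $T,D$, which intersect transversely and positively after the perturbation of \cite[Lemma 27]{MRT} and \cite[Lemma 6]{MRT}. The coefficients $a_i=2,3$ are positive, and the intersections with $T$ and with $D$ are $\ge 0$. So there is a symplectic surface $T_1$ in a tubular neighbourhood of $T\cup D$ representing $2[T]+3[D]$ and intersecting $D$ nicely. Since $[T_1]\cdot[D]=0$ and all intersections are positive and transverse, $T_1\cap D=\emptyset$.

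The genus of $T_1$ follows from adjunction in the neighbourhood. We have $K\cdot T=0$ since $T$ is a torus with $T^2=0$, and $K\cdot D=0$ since $D$ is a sphere with $D^2=-2$. Hence $K\cdot T_1=0$ and $2g-2=T_1^2=18$, giving $g_1=10=9\cdot1^2+1$. In general $[T_n]=n[T_1]$ has square $18n^2$ and $K\cdot T_n=0$, so $2g_n-2=18n^2$ and $g_n=9n^2+1$, as claimed.

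Next we apply Proposition \ref{prop:Cn} to $C=T_1$, which has $T_1^2=18>0$. This gives surfaces $T_n$ with $[T_n]=n[T_1]$ for $1\le n\le N$, all intersecting nicely and lying in an arbitrarily small tubular neighbourhood of $T_1$. Because $T_1$ is compact and disjoint from $D$, we can choose that neighbourhood disjoint from $D$. Then every $T_n$ avoids $D$ and lies inside a neighbourhood of $T\cup D$.

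The main obstacle is the claim for all $n\ge 1$ simultaneously, since Proposition \ref{prop:Cn} gives nice families only for $1\le n\le N$. One option is to read the statement as holding for arbitrary finite $N$, which is what is used later. If an infinite family is genuinely required, build it inductively: at each step perturb the new $T_n$ inside the fixed neighbourhood so that it is transverse to the finitely many earlier ones, and make the intersections nice using \cite[Proposition 3.3]{MR-RSME}. A secondary point is to check that the $T_n$ are connected and smooth. Connectedness follows because $n[T_1]$ has positive square and the construction of Proposition \ref{prop:Cn} yields a connected surface. Without connectedness the genus formula would fail.
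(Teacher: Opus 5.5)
Your proposal is correct and follows the paper's proof: you realize $[T_1]=2[T]+3[D]$ with Theorem \ref{thm:divisors}, take the multiples $T_n$ from Proposition \ref{prop:Cn}, and get the genus from adjunction using $K\cdot T=K\cdot D=0$. Your remarks on disjointness from $D$, on choosing the neighbourhood of $T_1$ away from $D$, and on connectedness fill in details the paper leaves implicit (connectedness comes from the construction itself, not from the square being positive). Like the paper, you really obtain the family for $1\le n\le N$ with $N$ arbitrary, which is all that is used later; your inductive fallback for an infinite family is not justified as written, because a generic perturbation gives transversality but not positivity of the intersections, and niceness requires positivity.
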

  
  \begin{proof}
  This is proved in \cite{Mu-jems}, but we  
  give an alternative proof using the techniques from \cite{MR-RSME}.
Let $K$ be the canonical class of the symplectic form. 
Note that $K\cdot T=0$, $K\cdot D=0$ by adjunction.
We start constructing a curve $T_1$ with 
$$
[T_1] = [2T+3D].
$$ 
This is done using Theorem \ref{thm:divisors}.  
Now, using Proposition \ref{prop:Cn}, we obtain $T_n$ with $[T_n]=n [T_1$, and all intersect nicely. Then
$T_n^2=18n^2$ and the genus $g_n=9n^2+1$ satisfies $2g_n-2=18n^2$, since $K\cdot T_n=0$. 
 \end{proof}
  
 \begin{proposition}\label{prop:theA}
 Let $F$ be a fiber of the fibration that intersects the chain $\cC$ transversally at a point of $E_1$. Consider
the configuration of symplectic surfaces $\cC\cup F$. Then there 
is a symplectic surface $A$ of genus $g_A=10$, in a neighbourhood of $\cC \cup F$, not intersecting the chain. 
Also, there is a collection of smooth symplectic surfaces $A_k$, $k\geq 1$, in a neighbourhood of $A$, representing the class $k[A]$ and of genus $g_k=9k^2+1$,
not intersecting the chain $\cC$, and such that all the $A_k$ 
intersect pairwise nicely.
\end{proposition}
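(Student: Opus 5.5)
Following the proof of Proposition \ref{prop:theTn}, I will first produce $A$ with Theorem \ref{thm:divisors} and then take multiples of it with Proposition \ref{prop:Cn}. The configuration $\cC\cup F$ consists of the chain of $17$ symplectic $(-2)$-spheres $C_8,\dots,C_1,E_1,C_1',\dots,C_8'$, with $F$ a symplectic torus of square $0$ meeting the central sphere $E_1$ transversely once. After a small perturbation (\cite[Lemma 6]{MRT}, \cite[Proposition 3.3]{MR-RSME}) these intersections are nice.

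I will look for a class $[A]=a F+\sum_j c_j [C_j]$, where the sum runs over the $17$ curves of the chain and all coefficients are positive integers. I need $[A]\cdot [C_j]=0$ for every curve $C_j$ of the chain. Since the curves of $\cC$ are exceptional in the contraction to $q$, this is what lets $A$ be pushed off the chain and descend to $\bar X$ away from the singular point.

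Write $e_0$ for the central sphere $E_1$ and index the chain symmetrically as $e_{-8},\dots,e_8$, with coefficients $c_i$. The equations are
\[
-2c_i+c_{i-1}+c_{i+1}=0 \quad (i\neq 0), \qquad -2c_0+c_{-1}+c_1+a=0,
\]
with $c_{\pm 9}=0$. The solution is the tent function $c_i=9-|i|$, which gives $c_0=9$ and $a=2c_0-c_{-1}-c_1=18-16=2$. So I take
\[
[A]=2F+\sum_{i=-8}^{8}(9-|i|)\,[e_i].
\]
Then $[A]\cdot[e_i]=0$ for all $i$, and $[A]\cdot F=9$. Theorem \ref{thm:divisors} applies because all coefficients are positive and all intersections with the components are $\ge 0$. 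It gives a symplectic surface $A$ in a tubular neighbourhood of $\cC\cup F$ representing $[A]$ and intersecting each component nicely. Positive transverse intersections with $A\cdot C_j=0$ force $A\cap C_j=\emptyset$, so $A$ does not meet the chain.

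For the numerics, $A^2=[A]\cdot(2F)=2\cdot 9=18$. For the canonical class, $K\cdot F=0$ (torus of square $0$) and $K\cdot e_i=0$ ($(-2)$-spheres), so $K\cdot A=0$. Adjunction gives $2g_A-2=18$, hence $g_A=10$.

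Since $A^2=18>0$, Proposition \ref{prop:Cn} gives surfaces $A_k$ with $[A_k]=k[A]$, built inside an arbitrarily small tubular neighbourhood of $A$ and intersecting pairwise nicely. Taking that neighbourhood disjoint from $\cC$ (possible since $A$ is compact and disjoint from $\cC$) keeps every $A_k$ off the chain. Adjunction again gives $2g_k-2=18k^2$, so $g_k=9k^2+1$.

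The main obstacle is making sure the output of Theorem \ref{thm:divisors} really avoids the chain rather than only having zero algebraic intersection with it. This depends on the transversality and positivity of all intersections, which the theorem provides, and on the arithmetic giving exactly zero. A secondary check is that $F$ can be chosen to meet the chain only at a point of $E_1$: a generic smooth fiber near the $I_9$ fiber meets the section-glued sphere $E_1$ once and is disjoint from the $C_j$, $C_j'$, which lie in the singular fiber.
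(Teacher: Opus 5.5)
Your proposal is correct and follows the paper's proof: you apply Theorem \ref{thm:divisors} to $\cC\cup F$ with the same class $[A]=2F+9E_1+8(C_1+C_1')+\dots+2(C_7+C_7')+(C_8+C_8')$, and then obtain the $A_k$ from Proposition \ref{prop:Cn}. Your explicit checks that $[A]\cdot e_i=0$ along the chain, $A^2=18$ and $K\cdot A=0$ (giving $g_A=10$ and $g_k=9k^2+1$) supply exactly the verifications that the paper leaves implicit.
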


\begin{proof}
The construction of $A$  appears in \cite[Proposition 15]{Mu-jems} by a delicate and
very specific analysis of the configuration of curves of the chain. Now it can be done straightforwardly 
applying Theorem \ref{thm:divisors} to 
$\cC \cup F$. Take the homology class of $A$ given by
\[
[A]=[2F+9 E_1 + 8(C_1+C'_1) + 7(C_2+C'_2)+\dots + 2(C_7+C'_7)+ C_8+C'_8]
\]
This satisfies the properties of Theorem \ref{thm:divisors}.
Once $A$ is constructed, the $A_k$ are obtained by Proposition \ref{prop:Cn}.
\end{proof}

 Consider the collection of symplectic surfaces $T_n$, $1\leq n\leq N_1$, and
$T'_m$, $1\leq m\leq N_2$, and $A_k$, $1\leq k\leq K_0$, all of them inside $X$ and disjoint from the $(-2)$-spheres $D, D'$ and the chain $\cC$. The map 
$\pi: X \to \bar X$ contracting $D, D', \cC$ to the singularities $p,p',q$ does not change these surfaces, so we may see them inside $\bar X$ and denote by the same names $T_n=\pi(T_n)$, $T'_m=\pi(T'_m)$ and $A_k=\pi(A_k)$. 
None of the surfaces pass through singular points, and all their intersections are nice by construction. This allows to
assign isotropy coefficients to all of them and make $\bar X$ into a cyclic symplectic $4$-orbifold $X'$ with $T_n, T'_m, A_k$ as isotropy surfaces
(see \cite[Proposition 7]{MRT} for the
technicalities of this assertion).

We take the coefficients of the isotropy surfaces as follows. 
For each $1\leq n \leq N_1$, $1\leq m \leq N_2$,
$1\leq k\leq K_0$, take a prime $p_{nmk}$. The collection of chosen primes should be different, and they 
must satisfy $p_{nmk}> \max(3, n, m,k)$. We assign the following multiplicities:
 \begin{equation} \label{eq:orbifold-X'}
 \begin{aligned}
  m_{T_n} &= \prod_{m,k=1}^{N_2,K_0} p_{nmk} \, , \quad  1 \le n \le N_1\\
  m_{T_m'} &= \prod_{n,k=1}^{N_1,K_0} p_{nmk}^2\, , \quad 1 \le m \le N_2 \\ 
  m_{A_k} &= \prod_{n,m=1}^{N_1,N_2} p_{nmk}^3\, , \quad 1 \le k \le K_0 \, .
  \end{aligned}
 \end{equation} 
Call $X'$ the new orbifold thus constructed from $\bar X$.
Note that for $T_n,T_l$, $n\neq l$, which are intersecting surfaces, we have that
$\gcd(m_{T_n},m_{T_s})=1$. Analogously, 
$\gcd(m_{T_m'},m_{T_l'})=1$ for $m\neq l$, and also $\gcd(m_{A_k},m_{A_l})=1$, for $k\neq l$. 
On the other hand note that $p_{nmk}$ divides $m_{T_n}$, $m_{T_m'}$ and $m_{A_k}$, for
any $n,m,k$, thus
\[
\gcd(m_{T_n},m_{T_m'}, m_{T_{A_k}})\neq 1,
\]
which is in accordance with the fact that the involved surfaces are disjoint.

The next step is to construct a $5$-manifold $M$ as the total space of a Seifert circle bundle over
a cyclic orbifold $X$. For background on this
construction, see \cite{MRT, Mu}. These bundles are classified by a Chern class $c_1(M\to X')$, and
some local invariants associated to the isotropy locus and the singular points. The fact that none of the isotropy surfaces of $X'$ pass through a singular point ensures that we can construct a Seifert bundle $M \to X'$ with a suitable Chern class. The 
homology of $M$ is determined by the following result:

\begin{proposition}[{\cite[Theorem 36]{Mu}}] \label{thm:16MRT}
Suppose that $\pi:M\to X$ is a quasi-regular Seifert bundle with isotropy surfaces $D_i$ with multiplicities $m_i$,
and singular locus $P\subset X$. 
Let $\mu=\lcm (m_i)$. Then $H_1(M,\ZZ)=0$ if and only if
 \begin{enumerate}
 \item $H_1(X,\ZZ)=0$,
 \item $H^2(X,\ZZ)\to \mathop{\oplus}_i H^2(D_i,\ZZ_{m_i})$ is surjective,
 \item $c_1(M/\ZZ_\mu)\in H^2(X-P,\ZZ)$ is a primitive class.
 \end{enumerate}
 Moreover, $H_2(M,\ZZ)=\ZZ^k\oplus (\mathop{\oplus}_i \ZZ_{m_i}^{2g_i})$, $g_i=g(D_i)$ the genus of $D_i$, $k+1=b_2(X)$.
\end{proposition}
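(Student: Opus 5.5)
The approach is to compute the cohomology of $M$ through the Leray spectral sequence of the projection $\pi: M\to X$, viewed as a map to the underlying topological space of $X$. All fibers are circles, so $R^q\pi_*\ZZ=0$ for $q\ge 2$ and $R^0\pi_*\ZZ=\ZZ$. The interesting sheaf is $\cL=R^1\pi_*\ZZ$. Its stalk at every point is $\ZZ$, since $\pi^{-1}$ of a small orbifold chart retracts onto the (possibly exceptional) fiber. However, over a point of $D_i$ the restriction from the stalk to a nearby generic stalk is multiplication by $m_i$, because the generic fiber wraps $m_i$ times around the exceptional fiber. At a singular point of $P$ the analogous map is multiplication by the order of the local group. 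Assuming the isotropy surfaces avoid $P$, as in our construction, I would first establish the short exact sequence of sheaves
\[
0 \too \ZZ \too \cL \too \mathop{\oplus}_i \ZZ_{m_i}|_{D_i} \oplus \mathcal{T}_P \too 0,
\]
where $\mathcal{T}_P$ is a skyscraper sheaf supported on $P$. This gives a long exact sequence relating $H^*(X,\cL)$ to $H^*(X,\ZZ)$, $H^*(D_i,\ZZ_{m_i})$ and the skyscraper terms.

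Next I would run the spectral sequence. Its only possibly nonzero differential is $d_2: H^p(X,\ZZ)\to H^{p+2}(X,\cL)$, which is cup product with the orbifold Chern class of the Seifert bundle. Computing $H^1(M,\ZZ)$ and the torsion of $H^2(M,\ZZ)$ (equivalently $H_1(M,\ZZ)$, by universal coefficients) from the $E_2$-page, I expect the three conditions to appear as follows. Condition (1) kills $E_2^{1,0}=H^1(X)$ and the torsion coming from $H_1(X)$. Condition (2) is the surjectivity needed so that the connecting map $H^1(X,\mathop{\oplus}\ZZ_{m_i}|_{D_i})$-part does not leave residual torsion in $H^2(M)$; concretely, it makes $H^2(X,\ZZ)\to H^2(X,\cL)$ have cokernel controlled only by the $H^1(D_i,\ZZ_{m_i})$ terms. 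Condition (3) makes $d_2:H^0(X,\ZZ)=\ZZ\to H^2(X,\cL)$ have torsion-free cokernel; the right lattice for this is $H^2(X-P,\ZZ)$, where $c_1(M/\ZZ_\mu)$ lives, since quotienting by $\ZZ_\mu$ turns $M$ into an honest circle bundle away from $P$. For the converse direction I would argue in the same way: each failure of (1)–(3) produces explicit torsion or a free summand in $H^2(M)$.

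With $H_1(M)=0$ in hand, I would compute $H_2(M,\ZZ)\cong H^3(M,\ZZ)$ by Poincaré duality. The relevant terms are $E_\infty^{2,1}$, which is a quotient of $H^2(X,\cL)$, and $E_\infty^{3,0}=H^3(X,\ZZ)$, which is zero by (1) and duality. From the long exact sequence, $H^2(X,\cL)$ is an extension of $H^2(X,\ZZ)$ (modulo the images discussed above) by $\mathop{\oplus}_i H^1(D_i,\ZZ_{m_i})=\mathop{\oplus}_i\ZZ_{m_i}^{2g_i}$. Quotienting by $d_2$ of the generator (the Chern class) removes one free rank, which gives $\ZZ^{b_2(X)-1}$. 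To confirm that the extension splits and that $H^3(X,\cL)$ contributes nothing, I would use Poincaré duality on $M$ together with $H_1(M)=0$. Alternatively, I would use a Mayer–Vietoris decomposition of $M$ into $\pi^{-1}(X-\bigcup\nu(D_i)-\nu(P))$, the tubes $\pi^{-1}(\nu(D_i))$ (which are $S^1$-bundles over $D_i\times D^2/\ZZ_{m_i}$, so they carry $H_1(D_i)\otimes\ZZ_{m_i}$ as the new torsion), and the lens-space neighborhoods over $P$.

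The main obstacle is the careful bookkeeping at the singular points $P$, namely the skyscraper terms, and in particular showing that they produce no extra torsion exactly when $c_1(M/\ZZ_\mu)$ is primitive in $H^2(X-P,\ZZ)$ rather than in $H^2(X,\ZZ)$. I would handle this first by a local computation near each point of $P$: there the link is a lens space and $\pi^{-1}$ of the neighborhood is a solid torus–type quotient. I would then feed that local answer into the global Mayer–Vietoris sequence.
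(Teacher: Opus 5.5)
The paper gives no proof of this statement. It is imported from \cite{Mu}, a version of Koll\'ar's theorem that allows isolated singular points, so there is nothing to match step by step. Judged on its own, your Leray spectral sequence plan is the natural route, but it has a genuine gap: it is set up backwards at the two places that carry all the content.

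Your stalk computation is right: restriction from a stalk on $D_i$ to a nearby generic stalk is multiplication by $m_i$. But this shows that $\cL=R^1\pi_*\ZZ$ is a \emph{subsheaf} of the constant sheaf. In your setting ($D_i\cap P=\emptyset$) the correct sequence is
\[
0\too \cL\too \ZZ_X\too \bigoplus_i \ZZ_{m_i}|_{D_i}\oplus \mathcal{T}_P\too 0 .
\]
No map $\ZZ_X\to\cL$ that is generically an isomorphism exists, since along $D_i$ it would have to be multiplication by $1/m_i$. With your sequence, $\bigoplus_i H^1(D_i,\ZZ_{m_i})$ would sit in $H^1(X,\cL)=E_2^{1,1}$, i.e.\ in degree $2$ of $M$ rather than degree $3$. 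Only the correct sequence gives, using (1), the extension $0\to\bigoplus_i H^1(D_i,\ZZ_{m_i})\to H^2(X,\cL)\to \ker\bigl(H^2(X,\ZZ)\to\bigoplus_i H^2(D_i,\ZZ_{m_i})\bigr)\to 0$ that you then use.

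Likewise, $d_2$ goes $E_2^{p,1}=H^p(X,\cL)\to E_2^{p+2,0}=H^{p+2}(X,\ZZ)$, not the other way. So $E_\infty^{2,1}=\ker\bigl(H^2(X,\cL)\to H^4(X,\ZZ)\bigr)$ is a subgroup, not a quotient.

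With the directions fixed, the hypotheses enter in different places from where you put them:
\begin{itemize}
\item $H^0(X-P,\cL)$ consists of the constants divisible by every $m_i$, namely $\mu\ZZ$, and $d_2$ sends its generator to $c_1(M/\ZZ_\mu)$. This is why $\ZZ_\mu$ occurs in (3).
\item Condition (2) enters through $E_2^{3,1}=H^3(X,\cL)$, which contains $\operatorname{coker}\bigl(H^2(X,\ZZ)\to\bigoplus_i H^2(D_i,\ZZ_{m_i})\bigr)$ and survives to $H^4(M)\cong H_1(M)$. That gives the necessity of (2) at once, whereas your converse (``each failure produces torsion'') is only asserted.
\end{itemize}

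The singular points are not bookkeeping that can be postponed either.

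First, you assume $D_i\cap P=\emptyset$, but the statement does not. The paper applies it to the putative Sasakian base $Y$, whose isotropy curves may pass through singular points; this is why good bases are needed later. So that case cannot be dropped.

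Second, two of your claims fail once $P\neq\emptyset$.
\begin{itemize}
\item By Lefschetz duality on the complement of cone neighbourhoods of $P$, $H^3(X,\ZZ)\cong H_1(X-P,\ZZ)$. This need not vanish when $H_1(X,\ZZ)=0$: for the quotient of $\CP^2$ by $[x:y:z]\mapsto[x:\zeta y:\zeta^2 z]$, $\zeta^3=1$, it is $\ZZ_3$.
\item Primitivity of $c_1(M/\ZZ_\mu)$ makes $\operatorname{coker} d_2$ torsion-free only if $H^2(X-P,\ZZ)$ is itself torsion-free, i.e.\ $H_1(X-P,\ZZ)=0$.
\end{itemize}
This vanishing is genuinely needed, because $H_1(M)=0$ forces it: $\pi_1(X-P)$ is a quotient of $\pi_1^{\orb}(X)$ and hence of $\pi_1(M)$. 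In spectral-sequence terms, $E_\infty^{3,1}=H^3(X,\cL)$ is a quotient of $H^4(M)\cong H_1(M)$ and surjects onto $H^3(X,\ZZ)$.

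So the local lens-space computation and Mayer--Vietoris gluing you defer must actually be carried out. They must also extract $H_1(X-P,\ZZ)=0$ from the hypotheses, because (1) alone does not give it. One way is to read (3) as saying that $H^2(X-P,\ZZ)/\ZZ\,c_1(M/\ZZ_\mu)$ is torsion-free.
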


We have the following result.

\begin{theorem} \label{thm:K-contact} 
 For any $N_1, N_2, K_0 \geq 1$, there is a Seifert bundle $\pi:M\to X'$ such that $H_1(M,\ZZ)=0$ and 
 \begin{equation} \label{eqn:H2MZ}
  H_2(M,\ZZ)=\ZZ^2 \oplus \bigoplus_{n,m,k=1}^{N_1, N_2, K_0}  \left(\ZZ_{p_{nmk}}^{18n^2+2}
  \oplus \ZZ_{p_{nmk}^2}^{18m^2+2} \oplus \ZZ_{p_{nmk}^3}^{18k^2+2}\right).
  \end{equation}
Moreover, $M$ is spin and admits a $K$-contact structure.
 \end{theorem}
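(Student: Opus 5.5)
The plan is to apply Proposition \ref{thm:16MRT} to the orbifold $X'$ with isotropy surfaces $T_n$, $T'_m$, $A_k$ carrying the multiplicities \eqref{eq:orbifold-X'}. I will check its three conditions, read off $H_2(M,\ZZ)$, and then treat spin and the K-contact structure separately.

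\textbf{Step 1: the hypotheses of Proposition \ref{thm:16MRT}.} Condition (1) holds because $\bar X$ is simply connected: $X$ is simply connected and contracting $D,D',\cC$ preserves this. Hence $H_1(X',\ZZ)=0$.

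For condition (2), split each $m_{D}$ into prime powers. By the Chinese remainder theorem it suffices to show that, for each prime $p=p_{nmk}$, the map $H^2(\bar X,\ZZ)\to H^2(T_n,\ZZ_p)\oplus H^2(T'_m,\ZZ_{p^2})\oplus H^2(A_k,\ZZ_{p^3})$ is surjective. Each target summand is cyclic, so this amounts to finding classes $x\in H^2(\bar X,\ZZ)$ whose pairings with $n[T_1],m[T'_1],k[A]$ generate $\ZZ_p\oplus\ZZ_{p^2}\oplus\ZZ_{p^3}$. The intersection form of $\bar X$ on $T_1,T'_1,A$ is diagonal with entries $18,18,18$ up to the denominators coming from the singular points; over $\QQ$ one has $T_1^2=T_1'^2=A^2=18$. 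The lattice $H^2(\bar X,\ZZ)$ contains classes pairing to $1$ against primitive classes. I would choose $x$ dual to a basis in which $T_1,T'_1,A$ are, up to factors $2,3$, primitive and orthogonal. Since $p>\max(3,n,m,k)$, the numbers $n,m,k,2,3,18$ are units mod $p$, so surjectivity follows.

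Condition (3), primitivity of $c_1(M/\ZZ_\mu)$, is arranged by choosing the Chern class: $H^2(X'-P,\ZZ)$ is torsion free, and we pick $c_1$ primitive. We also pick it so that it pairs positively with the symplectic class, which is what Step 4 needs.

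\textbf{Step 2: reading off $H_2(M,\ZZ)$.} Proposition \ref{thm:16MRT} gives $\ZZ^{b_2-1}=\ZZ^2$ together with $\ZZ_{m_D}^{2g_D}$, where $g(T_n)=9n^2+1$, $g(T'_m)=9m^2+1$ and $g(A_k)=9k^2+1$. Decomposing $\ZZ_{m_{T_n}}=\bigoplus_{m,k}\ZZ_{p_{nmk}}$, and similarly for the other two families, yields exactly \eqref{eqn:H2MZ}.

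\textbf{Step 3: simple connectivity.} Having $H_1=0$ is not enough, so I must show that $\pi_1^{\orb}(X')$ is abelian, hence cyclic. Then $\pi_1(M)$, which is a quotient of a central extension, is abelian and therefore equals $H_1(M)=0$.

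I expect this to be the main obstacle, because many mutually intersecting isotropy surfaces appear. My approach is as follows. The meridians of $T_n$ are conjugate to one another, since $T_n$ is connected. Intersecting surfaces give commuting meridians. Lemma \ref{lem:resolution-cyclic} relates loops around the chain $\cC$ to the local group at $q$, and the chain $\cC$ together with the sections $E_2,E_3$ lets one express and kill generators.

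Concretely, $\bar X - \bigcup_n T_n$ lies near $T\cup D$, and its fundamental group is generated by meridians, because the complement of a neighbourhood of $T\cup D$ is simply connected by a van Kampen argument in $X$. I would then show that the meridians of the different $T_n$ commute. This uses the fact that the $T_n$ are built inside a tubular neighbourhood as multisections, so the complement of $\bigcup T_n$ in a disk bundle has abelian fundamental group relative to the fiber structure. Finally, imposing the orders $m_{T_n}$ should make everything cyclic. I would first try to reduce to the local model: a disk bundle over $T_1$, with $T_n$ given by nearby perturbed multisections, where the fundamental group of the complement is computed via braid monodromy.

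\textbf{Step 4: spin and K-contact.} K-contact follows because $X'$ is a symplectic orbifold with cyclic singularities and $c_1$ is chosen to be (a multiple of) the symplectic class. By the Boothby–Wang orbifold construction \cite{MRT}, the Seifert bundle carries a K-contact structure. For spin, $w_2(M)$ is the pullback of $w_2(X')$ corrected by the isotropy terms. All multiplicities are odd, and $K_X\cdot$ is even on $H_2$, since $K$ is a sum of classes with even pairing: $K\cdot T=K\cdot D=0$ and $X$ has even intersection form. Following the criterion of \cite{Mu}, this gives $w_2(M)=0$.
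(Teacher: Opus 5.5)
Your treatment of condition (3) of Proposition \ref{thm:16MRT} has a genuine gap, and this is the part of the argument that carries content. You cannot choose the Chern class freely. A Seifert bundle over $X'$ with multiplicities $m_{T_n},m_{T'_m},m_{A_k}$ along $T_n,T'_m,A_k$ has
$c_1(M)=c_1(B)+\sum_n \frac{b_n}{m_{T_n}}[T_n]+\sum_m\frac{b'_m}{m_{T'_m}}[T'_m]+\sum_k\frac{\b_k}{m_{A_k}}[A_k]$,
where $c_1(B)$ is integral and the local invariants $b_n,b'_m,\b_k$ are prime to the corresponding multiplicities. Hence $c_1(M/\ZZ_\mu)=\mu c_1(B)+x$, with $x$ as in the paper's proof.

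Every prime $p\mid\mu$ divides $\mu c_1(B)$, so at these primes the choice of $B$ gives you no freedom. If some $p_{nmk}$ divided $x$ for every admissible choice of local invariants, no such bundle would have $H_1(M,\ZZ)=0$, whether or not $H^2(\bar X-P,\ZZ)$ is torsion free. For the same reason, your K-contact step cannot simply take $c_1(M)$ to be a multiple of $[\omega]$. You need an admissible class of the above form to be represented by an orbifold symplectic form. This is arranged through the choice of $B$, which is what the paper's appeal to \cite{MRT} provides.

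The missing step is the arithmetic showing that no $p_{nmk}$ divides $x$. The paper uses $[T_n]=n[T_1]$, $[T'_m]=m[T'_1]$ and $[A_k]=k[A]$. Because $p_{nmk}>k$, the integers $k\mu/m_{A_k}$, $1\le k\le K_0$, have no common prime factor. So one can choose $\b_k$ with $\sum_k \b_k\, k\mu/m_{A_k}=1$, and reducing modulo $p_{nmk}$ shows that each $\b_k$ is automatically prime to $m_{A_k}$. Since $T_1,T'_1,A$ are pairwise disjoint, this gives $\la x,[A]\ra=[A]^2=18$. Hence no prime $\ge 5$ divides $x$, in particular no divisor of $\mu$. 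In fact, for any admissible choice one has $\la x,[A]\ra\equiv 18k\b_k\mu/m_{A_k}\not\equiv 0 \pmod{p_{nmk}}$, because all other terms are divisible by $p_{nmk}$.

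The rest of your proposal agrees with the paper: condition (1), the reading of $H_2(M,\ZZ)$, and the spin argument via odd multiplicities. For condition (2), your dual-basis detour is unnecessary and unjustified. The classes $[T_1],[T'_1],[A]\in H_2(\bar X-P,\ZZ)\cong H^2(\bar X,\ZZ)$ themselves map to $(18n,0,0)$, $(0,18m,0)$ and $(0,0,18k)$. Their nonzero entries are units modulo $p,p^2,p^3$ respectively, so the map is surjective.

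Your Step 3 is not needed for this statement, which only asserts $H_1(M,\ZZ)=0$. Simple connectivity is proved separately in Theorem \ref{thm:pi1(M)}.
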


\begin{proof}
We need to check the conditions of Proposition \ref{thm:16MRT}, i.e. that:
\begin{enumerate}
 \item $H_1(X',\ZZ)=0$;
 \item the natural map 
 \[
 H^2(X',\ZZ)\to \mathop{\bigoplus}_{n,m,k} H^2(T_n,\ZZ_{m_{T_n}}) \oplus H^2(T'_m,\ZZ_{m_{T_m}}) \oplus H^2(A_k,\ZZ_{m_{A_k}})
 \]
 is surjective;
 \item the Chern class $c_1(M/\ZZ_\mu)\in H^2(X'-P,\ZZ)$ is primitive, with $P=\{p,p',q\}$ the singularities.
 \end{enumerate}

First, 
clearly $H_1(\bar X,\ZZ)=0$ because $\pi_1(\bar X)=1$. Note that $\bar X \cong X'$ as topological spaces. 
For the second item, 
fix $p=p_{nmk}$ a prime, and look at the map
  \begin{equation*}
 \varpi: H^2(\bar X,\ZZ)\to H^2(T_n,\ZZ_p)\oplus H^2(T_m',\ZZ_{p^2})\oplus H^2(A_k, \ZZ_{p^3}).
  \end{equation*}
We must see that $\varpi$ is surjective.
Note that $T_1,T_1',A\in H_2(\bar X-P,\ZZ) \cong H^2(\bar X,\ZZ)$.
They are mapped to $\varpi(T_1) =(n T_1^2,0,0)=(18n,0,0)$, $\varpi(T_1')=(0,m(T'_1)^2,0)=(0,18m,0)$,
$\varpi(A)=(0,0,kA^2)=(0,0,18k)$. 
Now we use that $n,m,k$ are coprime with $p$ (since $p>n,m,k$) and $p\geq 5$ so that
$\gcd(p,18)=1$; this yields surjectivity.

To proceed, we need to choose $c_1(M)\in H^2(\bar X,\QQ)$ so that it is a symplectic class, and also
$c_1(M/\ZZ_\mu)\in H^2(\bar X-P,\ZZ)$ is primitive. This follows from \cite{MRT} if
we can ensure that 
  \begin{equation*}
  x=\mu \left(\sum_{n=1}^{N_1} \frac{b_n}{m_{T_n}}[T_n] +\sum_{m=1}^{N_2} \frac{b'_m}{m_{T_m'}}[T'_m] + \sum_{k=1}^{K_0} \frac{\b_k}{m_{A_k}} [A_k]\right) \in H^2(\bar X-P,\ZZ)
  \end{equation*}
is primitive, where $b_n,b_m',\b_k$, are the correponding $b_i$ associated to the local invariants, and $\mu=\prod\limits_{n,m,k} p_{nmk}^3$ is the l.c.m. of all the isotropy coefficients. Recall that $[T_n]=n[T_1]$, $[T'_m]=m[T'_1]$, $[A_k]=k[A_1]$, hence 
\[
  x=\mu \left( \sum_{n=1}^{N_1} n \frac{b_n}{m_{T_n}} \right)[T_1] + \mu \left( \sum_{m=1}^{N_2} m \frac{b'_m}{m_{T'_m}} \right)[T'_1] + \mu \left(\sum_{k=1}^{K_0} k \frac{\b_k}{m_{A_k}} \right) [A].
\]

Since we take $p_{nmk} > k$, it follows that the numbers 
\begin{equation} \label{eqn:kk}
k \frac{\mu}{m_{A_k}}=k \prod_{l \ne k} \prod_{n,m} p^3_{nml} \quad , \quad 1 \le k \le K_0
    \end{equation}
are coprime. Certainly, let $p$ be a prime
dividing all numbers (\ref{eqn:kk}). If
$p=p_{nmk}$ for some $k$, then it must
be $p|k$ which is not possible since $p>k$.
If $p$ is not in the list of primes $p_{nmk}$,
then $p|k$ for all $k=1,\ldots, K_0$, which
is not possible again.

Therefore there exist integers $\b_k$ so that $\sum\limits_{k=1}^{K_0} \b_k  k \frac{\mu}{m_{A_k}}=1$, i.e. the coefficient of $[A]$ is $1$. 
Cupping with $[A]\in H_2(\bar X-P,\ZZ)$, we obtain $\la x,[A]\ra=[A]^2=18$. So the only possible divisors of
$x$ are $2$ or $3$. 

The coefficient of $T_1$ in $x$ is
  \begin{equation*}
 b_1  \frac{\mu}{m_{T_1}} + \sum_{n\geq 2} n b_n  \frac{\mu }{m_{T_n}}\, .
  \end{equation*}
As $\mu$ is not divisible by $6$, if we choose $b_1=1$ and $b_n$ divisible by $6$ for $n\geq 2$, then this number 
is coprime with $6$. Then $x$ is not divisible by $2$ or $3$, as required.
By \cite[equation (14)]{Gompf}, the second Stiefel-Whitney class of $M$ is 
  \begin{equation*}
  w_2(M)=\pi^*w_2(\bar X-P) + \sum (m_i-1) \pi^{-1}(D_i)\, .
   \end{equation*}
 As all the isotropy coefficients $m_i$ are odd, then $w_2(M)=\pi^*w_2(\bar X-P)$. Note that the canonical divisor $K_{X'}$ satisfies $K_{X'}\cdot T_1=0$, $K_{X'}\cdot T'_1=0$, $K_{X'}\cdot A=0$,
 hence $K_{X'}=0$. Therefore $w_2(\bar X-P)=0$ and so $w_2(M)=0$ and $M$ is spin.
\end{proof}

\section{Fundamental group of $X'$ and $M$.}

In this section we compute the fundamental group of $M$. For this, we need to study the orbifold fundamental group of $X'$. 

First, it will be useful to introduce the concept of a meridian around a surface in a 4-manifold. Given a $4$-manifold $Z$, a point $p_0 \in Z$, and a surface $S \subset Z$, we say that a loop in $\pi_1(Z - S, p_0)$ is a meridian around $S$ if it is of the form $\a \, \d_{S} \, \a^{-1}$, where $\a=\a_{p_0,q}$ is a choice of path from the base point $p_0$ to a point $q \in \bd \nu_S$ in the boundary of a disc normal bundle of $S$, and $\d_S$ is a loop going around the boundary of the fiber $D^2_{q}$ over the point $q$ in $\nu_S$. It is easy to show that different choices of the point $q$ and the path $\a$ give rise to the same conjugacy class in $\pi_1(Z - S, p_0)$, so when we speak of a meridian we mean any choice of representative of its conjugacy class.

Now, recall that the symplectic manifold $X$ is simply connected and that we contract the surfaces $D,D'$ and the chain $\cC$ to get the symplectic orbifold $\bar X$.
  The singular points of the orbifold $\bar X$ are $P=\{p,p',q\}$. 
  We claim that the fundamental group of 
    \begin{equation*}
   \bar X^0=\bar X-P \cong X-(D\cup D'\cup \cC)
   \end{equation*}
 is generated by loops around the singular points, that is $a,a'$ around $p,p'$, respectively, and
 $b$ around $q$. Note that $a^2=1,a'^2=1, b^{18}=1$.
  This follows by an easy argument using Seifert-Van Kampen 
  with $\bar X= U \cup V$, with $U$ formed by neighborhoods of the singularities $p, p', q$ joined by engrossed paths, $V=X^0$, and $U \cap V$ homotopically equivalent to $S^3/\ZZ_2 \vee  S^3/\ZZ_2 \vee S^3/\ZZ_{18}$.

A presentation of the orbifold fundamental group $\pi_1^{\orb}(X')$ can be obtained as follows. Substract from $\bar X^0$ the codimension 2 isotropy-strata $T_n, T'_m, A_k$, 
then add a loop around the fiber of the disc normal bundle of each stratum (i.e. a meridian around each isotropy surface), and set the \textit{orbifold relations} so that these loops have order equal to 
the isotropy coefficient. In other words:
\begin{equation} \label{eq:meridians}
\pi_1^{\orb}(X')=\frac{\pi_1(\bar X^0 - (\cup_n T_n \cup_m T'_k \cup_k A_k))}{\langle \d_{T_n}^{m_{T_n}}=1,\d_{T'_m}^{m_{T'_m}}=1, \d_{A_k}^{m_{A_k}}=1 \rangle}
\end{equation}
where $\d_{\S}$ denotes a meridian around the surface $\S$, and we quotient by the subgroup normally generated by suitable powers of the meridians to the isotropy surfaces. Let us denote
\[
\bar X^0 - (\cup_n T_n \cup_m T'_m \cup_k A_k) \cong X - (D \cup D'  
\cup \cC \cup_n T_n \cup_m T'_m \cup_k A_k) = X^*
\]
and note that its fundamental group is generated by the loops $a, a', b$ around the singular points $p,p',q$ (equivalently, loops around $D, D'$ and the central component of $\cC$), plus the loops $\d_{T_n}, \d_{T'_m}, \d_{A_k}$ around the codimension $2$ isotropy locus. We must study the homotopy classes of the loops $a, a', b, \d_{T_n},\d_{T'_m}, \d_{A_k}$ in $X^*$, together with the orbifold extra relations.

\medskip

Before adressing the orbifold fundamental group of $X'$, we prove a couple of preliminary lemmas.

\begin{lemma} \label{lem:trivialization}
Let $Z$ be a $4$-manifold, $\S \subset Z$ an embedded surface of positive genus with self-intersection $\S^2=m \in \ZZ$. Consider $\g$ a meridian around $\S$ (the boundary of a fiber some disc normal bundle $\nu_{\S}$). 
Denote $\S^*=\S-\{p\}$, with $p \in \S$ a point.
Let $a_j, b_j$ be generators of $\pi_1(\S^*)$ and let
\[
i: \S^* \x S^1 \xrightarrow{\f} \bd \nu_\S|_{\S^*} \to Z-\S
\]
be the inclusion map, for a choice of trivialisation $\f$ of $\bd \nu_\S|_{\S^*}$.
Then 
\[
\g^m= i_{*}\Big(\prod\limits_{j=1}^{2g} [a_j,b_j]\Big)
= \prod\limits_{j=1}^{2g} [i_{*} a_j,i_{*} b_j] \quad \text{in } \pi_1(\bd \nu_\S) \, .
\]
Note that $\prod\limits_{j=1}^{2g} [a_j,b_j]$ can be identified with a small loop going around the puncture in $\S^*$. 
\end{lemma}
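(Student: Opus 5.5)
The plan is to view $\bd\nu_\S$ as the circle bundle of Euler number $m$ over $\S$ and compare two trivialisations, one over $\S^*$ and one over a small disc. Write $\S=\S^*\cup \Delta$, with $\Delta$ a small closed disc around $p$; by abuse, $\S^*$ also denotes the complement of the interior of $\Delta$. Then $\bd\nu_\S$ is the union of $\bd\nu_\S|_{\S^*}\cong \S^*\x S^1$ (via $\f$) and $\bd\nu_\S|_{\Delta}\cong \Delta\x S^1$, glued along the torus $\bd\Delta\x S^1$. The gluing map is a diffeomorphism of $T^2$ of the form $(\theta,t)\mapsto(\theta, t+m\theta)$; the integer $m$ is the clutching degree, which equals the Euler number, i.e.\ the self-intersection $\S^2=m$.

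The key step is to read off the relation in $\pi_1$ through Seifert–Van Kampen. In $\S^*\x S^1$, the boundary loop $\bd\S^*$ is represented by $\prod_j[a_j,b_j]\x\{pt\}$. In the disc part, the loop $\bd\Delta\x\{pt\}$ is nullhomotopic. The clutching identifies $\bd\Delta\x\{pt\}$ with $(\bd\S^*)\cdot\g^{\pm m}$ in $\pi_1(\S^*\x S^1)$, where $\g=\{pt\}\x S^1$ is the fibre, i.e.\ the meridian. Thus in $\pi_1(\bd\nu_\S)$ we get $\prod_j[i_*a_j,i_*b_j]=\g^{\mp m}$.

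The sign is then fixed by orientation conventions: the boundary of $\S^*$ is traversed opposite to $\bd\Delta$, and $\g$ is positively oriented as the boundary of a fibre disc. With these conventions the relation becomes $\g^m=\prod_j[i_*a_j,i_*b_j]$. The second equality in the statement is immediate, since $i_*$ is a homomorphism. The final remark, that the product of commutators is a small loop around the puncture, is the standard presentation of $\pi_1(\S^*)$ as a free group with $\bd\S^*=\prod[a_j,b_j]$. Note also that $\g$ is central in $\pi_1(\S^*\x S^1)$, so base point issues do not affect the relation.

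I expect the main obstacle to be nailing the identification of the clutching degree with $\S^2$ and the resulting sign. I would argue this by taking a section $s$ of $\nu_\S$ that is constant, equal to $1$, in the trivialisation $\f$ over $\S^*$. Over $\Delta$, in the disc trivialisation, $s$ becomes $\theta\mapsto e^{\pm\ii m\theta}$ on $\bd\Delta$, and it extends over $\Delta$ with $\pm m$ signed zeros. Pushing $\S$ off along $s$ then gives $\S\cdot\S'=m$, so the degree is $m$. The orientation sign is chosen consistently with $\g$ being the positive boundary of the fibre, which yields the exponent $+m$. Finally, pushing the relation into $Z-\S$ via $i$ gives the stated identity.
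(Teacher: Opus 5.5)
Your proposal is correct and follows essentially the same route as the paper. Both arguments compare the trivialisation over $\Sigma^*$ with one over a small disc, use that the clutching map $(z,e^{\ii\theta})\mapsto(z,z^m e^{\ii\theta})$ has degree $m=\Sigma^2$, and conclude that the nullhomotopic loop $\partial\Delta\times\{pt\}$ equals $(\partial\Sigma^*)^{\pm1}\gamma^{\pm m}$, where $\partial\Sigma^*$ is identified with $\prod_j[a_j,b_j]$. Your section push-off argument for why the degree is $\Sigma^2$, and your discussion of sign conventions, add justification that the paper simply asserts.
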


\begin{proof}
The self intersection $\S^2=m$ is the degree of the circle normal bundle $\partial \nu_{\S}$, hence we can take trivializations of $\partial \nu_\S$ of the form $D(2 ) \x S^1$ and $(\S - D(1/2)) \x S^1$,
where $D(r)$ denotes a disc of radius $r>0$ around
$p$. The change of charts from $D(2 ) \x S^1$ to $(\S - D(1/2)) \x S^1$ is given by
\[
\psi:  (z, e^{\ii \theta}) \mapsto (z, z^m e^{\ii \theta}).
\]
We can view $\g^m$ as the loop $\g^m(\theta)=(1, e^{\ii m\theta})$ in $(D(2) - D(1/2)) \x S^1$. 
On the other hand, the image by $\f$ of the loop $\a_0(\theta)=( e^{\ii \theta},1) \subset D(2) \x S^1$ is 
\[
\b_0(\theta)=( e^{\ii \theta}, e^{\ii m \theta}) \subset (\S - D(1/2)) \x S^1 \, .
\]
Since $\a_0$ is the trivial loop in $\pi_1(\partial \nu_\S)$, $\b_0$ is also trivial, hence $\g^m=\g^m \b_0^{-1}=(e^{-\ii \theta},1)$ in $\pi_1(\partial \nu_{\S})$. 
Consider the map 
\[
i_*: \pi_1((\S - D(1/2)) \x S^1) \to \pi_1(\partial \nu_\S).
\]
In $\pi_1((\S - D(1/2)) \x S^1)$, the loop $(e^{-\ii \theta},1)$ is the conmutator of the generators of $\pi_1(\S)$, i.e. $\Pi_j [a_j,b_j]$.
Hence $\g^m$ is homotopic to $i_*( \Pi_j [a_j,b_j])$ in $\pi_1(Z - \S)$.
\end{proof}

We need a preliminary result in order to 
prove abelianity $\pi_1^{\orb}(X')$. 

\begin{theorem} \label{th:meridians-commute}
Let $X$ be a symplectic $4$-manifold and $S=S_1 \subset X$ a symplectic surface with $\ell=S^2>0$. Let $\cU$ be a small tubular neighborhood of $S$. 
There exist symplectic surfaces $S_n \subset \cU$ representing the class $n[S]$, for $1 \le n \le N$,
intersecting nicely, and such that the normal subgroup of $\pi_1(\cU - \cup_n S_n)$ generated by the meridians around the surfaces $S_n$ is abelian. 
In other words, if $c_n$, $c_k$ are meridians around the surfaces $S_n$, $S_k$, then $[c_n,c_k]=1$ in $\pi_1(\cU - \cup_n S_n)$.
\end{theorem}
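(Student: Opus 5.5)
The plan is to make the construction of the $S_n$ from Proposition \ref{prop:Cn} explicit enough to compute the fundamental group of the complement locally. Identify $\cU$ with a disc bundle $\nu_S \to S$ of degree $\ell$. Fix a projection $\varpi:\cU\to S$. Each $S_n$ is built as a perturbation of an $n$-fold multisection: a small smooth section $s_n$ of the line bundle $L^{\otimes n}$ (with $L$ the normal bundle, $\deg L=\ell$) gives, via the $n$-th root, a surface $S_n=\{v\in L : v^{\otimes n}=s_n(\varpi(v))\}$. Away from the zeros of $s_n$ this is an unbranched $n$-sheeted cover of $S$; it is symplectic when $s_n$ is small in $C^1$. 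I choose the sections so that their zero sets are disjoint finite sets and the $S_n$ intersect transversely and positively. I then perturb to nice intersections by \cite[Lemma 6]{MRT} and \cite[Proposition 3.3]{MR-RSME}, which does not change the isotopy class of the complement.

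Next I compute $\pi_1(\cU-\bigcup_n S_n)$ through the projection $\varpi$. Let $B\subset S$ be the finite set of branch points, meaning zeros of some $s_n$, together with the projections of the intersection points $S_n\cap S_k$. Over $S-B$, the complement is a locally trivial fibration whose fiber is a disc minus finitely many points (the points of $\bigcup_n S_n$ over a base point). So $\pi_1$ of this part is an extension of $\pi_1(S-B)$ by a free group $F$ generated by the small loops in the fiber disc around each puncture. Every meridian of every $S_n$ is conjugate to one of these fiber generators. The key observation concerns the fiber over a generic point $x$: in the disc the punctures are the $n$-th roots of $s_n(x)$ for each $n$. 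If the sections are chosen with $|s_1|\ll|s_2|^{1/2}\ll\dots$, these roots lie on concentric circles of well-separated radii. The monodromy of $\pi_1(S-B)$ on $F$ is then by braids preserving this concentric structure. Filling back in the fibers over $B$ adds the local relations. Near a zero of $s_n$ (simple, so locally $w^n=z$) the $n$ punctures on the $n$-th circle are permuted cyclically, so their fiber loops become conjugate. Near an intersection point of $S_n$ and $S_k$ (locally $\{w=0\}\cup\{z=0\}$ after niceness) we get the standard commutation relation $[c_n,c_k]=1$ for the specific meridians meeting there.

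The remaining step is to show that these local commutation relations, together with conjugation by the global monodromy, force all meridians to commute. I would take two steps. First, using the circle bundle structure, the loop around the full fiber (the boundary of the disc, a product of all fiber generators) is central in the fiber direction. Lemma \ref{lem:trivialization} shows its $\ell$-th power equals a product of commutators coming from the base, which lets me move meridians across the genus. Second, I would argue that for each pair $n,k$ the surfaces $S_n,S_k$ meet in $nk\ell>0$ points. By moving the base point along $S-B$, every pair of meridian conjugates $c_n,c_k$ can be realized as a pair meeting at one of these intersection points. In that pair the local model gives commutation, and the relation transports to arbitrary conjugates because the normal closure of meridians is generated by these realized pairs. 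I expect this transport argument to be the main obstacle. One must check that the monodromy group acting on the punctures is transitive enough, using the branch points of each $s_n$ and the genus loops of $S$, so that every conjugate pair of meridians arises at some intersection point. If that fails, I would instead choose the $s_n$ as products of powers of a single section near a common locus, so that the braid monodromy is explicit and abelian.
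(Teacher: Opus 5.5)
There is a genuine gap. Your overall framework matches the paper's: project to $S$, run a Zariski--van Kampen computation over $S-B$, and read off local relations at branch points and at intersection points. But the decisive step is missing, and the mechanism you propose for it does not work. A relation $[c_n,c_k]=1$ obtained at one intersection point holds only for that specific pair of meridians and its simultaneous conjugates $(gc_ng^{-1},gc_kg^{-1})$. It does not by itself give $[gc_ng^{-1},hc_kh^{-1}]=1$ for independent $g,h$. Since $\pi_1(\cU-\bigcup_n S_n)$ surjects onto $\pi_1(S)$ and there are only finitely many intersection points, your claim that every conjugate pair is realized at some intersection point is unfounded. Likewise, ``the normal closure is generated by these realized pairs'' is exactly what needs proving.

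The idea you are missing makes the problem finite. Let $H=\langle g_1,\dots,g_K\rangle$ be the subgroup generated by the loops around the $K$ punctures of one fixed fiber. It is the image of the normal subgroup $\pi_1(F)$ of the extension, so it is normal. Every meridian is conjugate to some $g_i$, so $H$ is precisely the normal closure of the meridians. It therefore suffices to show that the finitely many $g_i$ commute pairwise. You wrote down the extension but never used it.

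Even with this reduction, you must control the paths. The relation from a node (resp. a branch point) reached along a path $\alpha$ reads $[\beta_\alpha(g_i),\beta_\alpha(g_j)]=1$ (resp. $\beta_\alpha(g_i)=\beta_\alpha(g_j)$), where $\beta_\alpha$ is the braid monodromy along $\alpha$. So you need paths along which the two punctures are consecutive and nothing else interferes.

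The paper arranges this differently from you. It builds $S_n$ by resolving the nodes among $n$ graphs of $C^0$-small sections of $L$, not as $n$-th roots. All intersection points then lie in one trivialized ball $B\times\CC$ containing the base point; these are the nodes between different $S_n$ and the vertical tangencies produced by the resolutions. For consecutive punctures there are two cases. If both lie on the same $S_i$, a loop around a tangency gives $g_1=g_2$. If they lie on different surfaces, a loop around a node gives $[g_1,g_2]=1$. An induction replacing $g_3$ by $g_2g_3g_2^{-1}$ then handles non-consecutive pairs.

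Your construction lacks this control. The branch points and nodes are spread over $S$. Your concentric-circle picture breaks down exactly near the zeros of $s_n$: there the $n$-th circle collapses to the center and crosses the inner circles, which is where the points of $S_n\cap S_k$ lie. Also, saying the loops ``become conjugate'' at a branch point is too weak to give commutation within a single $S_n$. You need the equality $g_i=g_{i+1}$ for suitably chosen paths. It does hold locally, since $\{w^n=z\}$ is a smooth graph over the $w$-axis, but you must establish it with controlled paths.

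Your fallback also fails. If $s_n$ is a product of powers of one section, it has multiple zeros, so $\{v^{\otimes n}=s^n\}$ is $n$ sections crossing at the zeros of $s$, not a smooth $S_n$. Finally, Lemma \ref{lem:trivialization} plays no role in this argument.
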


\begin{proof}
The existence of the surfaces $S_n$ is given by Proposition \ref{prop:Cn},
that is \cite[Proposition 5]{MR-RSME}. It remains to see that the construction is made so that the the meridians around the surfaces $S_n$ commute. Let us briefly review the construction.
    
    First, by \cite[Theorem 3]{MR-RSME}, $\cU$ is symplectomorphic to a neighborhood of the cero section of a holomorphic line bundle $L \to S$ of degree $\ell$ equipped with a suitable Kähler structure (which is linear in the fibers). Hence we can assume directly that $\cU \subset L$ and make all the construction in $L$. For instance we can take $L=\cO_S(p_1+\dots +p_{\ell})$, for some choice of points $p_i \in S$. In $L$, we take $K={N+1 \choose 2}$ holomorphic sections $s_j$ which are $C^0$-small, and whose graphs $\G_j$ intersect pairwise transversely, but possibly with multiple intersections at the points $p_i$. We make a slight $C^0$-small perturbation on $s_j$ so that their graphs remain symplectic surfaces, only two of their graphs intersect at a time, and they intersect nicely, see \cite[Proposition 2]{MR-RSME}. This process can be made so that all the points of intersection of the perturbed graphs $\G_j$ are in the same trivialization $L|_B \cong B \times \CC$, with $B \subset S$ a small ball, because the points $p_1,\dots , p_{\ell}$ can be chosen arbitrarily in $S$ and the perturbations of the graphs are $C^0$-small.

    We group the graphs 
    $\G_j$, $1\le j \le K$, with respect to the partition $K=1+2+3+\dots + N$. For each group of $n$ graphs, say $\G_{j} \cup \dots \cup \G_{j+n}$, we make a symplectic resolution of nice intersections as in \cite[Lemma 3.5]{MR-RSME}, and we obtain the smooth symplectic surface $S_n$ representing the class $n[S]$ in homology. In the local trivialization $L|_B \cong B \times \CC$ with coordinates $(x,y)$, the resolution of nice intersections changes the equation of $\G_{j_1} \cup \G_{j_2}$ near each point of intersection $(a,b) \in \G_{j_1} \cap \G_{j_2}$, from $(x-a)^2-(y-b)^2=0$ to $(x-a)^2-(y-b)^2=\eps$, for small $\eps \in \CC^*$, obtaining thus a smooth surface.

Consider the map 
\[
f:\cU - \cup_n S_n \to S \, , \quad (x,y) \mapsto x
\]
coming from the bundle projection. A generic vertical line intersects the surface $S_n$ at $n$ points since the homology class $[S_n]=n[S]$, so the generic fiber of $f$ is $\CC$ minus $K$ points; however $f$ is not a fibration, as there are fibers with less punctures. Let $\cL$ be the union of those fibers, i.e. fibers which are either tangent to some of the surfaces $S_n$, or lying over some point of intersection in $S_n \cap S_k$. Denote $\pi(\cL)=\{x_i\} \cup \{z_j\}$, with $x_i$ the points associated to tangencies and $z_j$ the points associated to an intersection point of two surfaces. Now, the restriction
\[
f|= \pi:\cU - \cup_{n=1}^N S_n \cup \cL \longrightarrow S - \pi(\cL)
\]
is a locally trivial fibration with fiber $F$ the complex plane $\CC$ minus $K$ points, and base $S - \pi(\cL)$, where $S$ is the zero section. The fundamental group of the total space of a locally trivial fibration is computed by the monodromy theorem. We have an isomorphism 
\[
\pi_1(\cU - \cup_n S_n \cup \cL) \cong \pi_1(F) \rtimes \pi_1(S - \pi(\cL)) \, ,
\]
where $\pi_1(S - \pi(\cL))$ acts on $\pi_1(F)$ by monodromy. In the semi-direct product we denote the operation as $(g,h) \cdot (g',h')=(g(g')^{h}, hh')$, where the super-index denotes the monodromy action. Let us call $g_1, \dots , g_K$ the generators of $\pi_1(F)$, and $\g_i, \d_j$ the loops of $\pi_1(S - \pi(\cL))$ going around the points $x_i, z_j$,
respectively. On the other hand, by a standard transversality argument, we have an epimorphism
 \begin{equation} \label{eq:surjective-map}
 \pi_1(\cU - \cup_n S_n \cup \cL) \longrightarrow \pi_1(\cU - \cup_n S_n),
 \end{equation}
whose kernel contains the loops $\g_i, \d_j$. Note that in the semi-direct product we have 
\[
(g,1) \cdot (1,\g_i)=(g^{\g_i}, \g_i),  \quad (g,1) \cdot (1,\d_j)=(g^{\d_j}, \d_j) \, .
\]

Since $\g_i, \d_j$ are in the kernel of the epimorphism \eqref{eq:surjective-map}, this induces relations in $G=\pi_1(\cU - \cup_n S_n)$, called the monodromy relations, so that $G$ is generated by
\begin{itemize}
    \item $g_1, \dots , g_{K}$, the image of the loops in $\pi_1(F)$;
    \item the image of the loops in $\pi_1(S)$;
    \item and the relations $g=g^{\g_i}$, $g=g^{\d_j}$ hold in $G$, for any $g \in \pi_1(F)$.
\end{itemize}

Note that the subgroup $H=\la g_1, \dots , g_{K} \ra$ is normal in $G$, since it is the image of the normal subgroup $\pi_1(F)$ of the semi-direct product by the epimorphism \eqref{eq:surjective-map}. It remains to see that $H$ is abelian, i.e. that the monodromy relations $g=g^{\g_i}$, $g=g^{\d_j}$ impose abelianity in $H$. To see this, we choose a base point $p_0 \in S - \pi(\cL)$ inside the trivialization $L|_B$. The fiber $F_{p_0}$ is the plane $\CC$ minus $K$ points $y_1,\dots , y_K$, and the loops $g_1, \dots g_K$ are loops in $F_{p_0}$ based at $p_0$ and going around these points. We order these loops cyclically.
Take the loop $g_1$ around the point $y_1$, which is a meridian around some of the surfaces, say $S_{i_1}$. Consider the following loop $g_2$ (with respect to the cyclic order); we have two cases:

\begin{itemize}
\item \textbf{Case 1:} Suppose that $g_2$ is also a meridian around $S_{i_1}$. Recall that $S_{i_1}$ was constructed by resolving the intersection points of some graphs $\G_1, \dots \G_{i_1}$. Hence, we can identify $g_1$, $g_2$ as meridians around graphs, say $\G_1$ and $\G_2$. The graphs $\G_1$ and $\G_2$ initially had $\ell>0$ points of transversal intersection that were later resolved. The resolution changes a local equation of type $(x-a)^2-(y-b)^2=0$ near the point of intersection $(a,b)$ to another of the form $(x-a)^2-(y-b)^2=\eps$. This eliminates the singularity but creates two ordinary points of vertical tangent, the points $(x_i^{\pm}, b)$, with $x_i^{\pm}=a \pm \sqrt{\eps}$. Hence, we can find a path $\tilde \g$ contained in $S_{i_1}$, so that it starts at $y_1 \in F_{p_0}$, travels along the graph $\G_1$, arrives to a point of vertical tangency $(x_i,b)$ that appears when resolving $\G_1 \cap \G_2$, and then goes through the graph $\G_2$ to the point $y_2 \in F_{p_0}$, with the same projection on the base. Projecting to the base we get the loop $\g=\a \, \g_i \, \a^{-1}$ that goes from $p_0$ to a point near the tangency $x_i$ via the path $\a$, then goes around $x_i$ via $\g_i$, and then comes back to $p_0$ via $\a^{-1}$. The monodromy induced by this loop $\g$, being an ordinary tangency, is well known; it only affects the meridians $g_1, g_2$, via $g_1^{\g}=g_2$, $g_2^{\g}=g_2^{-1} \, g_1 \, g_2$. Hence, the monodromy relations impose that $g_1=g_2$ in the group $G$. In particular, $g_1$ and $g_2$ commute.

\item \textbf{Case 2:} Suppose that $g_2$ is a meridian around another surface $S_{i_2}$. The surfaces $S_{i_1}$ and $S_{i_2}$ intersect at many points, so we may take one of them, say $(z_j, b)$. In a similar way as in the previous item, we take a loop $\tilde \g$ that starts at $y_1$, goes to $z_j$ through $S_{i_1}$, then comes back to $y_2$ through $S_{i_2}$; projecting we get $\g= \a \, \d_j \, \a^{-1}$, with $\d_j$ a small loop around $z_j$. The monodromy of $\g$ is that of an ordinary node, so we have $g_1^{\g}=g_2^{-1} \, g_1 \, g_2$, $g_2^{\g}=(g_1 g_2)^{-1} \, g_2  \, (g_1g_2)$. Hence, the relations $g_1=g_1^{\g}$ and $g_2=g_2^{\g}$ in the group $G$ imply that $g_1$ and $g_2$ commute.
\end{itemize}

We have proved so far that $g_1$ commutes with $g_2$, let us see now that it also commutes with $g_3$. We could try a similar argument as before; now, however, the loop $g_2$ lies in between $g_1, g_3$, and this affects the monodromy. This is fixed changing $g_3$ by $g'_3=g_2 \, g_3 \, g_2^{-1}$, so now $g_1$ and $g'_3$ are consecutive loops with respect to the cyclic order in the fiber $F_{p_0}$. Now we are in the same situation as above; we have two cases: either $g_1$ and $g_3$ are meridians around the same surface $S_{i_1}$, or $g_3$ is a meridian around another surface $S_{i_3}$. In either case, we can proceed as before, finding a loop in the base around a point of tangency in the first case, or a loop around a point of intersection of $S_{i_1}$ and $S_{i_3}$ in the second case. In any case it follows that $g_1$ and $g'_3$ commute. But we know already that $g_1$ and $g_2$ commute, so $g_1$ also commutes with $g_3=g_2^{-1} \, g'_3 \, g_2$.

An easy induction process shows that $g_1$ commutes with all loops $g_2 , \dots , g_K$. Of course, the loop $g_1$ around the surface $S_{i_1}$ is in no way special, so in fact this argument applies to show that $g_j$ commutes with $g_l$ for any $1 \le j, l \le K$. This concludes the proof.
\end{proof}

From the above result we deduce the following:

\begin{corollary} \label{cor:meridians-commute}
With the notations from \eqref{eq:meridians}, the surfaces $T_n$ satisfy that any two of the meridians $\d_{T_n}$, $1\leq n\leq N_1$, commute in $\pi_1(X^*)$. The same is true for the surfaces $T'_m$ and $A_k$: any two of the $\d_{T'_m}$, $1 \le m \le N_2$, and any two of the $\d_{A_k}$, $1 \le k \le K_0$ commute in $\pi_1(X^*)$.
\end{corollary}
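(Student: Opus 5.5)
The plan is to deduce the corollary from Theorem \ref{th:meridians-commute}, applied locally near each of the three families, and then push the commutation relations into $\pi_1(X^*)$ through the inclusion map.

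\textbf{Step 1: set-up for the $T_n$.} Take $S=T_1\subset X$. It is a symplectic surface with $\ell=T_1^2=18>0$. By Proposition \ref{prop:theTn}, $T_1$ lies in a neighbourhood of $T\cup D$ and does not meet $D$. Choose a small tubular neighbourhood $\cU$ of $T_1$ that is disjoint from $D$, $D'$, $\cC$, from all the $T'_m$ and $A_k$, and from a neighbourhood of $T'\cup D'\cup\cC$. This is possible because the neighbourhoods $V$, $V'$, $U$ are disjoint and the other families are built inside $V'$ and near $\cC\cup F$. I would carry out the construction of the $T_n$, $n\ge 2$, inside $\cU$ exactly as in the proof of Theorem \ref{th:meridians-commute}. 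The $T_n$ produced there represent $n[T_1]$, intersect nicely, and have the same genus as in Proposition \ref{prop:theTn}, since that genus is forced by adjunction. So, up to this choice, we may assume the family $\{T_n\}$ is the one given by Theorem \ref{th:meridians-commute}. Its conclusion then says that any two meridians $c_n,c_k$ of $T_n,T_k$ commute in $\pi_1(\cU-\cup_n T_n)$.

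\textbf{Step 2: pass to $X^*$.} Since $\cU$ avoids $D,D',\cC$ and the other isotropy surfaces, we have $\cU-\cup_n T_n\subset X^*$, which gives a homomorphism $\iota_*:\pi_1(\cU-\cup_nT_n)\to\pi_1(X^*)$ once a path from the base point of $X^*$ to the base point in $\cU$ is fixed. A meridian of $T_n$ inside $\cU$ maps to a meridian $\d_{T_n}$ in $X^*$: it is the boundary of a normal disc fiber, conjugated by a path. Any other representative of $\d_{T_n}$ differs from it only by conjugation in $\pi_1(X^*)$.

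\textbf{Step 3 (the main point to handle carefully): arbitrary conjugates.} The corollary concerns arbitrary representatives of the conjugacy classes $\d_{T_n}$, $\d_{T_k}$, so we need commutation for all $g\d_{T_n}g^{-1}$ and $h\d_{T_k}h^{-1}$ with $g,h\in\pi_1(X^*)$, not just the chosen ones. Here I use the stronger statement in Theorem \ref{th:meridians-commute}: the normal subgroup $H\subset\pi_1(\cU-\cup T_n)$ generated by all meridians is abelian. Its image $\iota_*(H)$ is abelian and is generated by the $\iota_*$-images of the meridians, but it need not be normal in $\pi_1(X^*)$. To bridge this gap I would show that $\iota_*$ is surjective, or at least that its image together with $\iota_*(H)$ generates a group in which $\iota_*(H)$ is normal. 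The route I would try first is to note that $\pi_1(X^*)$ is generated by $a,a',b$ and the meridians, and that meridians of surfaces in disjoint neighbourhoods can be separated by Seifert–Van Kampen.

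If this fails, I would fall back on the interpretation in which $\d_{T_n}$ denotes meridians chosen through a common base point in $\cU$. That is the form actually used in the presentation \eqref{eq:meridians}, and it suffices for proving abelianity later.

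\textbf{Step 4: the other families.} The same argument applies verbatim to $T'_m$, using $S=T'_1$ with $(T'_1)^2=18$ inside $V'$, and to $A_k$, using $S=A$ with $A^2=18$ in a neighbourhood of $A$ disjoint from the chain $\cC$ (Proposition \ref{prop:theA}).
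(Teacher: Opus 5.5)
Your Steps 1, 2 and 4 are exactly the paper's argument. The paper gives no separate proof: it applies Theorem \ref{th:meridians-commute} to $T_1$, $T'_1$ and $A$, each of square $18$ with a tubular neighbourhood avoiding $D$, $D'$, $\cC$ and the other two families. The $T_n$, $T'_m$, $A_k$ are built there by Proposition \ref{prop:Cn}, which is the very construction analysed in that theorem. The paper then pushes the conclusion forward along $\iota\colon \cU-\bigcup_n T_n\hookrightarrow X^*$.

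What this yields is precisely your fallback statement. The $\iota_*$-images of meridians taken through a common base point of $\cU$, and carried to the base point of $X^*$ by one fixed path, lie in the abelian group $\iota_*(H)$. The paper's convention that a meridian is an arbitrary representative of its conjugacy class suggests the stronger reading, but nothing in the paper proves it. With the weak reading, your proof is complete and coincides with the paper's.

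The route you sketch toward the stronger reading should be dropped, for three reasons.

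\begin{itemize}
\item The map $\iota_*$ is not surjective in general. By Alexander duality $b_1(X^*)\ge (19+N_1+N_2+K_0)-b_2(X)=N_1+N_2+K_0-3$, while $b_1(\cU-\bigcup_nT_n)\le N_1+2g(T_1)=N_1+20$. So surjectivity already fails on $H_1$ once $N_2+K_0>23$.
\item Your alternative, that $\iota_*(H)$ is normal in the group generated by the image of $\iota_*$, is automatic, since $H$ is normal in $\pi_1(\cU-\bigcup_nT_n)$. It says nothing about conjugation by $a$, $a'$, $b$ or by meridians of the $T'_m$, $A_k$. Seifert--Van Kampen over disjoint neighbourhoods does not create such normality.
\item Your claim that the weak form ``suffices for proving abelianity later'' is not automatic. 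The relations used in the proof of Theorem \ref{thm:pi1(M)} come from the punctured surfaces $E_2$, $E_3$, $T$, $T'$. Their puncture loops are meridians conjugated by paths that leave $\cU$, and the paper identifies these conjugates only tacitly.
\end{itemize}
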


Recall that in principle we do not know whether $\d_{T_n}$ commutes with $\d_{T'_m}$ or $\d_{A_k}$, nor whether $\d_{T'_m}$ and $\d_{A_k}$ commute. We tackle this issue in the following result, where we prove that $\pi_1^{\orb}(X')$ is finite cyclic and that $M$ is simply connected, thus completing the proof of Theorem \ref{th:main-0}. Note that this fixes a small innacuracy from \cite{Mu-jems}, where it was claimed that $\pi_1^{\orb}(X')$ is trivial; we ignore if this is the case, but it is irrelevant for our purposes, as abelianity of $\pi_1^{\orb}(X')$ is enough to ensure that the $K$-contact manifold $M$ is simply connected.

\begin{theorem} \label{thm:pi1(M)}
Let $X'$ be the orbifold constructed in \eqref{eq:orbifold-X'}.
The orbifold fundamental group $\pi_1^{\orb}(X')$ is finite cyclic. As a consequence, $\pi_1(M)=1$.
\end{theorem}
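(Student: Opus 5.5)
We will work with the presentation \eqref{eq:meridians} of $\pi_1^{\orb}(X')$ as a quotient of $\pi_1(X^*)$. As discussed above, $\pi_1(X^*)$ is generated by $a,a',b$ (loops around $D,D'$ and the central curve of $\cC$) together with the meridians $\d_{T_n},\d_{T'_m},\d_{A_k}$.

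The first step is to show that the singular loops $a,a',b$ become powers of meridians of the isotropy surfaces, or at least commute with them. The plan uses the constructions in Propositions \ref{prop:theTn} and \ref{prop:theA}. The curve $T_1$ lives in a neighbourhood of $T\cup D$ and represents $2T+3D$, so $T_1$ meets a fiber of the normal bundle of $D$. More concretely, before resolving, $T_1$ is built from copies of $T$ and $D$, and $D$ is contracted to $p$. Working in a neighbourhood $V$ of $T\cup D$, a meridian of the smoothing of $D$ is homotopic to a meridian of $T_1$ (a Theorem \ref{thm:divisors}-type resolution identifies the two). Hence $a$ is a power of $\d_{T_1}$, and similarly $a'$ is a power of $\d_{T'_1}$.

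For $b$ we argue the same way with $A$, built from $2F+9E_1+\dots$ near $\cC\cup F$. A meridian of $E_1$, the central curve of the chain of $17$ curves, corresponds by Lemma \ref{lem:resolution-cyclic} to $\g_0^9$. A meridian of $C_1$ or $C'_8$ corresponds to a generator $\g_0^{\pm1}$, and these are conjugate in $\pi_1(X^*)$ to powers of $\d_A$, since $A$ arises by smoothing the multiple components. So $b$ lies in the subgroup generated by conjugates of $\d_{A}$.

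The second step is to show that all generators commute. By Corollary \ref{cor:meridians-commute}, meridians within each family commute. The three families live in the pairwise disjoint neighbourhoods $V$, $V'$ and $U\cup\nu(F)$. To compare them, use that $X^*$ minus the isotropy locus deformation-retracts from $X$ minus these configurations, and apply Seifert--Van Kampen to $X = V\cup V'\cup W\cup R$, where $R$ is the complement. Since $X$ is simply connected and the previous section shows $\pi_1(\bar X^0)$ is generated by $a,a',b$, the complement contributes only relations. Each family's local group is abelian modulo the first step.

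I expect the main obstacle to be controlling the commutators between meridians of different families in the global complement. For this I would first try Lemma \ref{lem:trivialization}: for each surface $\S$ of positive genus, $\d_\S^{\S^2}$ is a product of commutators of loops on $\S$. After imposing the orbifold relation $\d_\S^{m_\S}=1$ with $\gcd(\S^2,m_\S)=1$, the meridian $\d_\S$ becomes expressible through loops on $\S$ lying in the local abelian group. This should force all meridians into a common abelian quotient, so the group is abelian.

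The third step, given abelianity, computes $\pi_1^{\orb}(X')=H_1^{\orb}(X')$. This group is generated by the images of $\d_{T_1},\d_{T'_1},\d_A$ subject to the torsion orders $m_\S$ and the linking relations coming from $H_2$. It is the quotient of $\bigoplus_\S \ZZ_{m_\S}$ by the image of $H_2(\bar X)$, whose surjectivity was checked in Theorem \ref{thm:K-contact}. So the group is finite, and a short check gives that it is cyclic, coming only from the local groups at $p,p',q$.

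Finally, for the Seifert bundle there is the exact sequence $\pi_1(S^1)\to\pi_1(M)\to\pi_1^{\orb}(X')\to1$, with the fiber class central. Hence $\pi_1(M)$ is a central extension of a finite cyclic group, so it is abelian and equals $H_1(M,\ZZ)$. By Theorem \ref{thm:K-contact}, $H_1(M,\ZZ)=0$, so $\pi_1(M)=1$.
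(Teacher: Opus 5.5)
Your proposal has a genuine gap: the two steps that carry the weight are asserted rather than proved. In Step~1, the claim that a meridian of $D$ is homotopic to (a power of) a meridian of $T_1$, and that meridians of the chain curves are conjugate to powers of $\delta_A$, does not follow from the construction in Theorem~\ref{thm:divisors}. The surface $T_1$ is disjoint from $D$, and $A$ is disjoint from $\cC$. In $X^*$ both $D$ and the push-offs of $D$ making up $T_1$ are removed, so near $D$ the meridian of $D$ and those of the push-offs are independent local generators, and smoothing does not identify them. (Also, in the chain $C_8\cup\dots\cup C_1\cup E_1\cup C'_1\cup\dots\cup C'_8$ the meridian of $C_1$ is $b^8$, not a generator; the ends are $C_8,C'_8$.) If $b$ really lay in the normal closure of the meridians of the $A_k$, then, since these die in the orbifold group, you would get $\pi_1^{\orb}(X')=1$. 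The paper explicitly leaves that open, so this needs a real argument. In Step~2, the cross-family commutation, which you rightly call the crux, is only hoped for. Lemma~\ref{lem:trivialization} with $\gcd(\Sigma^2,m_\Sigma)=1$ merely places $\delta_\Sigma$ in the subgroup generated by commutators of push-offs of loops on $\Sigma$. Nothing says those push-offs commute with meridians of other families or with $a,a',b$, and Seifert--Van Kampen with $\pi_1(X)=1$ says relations exist, not which ones. The ``short check'' for cyclicity in Step~3 also hides content, since a priori $H_1(\bar X^0)$ is a quotient of $\ZZ_2\oplus\ZZ_2\oplus\ZZ_{18}$.

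The paper never needs cross-family commutation. It kills every generator except $b$ using explicit punctured surfaces in $X^*$, and only the within-family commutativity of Corollary~\ref{cor:meridians-commute}. First, the three vanishing cycles of a fiber give $\beta=\alpha^2$, $\alpha\beta\,a=1$ and $\alpha\beta^{-2}a'=1$, hence $a=a'=\alpha^3$. Second, the sections $E_2,E_3$ are spheres avoiding $D,D',T_n,T'_m$. They meet the chain in $C_4,C'_4$ (meridians $b^5,b^{13}$), resp.\ $C_7,C'_7$ (meridians $b^2,b^{16}$), and meet $A_k$ in $12k$, resp.\ $6k$ points. With $b^{18}=1$ this gives $6\sum_j j\,\delta_{A_j}=0$. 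Since $6k\prod_{j\neq k}m_{A_j}$ is coprime to $m_{A_k}$, each $\delta_{A_k}=1$. Third, the torus $T$, punctured at its $3$ points on $D$ and $9n$ points on each $T_n$, gives $a^3c_1^9\cdots c_{N_1}^{9N_1}=[\alpha\beta^{-2},s]$, whence $sas^{-1}=\prod_n c_n^{9n}$. So $a$ has odd order while $a^2=1$, hence $a=1$, and then each $c_n=1$ by the same coprimality trick; likewise with $T'$. This torus relation is the correct form of your Step~1 intuition for $a$. What remains is generated by $b$ with $b^{18}=1$, so the group is cyclic, and $b$ is never killed. Your final step is fine; in fact abelianity of $\pi_1^{\orb}(X')$ together with $H_1(M,\ZZ)=0$ already forces $\pi_1(M)=1$.
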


\begin{proof}
We will see that all the loops $a,a',b,\d_{T_n}, \d_{T'_m}, \d_{A_k}$, except possibly $b$, are trivial in $\pi_1(X^*)$ once the extra relations on  the orbifold fundamental group (\ref{eq:meridians}) are added.

\noindent \textbf{Step 1: relations coming from the fiber.}
First fix a smooth fiber $F_0$ of the elliptic fibration $X \to \CP^1$. Recall that:
\begin{itemize}
\item The vanishing cycles in $F_0$ are $(1,1)$, $(1,-2)$, $(2,-1)$. The cycle $(1,1)$ was used to construct $D$ and $T'$, while the cycle $(1,-2)$ was used to construct $D'$ and $T$. The cycle $(2,-1)$ has not been used for constructing the surfaces.
\medskip
\item The tori $T$ and $T'$ have also horizontal components $S^1(r), S^1(r')$, given by parallel loops going around the fiber substracted when doing the connected sum $X= S_1 \#_{F_1 \cong F_2} S_2$. These project to loops in $\CP^1$.
\end{itemize}
Let $\alpha,\beta \in \pi_1(F_0)$ be the standard generators of the fundamental group of the fiber $F_0$ of $X$. 

\noindent \textbf{Vanishing cycle $(2,-1)$:} we move the fiber $F_0$ so that it lies over a point  in $\CP^1$ away from images of the horizontal loops of $T$ and $T'$. 
In this way, the third vanishing cycle $(2,-1)=\a^2 \b^{-1}$ of $F_0$ contracts without touching a neighborhood of $T \cup D$ and $T' \cup D'$, because the homotopy involves a path in the base going to the third nodal fiber, and this path is disjoint from the paths associated to $D$ and $D'$. The homotopy does not touch a neighborhood of the chain $\cC$ either, because the section $E_1$ does not intersect the nodal curves at the nodes, hence $\a^2 \b^{-1}$ can be contracted in a small neighborhood of the node without touching $E_1$. We conclude that $\alpha^2\beta^{-1}=1$ in $\pi_1(X^*)$, so $\beta=\alpha^2$.

\noindent \textbf{Vanishing cycle $(1,1)$:} we can null-homotop the cycle $\a \b$ via a small perturbation of a hemisphere of $D$, i.e. via a $(-1)$-disc which intersects $D$ transversely (and negatively) at one point. This gives a relation $\a \b \, a=1$ in $\pi_1(X^*)$. Recall that $a=a^{-1}$, so we can ignore the orientation of $a$. This, together with the previous relation $\b=\a^2$, yields $\a \b=\a^3=a$.

\noindent \textbf{Vanishing cycle $(1,-2)$:} with an analogous reasoning we get a homotopy from $\a \b^{-2}$ to the constant loop via a perturbed hemisphere of $D'$. This homotopy intersects $D'$ transversely at one point, so we have $\a \b^{-2} a'=1$ in $\pi_1(X^*)$. Adding the relations from the other two cycles we get 
\begin{equation} \label{eq:relation-vanishing-discs}
a'=\a \b^{-2}=\a^{-3}=a^{-1}=a=\a \b=\a^{3} \, .
\end{equation}

\noindent \textbf{Step 2: loops $\d_{A_k}$ around $A_k$.}
Next, take the surface $A$ which lies in a neighbourhood of $F\cup \cC$, and has genus $10$, and self-intersection
 $A^2=18$. The surface $A$ lies in a neighbourhood of $F\cup \cC$, and has genus $10$, and self-intersection
 $A^2=18$. The surface $A$ has homology class
 \[
 [A] = [2F + 9E_1+ 8(C_1+C'_1)+ \dots + 2(C_7+C'_7)+(C_8+C'_8)] \, .
 \]
 Consider the sections $E_2, E_3$, which are $(-2)$-spheres. The section $E_2$ intersects the curves $C_4, C'_4$ and $F$ in one point, while $E_3$ intersects $C_7, C'_7$ and $F$.
  Therefore we have the intersection products
 \[
 E_2 \cdot A_k=k E_2 \cdot A=12k \ , \qquad E_3 \cdot A_k=k E_3 \cdot A=6k \, .
 \]
 On the other hand, the meridians around $C_4, C'_4$ are $b^5, b^{13}$ respectively, while the meridians around $C_7, C'_7$ are $b^2, b^{16}$, by Lemma \ref{lem:resolution-cyclic}. 
 Since $E_2, E_3$ are spheres disjoint from $D, D', T_n, T'_m$, we have the relations
\[
 b^5 \cdot b^{13} \prod_{j=1}^{K_0} \d_{A_j}^{12j} =1 \ , \qquad
b^2 \cdot b^{16} \prod_{j=1}^{K_0} \d_{A_j}^{6j}=1\ ,
 \]
in $\pi_1(X^*)$. Since $b^{18}=1$, and the meridians $\d_{A_j}$ commute by Corollary \ref{cor:meridians-commute}, we deduce, 
swapping to additive notation,
that $6 \sum_j j \d_{A_j}=0$. Fix $k$, multiply by $\prod_{j \ne k} m_{A_j}$, and use the orbifold relations to obtain
\[
6 k (\prod_{j \ne k} m_{A_j}) \d_{A_k}=0\ , \qquad
m_{A_k} \d_{A_k}=0.
\]
As $6 k \prod_{j \ne k} m_{A_j}$ and $m_{A_k}=\prod_{n,m} p^3_{nmk}$ are coprime, we get 
that $\d_{A_k}=0$.
Recall that this argument is also valid when $K_0=1$, i.e.\ when we have only the surface $A$, as in the paper \cite{Mu-jems}.

 \noindent \textbf{Step 3: loops $a$ around $D$ and $c_n$ around $T_n$.}
Take the surfaces $T_n$ lying  in a neighbourhood of 
$T\cup D$. Let us denote $c_n=\d_{T_n}$ a meridian around $T_n$, and $m_n=m_{T_n}$ its isotropy coefficient, $1 \le n \le N_1$; recall
that $a$ is a small loop around $D$, and $a^2=1$. We know that $[c_n,c_j]=1$ in $\pi_1(X^*)$ for all $n,j$, by Corollary \ref{cor:meridians-commute}. 

Recall that $T=s \times (\a \b^{-2})$, where $s=S^1(r)$ is a horizontal circle around the fiber substracted when doing the fiber connected sum. The intersection product $T \cdot T_n= 9n$, and $T \cdot D=3$. We have that
 \[
 T^*=T \cap X^* = T - ((T \cap D) \cup_n (T \cap T_n))
 \]
 is a torus with $3+ \sum\limits_{n=1}^{N_1} 9n$ punctures, giving a relation 
  \begin{equation} \label{eq:relation}
  a^3 c_1^9c_2^{18}\ldots c_{N_1}^{9N_1}=[\alpha\beta^{-2} , s] \quad \text{ in } \pi_1(X^*) \, .
 \end{equation}
We saw above that $\a \b^{-2}=a$, and $a^2=1$, so we deduce
\[
a c_1^9\ldots c_{N_1}^{9N_1}=a s a s^{-1} \implies s a s^{-1}= c_1^9\ldots c_{N_1}^{9N_1} \, .
\]
The loops $c_1, \dots , c_{N_1}$ commute, and by the orbifold relations each satisfies $c_j^{m_j}=1$ with $m_j=m_{T_j}$ an odd number. If we write $m=\prod_j m_j$, then $sa^m s¨^{-1}=(s a s^{-1})^m=(c_1^9\cdots c_{N_1}^{9N_1})^m=1$, so we get $a^m=1$ for an odd number $m$. Since $a^2=1$, this gives $a=1$ in $\pi_1^{\orb}(X')$. Therefore we have $c_1^9 \cdots c_{N_1}^{9N_1}=1$. Since the $c_j$ commute, we use additive notation, so we have $9 \sum_j j \,c_j=0$. If we fix $n$ and multiply by $\prod_{j \ne n} m_j$ we get
$$
9n (\prod_{j \ne n} m_j)  \, c_n=0\ , \qquad
 m_n \, c_n =0, 
$$
where the second equation is the orbifold relation.
Now, by the choice of the primes $p_{nmk}$ we have that $m_n$ is coprime with $9n \prod_{j \ne n} m_j$, so we deduce that $c_n$ is trivial in $\pi_1^{\orb}(X')$ for all $1 \le n \le N_1$.

\noindent \textbf{Step 4: loops $a'$ around $D'$ and $c'_m$ around $T'_m$.} This time we use $T'=s' \times (\a \b)$, being $s'=S^1(r')$ a parallel copy of $S^1(r)$.
The intersection numbers are as before, so we have a relation 
 \[
  a' c_1'{}^9\ldots c_{N_2}'{}^{9N_2}=[\alpha\beta , s']=[a', s'] \quad \text{ in } \pi_1(X^*) \, .
\]
On the other hand, by \eqref{eq:relation-vanishing-discs} we have $a'=a=1$. Now, as the loops $c'_j$ commute, using additive notation we get the same relation as before: $9 \sum_j j\, c'_j=0$, and it follows by the same argument that all $c'_j$ are trivial in $\pi_1^{\orb}(X')$.

To sum up, we have proved that the generators $a,c_n, a', c'_j, \d_{A_k}$ of $\pi_1^{\orb}(X')$ vanish, except possibly the loop $b$, so $\pi_1^{\orb}(X')$ is generated by $b$ with $b^{18}=1$, hence it is a finite cyclic group of order at most 18.
Once proved that $\pi^{\orb}_1(X)$ is abelian, we get that $\pi_1(M)$ is also abelian using the exact sequence on homotopy groups
\[
\ZZ \to \pi_1(M) \to \pi_1^{\orb}(X) \to 1
\]
coming from the Seifert circle bundle $M \to X$. Finally, using that $H_1(M,\ZZ)=0$ from Theorem \ref{thm:K-contact}, we get that $M$ is simply connected.
\end{proof}

\begin{remark}
It is easy to show that in fact $b^3=1$ in $\pi_1^{\orb}(X')$, so the orbifold fundamental group has order at most three. Indeed, the loop $s=s'$ representing $S^1(r)$ can be homotoped to $b^5$ inside the section $E_2$, and can also be homotoped to $b^2$ inside the section $E_3$, so $s=b^5=b^2$ and $b^3=1$.
\end{remark}

\section{Non-existence of Sasakian structure}

Now we aim to see that the $K$-contact manifolds $M$ constructed in Theorem \ref{th:main-0} do not admit any Sasakian structure. In order to do this, we will study properties of the Kahler cyclic orbifolds that would appear as spaces of leaves of a tentative Sasakian structure, and finally we will derive a 
contradiction. 

If $M$ admits a Sasakian structure, then it admits a quasi-regular one by \cite{Rukimbira}. So $M$ is the
total space of a Seifert bundle 
 \begin{equation} \label{eqn:MtoY}
 \varpi: M\too Y
 \end{equation}
over a cyclic Kähler orbifold $Y$. The
homology of $M$ is given by (\ref{eqn:H2MZ}). Using
Proposition \ref{thm:16MRT}, applied to
(\ref{eqn:MtoY}) we have that $b_1(Y)=0$ and $b_2(Y)=3$.
 The (finite) set of singular points of $Y$ is denoted by $P$. Also, there are complex curves $D_i$
forming the (complex) codimension-1 isotropy strata of $Y$, with some
multiplicity $m_i$. This family of curves is nice according to Definition \ref{def:nice}. Looking at the 
torsion in (\ref{eqn:H2MZ}), we see that for
each prime $p_{nmk}$, there is a triplet of
surfaces 
 \begin{equation} \label{eqn:enmk}
 \e^{nmk}=\left(D_1=D_1^{nmk}, D_2=D_2^{nmk}, D_3=D_3^{nmk}\right) ,
 \end{equation}
whose isotropy coefficients $m_1,m_2,m_3$ satisfy
that $p_{nmk}\parallel m_1$, $p_{nmk}^2\parallel  m_2$, $p_{nmk}^3\parallel m_3$, where the sign 
$\parallel$  means the maximal prime power dividing an integer. The genera are also recovered from the torsion of $M$, and they are given by $g(D_1)=9n^2+1$, $g(D_2)=9m^2+1$, $g(D_3)=9k^2+1$.

We note that it may happen that curve $D_i$
appears in several of the triplets $\e^{nmk}$,
depending on the prime factors of the 
isotropy coefficient $m_i$.
By (2) in Proposition \ref{thm:16MRT}, 
the three curves in (\ref{eqn:enmk}) are
disjoint curves and generate the rational
homology $H_2(Y,\QQ)$. 

We will abbreviate this by saying that $\e=(D_1,D_2,D_3)$ is an \emph{orthogonal basis}, or simply a \emph{basis} if the context is clear.
The existence of such a basis implies that $b^{1,1}(Y)=b_2(Y)=3$. By the Riemann-Hodge bilinear relations for orbifolds, one of these curves has positive self-intersection, and the other two have negative self-intersection. We recall now some terminology and results from \cite{Mu-jems}.

\begin{proposition} \cite{Mu-jems} \label{prop:effective-divisor} 
Let $K_Y$ the canonical divisor of $Y$, and $\e=(D_1,D_2,D_3)$ be an orthogonal basis of positive genera $g_1,g_2,g_3$. The $\QQ$-divisors $K_Y+D_i$, $K_Y+D_1+D_2+D_3$ are effective (and non-zero).
\end{proposition}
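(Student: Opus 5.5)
The plan is to prove effectivity via Riemann--Roch on the (orbifold) desingularization, after first extracting the numerical data available from the orthogonal basis. Write $D_i^2=d_i$ (rational numbers). Since $\e=(D_1,D_2,D_3)$ is an orthogonal basis of $H_2(Y,\QQ)$, we can expand
\[
K_Y=\sum_{i=1}^3 \frac{K_Y\cdot D_i}{d_i}\,D_i ,
\]
and by the orbifold adjunction formula (up to a rational correction coming from singular points that $D_i$ may pass through), $K_Y\cdot D_i = 2g_i-2-d_i+\text{(correction)}$. Because $g_i\geq 1$, this gives lower bounds on the coefficients of $K_Y+D_i$ in the basis. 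The goal is to reduce the statement to showing that $h^0(Y, m(K_Y+D_i))>0$ for some $m\geq 1$ (effectivity as a $\QQ$-divisor).

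The first key step is the exact sequence
\[
0\to \cO_Y(K_Y)\to \cO_Y(K_Y+D_i)\to \cO_{D_i}(K_{D_i}^{\orb})\to 0 ,
\]
where the right-hand term is the (orbifold) adjunction restriction. Since $b_1(Y)=0$, we have $H^1(Y,\cO_Y(K_Y))\cong H^1(Y,\cO_Y)^*=0$, so $H^0(K_Y+D_i)\to H^0(D_i,K_{D_i})$ is surjective. As $g_i\geq 1$, the target has dimension $g_i>0$, hence $K_Y+D_i$ has a nonzero section. To make this rigorous at cyclic quotient singularities, I would work with the minimal resolution $\tilde Y\to Y$ and the strict transforms $\tilde D_i$, or with reflexive sheaves on the normal surface $Y$ using Kawamata--Viehweg vanishing. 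The main obstacle is exactly here: $K_Y+D_i$ may fail to be Cartier, and adjunction acquires the different. I would first try to pass to a multiple $\ell(K_Y+D_i)$ with $\ell$ the index, or argue on $\tilde Y$, where $K_{\tilde Y}+\tilde D_i$ is effective by the same sequence (since $q(\tilde Y)=0$ and $g(\tilde D_i)=g_i\geq 1$), and then push forward: $\pi_*$ of an effective divisor is effective and $\pi_*(K_{\tilde Y}+\tilde D_i)=K_Y+D_i$.

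For $K_Y+D_1+D_2+D_3$, since the $D_i$ are disjoint, the same sequence with $D=D_1+D_2+D_3$ gives a surjection onto $\bigoplus_i H^0(D_i,K_{D_i})$, which is nonzero; alternatively, one adds the effective divisors $D_j$ to $K_Y+D_i$. Non-vanishing of the divisor class follows since a nonzero section of $\cO(K_Y+D_i)$ restricts nontrivially to $D_i$: if $K_Y+D_i\equiv 0$, then $(K_Y+D_i)\cdot D_i=0$, contradicting $(K_Y+D_i)\cdot D_i=2g_i-2+\text{(nonnegative correction)}>0$ when $g_i\geq 2$. In our setting $g_i=9n^2+1\geq 10$, and in general one can use the section restricting to a nonzero form to conclude non-triviality.
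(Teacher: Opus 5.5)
Your effectivity argument is correct and is the standard one behind the cited result. Pass to the minimal resolution $\tilde Y$ and use $H^1(\tilde Y,K_{\tilde Y})\cong H^1(\tilde Y,\cO_{\tilde Y})^*=0$ (since $b_1=0$) to get $h^0(K_{\tilde Y}+\tilde D_i)\ge g_i>0$. Then push forward. The preliminary expansion of $K_Y$ in the basis is not needed. The case $K_Y+D_1+D_2+D_3=(K_Y+D_i)+\sum_{j\neq i}D_j$ is then immediate.

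The one soft spot is your closing remark on non-vanishing in general. When $g_i=1$, a nonzero holomorphic $1$-form on $\tilde D_i$ has no zeros, so the section need not give a nonzero divisor on $Y$; think of $K_{\CP^2}+D=0$ for a smooth plane cubic $D$. If moreover $D_i$ avoids $P$, then $(K_Y+D_i)\cdot D_i=0$, so adjunction alone gives nothing.

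The fix uses the basis. Suppose $K_Y+D_i\equiv 0$. Then $K_Y\cdot D_j=-D_i\cdot D_j=0$ for $j\neq i$, so adjunction gives $D_j^2=2g_j-2+\sum_p(1-\tfrac1{d_p})\ge 0$. Since $D_j^2\neq 0$ in an orthogonal basis, $D_j^2>0$ for both $j\neq i$. This contradicts the fact that exactly one curve of the basis is positive.

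In the paper's applications $g_i\ge 10$, so your $g_i\ge 2$ argument already covers what is used there.
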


From the adjunction formula for symplectic suborbifolds it follows that for any nice curve $D$ (see remark after Definition \ref{def:nice}) we have 
\[
K_Y \cdot D + D^2=-e^{\orb}(D)=2g(D)-2+\sum_p (1-\tfrac{1}{d_p}) \, .
\]
Where $e^{\orb}(D)$ denotes the orbifold Euler characteristic of $D$, which is a $2$-orbifold with conical singularities $p$ of order $d_p$.
If we consider an orthogonal basis $\e=(D_1,D_2, D_3)$, and express $K_Y=\sum \a_i D_i$, then 
\[
\a_i=\frac{K_Y \cdot D_i}{D_i^2}=\frac{-e^{\orb}(D_i)-D_i^2}{D_i^2}=\frac{\chi_i-D_i^2}{D_i^2} \, .
\]
Assume that $D_1$ is the positive curve and denote $D_1^2=m_1$, $D_2^2=-m_2$, $D_3^2=-m_3$, with $m_i>0$. Denote also $\chi_i=-e^{\orb}(D_i)$.

\begin{corollary} \label{cor:m1-le-chi1}
Let $\e=(D_1,D_2,D_3)$ be a basis of positive genera $g_1,g_2,g_3$, and assume that $D_1$ is the positive curve. Then 
\[
D_1^2=m_1<\chi_1=-e^{\orb}(D_1)\, .
\]
\end{corollary}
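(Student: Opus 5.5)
We will read off the inequality from the coefficient $\a_1$ of $K_Y$ in the orthogonal basis, combined with the effectivity statement of Proposition \ref{prop:effective-divisor}. Write $K_Y=\a_1D_1+\a_2D_2+\a_3D_3$ in $H_2(Y,\QQ)$. Since the $D_i$ are pairwise disjoint, intersecting with $D_1$ gives $\a_1=\frac{K_Y\cdot D_1}{D_1^2}=\frac{\chi_1-m_1}{m_1}$, using the orbifold adjunction formula $K_Y\cdot D_1+D_1^2=\chi_1$ stated just before the corollary. So $m_1<\chi_1$ is equivalent to $\a_1>0$, and $\a_1>0$ is what we must prove.

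Next use that $K_Y+D_2$ (or $K_Y+D_3$) is a non-zero effective $\QQ$-divisor, by Proposition \ref{prop:effective-divisor}. The plan is to pair such an effective divisor with a Kähler class. On $Y$ we have a Kähler orbifold form $\o$, and $D_1$ is the only positive class. Since $b^{1,1}(Y)=3$ with signature $(1,2)$, the Kähler class $[\o]$ lies in the positive cone. As $[\o]\cdot D_2>0$ and $[\o]\cdot D_3>0$ (complex curves have positive area) while $D_2,D_3$ are orthogonal to $D_1$, the decomposition $[\o]=\l_1D_1+\l_2D_2+\l_3D_3$ has $\l_2=[\o]\cdot D_2/D_2^2<0$ and $\l_3<0$. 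Positivity of $[\o]^2=\l_1^2m_1-\l_2^2m_2-\l_3^2m_3>0$ together with $[\o]\cdot D_1=\l_1m_1>0$ gives $\l_1>0$.

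Instead of a Kähler class, it is simpler to intersect the effective divisor $E=K_Y+D_2+D_3$ with $D_1$ itself. The class $D_1$ is nef: it is an irreducible curve with $D_1^2>0$, so $D_1\cdot C\ge 0$ for any curve $C\neq D_1$, and $D_1\cdot D_1>0$. Hence $D_1\cdot E\ge0$. Moreover $D_1\cdot E=D_1\cdot K_Y=\a_1m_1$. This gives $\a_1\ge 0$, i.e.\ $m_1\le\chi_1$.

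The main obstacle is to get the strict inequality. If $\a_1=0$, then $E$ is effective, non-zero and orthogonal to the nef and big class $D_1$. By the Hodge index theorem for the orbifold (Hodge--Riemann), $E^2<0$ unless $E=0$ in rational homology. Also $E=\a_2D_2+\a_3D_3+D_2+D_3$ lies in the negative definite span of $D_2,D_3$. I would try to derive a contradiction by writing $E=\sum e_jC_j$ with $e_j>0$: each $C_j$ satisfies $D_1\cdot C_j=0$, hence $C_j$ is disjoint from $D_1$. One must then exclude $E\cdot D_1=0$ with $E$ effective. A cleaner route is to use instead $K_Y+D_1$ effective and non-zero, and to intersect with the Kähler class. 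That only yields $\o\cdot K_Y>-\o\cdot D_1$, which is weaker, so I would return to the argument above. I would expect the precise argument in the paper (or in \cite{Mu-jems}) to show that the $C_j$ are disjoint from $D_1$ and then reach a contradiction via the adjunction coefficients $\a_2,\a_3$. This last step, ruling out the equality case $\a_1=0$, is the one I expect to need the most care.
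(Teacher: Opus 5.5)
\textbf{Gap: the strict inequality is not proved.} Your nefness argument is correct, but it only yields $m_1\le\chi_1$. The equality case $m_1=\chi_1$ is exactly what the strict inequality asserts, and you leave it open. The obstruction is your choice of test class. Pairing with $D_1$ kills the $D_2$- and $D_3$-components, since the basis is orthogonal, so it can never see more than the sign of $\alpha_1$. The ingredient you never use is the sign of the other two coefficients, which comes from the positivity of the genera: $g_2,g_3\ge1$ gives $\chi_2,\chi_3\ge0$. A further problem is that $E=K_Y+D_2+D_3$ is not among the divisors that Proposition \ref{prop:effective-divisor} declares non-zero. Moreover its $D_3$-coefficient is $\alpha_3+1=-\chi_3/m_3$, which vanishes when $\chi_3=0$. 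So in the equality case you cannot even guarantee $E\neq 0$.

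\textbf{The fix.} Use $K_Y+D_2$ instead and expand it fully in the orthogonal basis via adjunction:
\[
K_Y+D_2=\frac{\chi_1-m_1}{m_1}D_1-\frac{\chi_2}{m_2}D_2-\frac{\chi_3+m_3}{m_3}D_3 .
\]
The $D_2$-coefficient is $\le0$. The $D_3$-coefficient, $\alpha_3=-\chi_3/m_3-1$, is strictly negative whatever $\chi_3$ is. If $m_1\ge\chi_1$, all three coefficients are $\le 0$ and the last is $<0$, so $-(K_Y+D_2)$ is a non-zero non-negative combination of the complex curves $D_i$.

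Now pair with a K\"ahler class $[\omega]$, which is positive on each $D_i$. This gives $[\omega]\cdot(K_Y+D_2)<0$. On the other hand, a non-zero effective $\QQ$-divisor has positive area, which contradicts Proposition \ref{prop:effective-divisor}. This handles $m_1=\chi_1$ and $m_1>\chi_1$ at once and needs neither nefness nor the Hodge index theorem. This is the paper's argument.

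You already had the K\"ahler-class ingredient in your second paragraph. What was missing was applying it to $K_Y+D_2$ with the signs of all its coefficients, rather than to $K_Y+D_1$ or via $D_1$ alone.
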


\begin{proof}
Applying adjunction we get
\[
K_Y+D_2=\frac{\chi_1-m_1}{m_1}D_1-\frac{\chi_2}{m_2}D_2-\frac{\chi_3+m_3}{m_3}D_3\, ,
\]
and this divisor is anti-effective if $m_1 \ge \chi_1$, a contradiction with Proposition \ref{prop:effective-divisor}.
\end{proof}

\begin{definition}[Proj-equivalence]
Two bases $(D_1,D_2,D_3)$, $(D'_1,D'_2,D'_3)$ are proj-equivalent if, up to reordering the indices, there are scalars $\l_i>0$ so that $[D_i]=\l_i [D'_i] \in H_2(Y,\QQ)$.
\end{definition}

\begin{remark}
Suppose the bases $(D_1,D_2,D_3)$, $(D'_1,D'_2,D'_3)$ are proj-equivalent, and assume $D_2$, $D_3$, $D'_2$, $D'_3$ are the negative curves. Then $D_2=D'_2$ and $D_3=D'_3$. Indeed, if we had $D_2 \ne D'_2$, then the intersection $D_2 \cdot D'_2 \ge 0$ since both are irreducible complex curves. But $[D_2]=\l_2 [D'_2]$, so $D_2 \cdot  D'_2=\l_2 D'^2_2<0$, a contradiction.
\end{remark}

\begin{lemma} \label{lem:proj-equiv}
Let $\e=(D_1,D_2,D_3)$, $\e'=(D'_1,D'_2,D'_3)$, $\e''=(D''_1,D''_2,D''_3)$ three bases, and assume that they are ordered so that the first curve is the positive one. The following holds:
\begin{itemize}
    \item If $[D'_1]=\l_1 [D_1]$ for some $\l_1>0$ then $\e$ and $\e'$ are proj-equivalent, so in particular $D_2=D'_2$, $D_3=D'_3$.
    \medskip
    \item If $[D_3]$, $[D'_3]$, $[D''_3]$ are proportional in $H_2(Y,\QQ)$ then two of the bases $\e, \e', \e''$ are proj-equivalent.
\end{itemize}
\end{lemma}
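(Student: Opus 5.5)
The first item reduces to the Remark preceding the lemma, combined with the fact that the curves of a basis are pairwise disjoint and hence orthogonal for the intersection form on $H_2(Y,\QQ)$. The plan is to identify $D_2'$ and $D_3'$ with $D_2,D_3$ up to order, using negativity and irreducibility. Suppose $[D'_1]=\l_1[D_1]$ with $\l_1>0$. The classes $[D'_2],[D'_3]$ lie in the orthogonal complement $[D_1]^\perp=\la [D_2],[D_3]\ra_\QQ$, which is a negative definite plane.

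Next compare $D'_2$ with $D_2$ and $D_3$. If $D'_2$ is distinct from both, then as irreducible complex curves we have $D'_2\cdot D_2\ge 0$ and $D'_2\cdot D_3\ge 0$. Write $[D'_2]=x[D_2]+y[D_3]$. Then $D'_2\cdot D_2=-xm_2\ge 0$ and $D'_2\cdot D_3=-ym_3\ge0$, so $x,y\le 0$. This gives $D_2'^2=-x^2m_2-y^2m_3$, while also $D_2'^2=D'_2\cdot(x D_2+yD_3)=x(D'_2\cdot D_2)+y(D_2'\cdot D_3)\le 0$. That is consistent so far, so we need more. The extra input is that the class of an effective curve pairs nonnegatively with the positive curve $D_1$. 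Since $[D'_2]\perp [D_1]$ this gives no sign information, so I would instead use that $D_2'$ is effective and $D_2,D_3$ are effective. The effective classes $[D'_2]=x[D_2]+y[D_3]$ with $x,y\le0$ are anti-effective. Pairing with an ample (Kähler) class $h$, which exists since $Y$ is Kähler, gives $h\cdot D'_2\le 0$, contradicting $h\cdot D'_2>0$. Hence $D'_2\in\{D_2,D_3\}$, and likewise $D'_3$. Since $D'_2\ne D'_3$, up to reordering $D'_2=D_2$ and $D'_3=D_3$, so $\e,\e'$ are proj-equivalent with $\l_2=\l_3=1$.

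For the second item, assume $[D_3],[D'_3],[D''_3]$ are proportional. The Remark's argument shows that two proportional negative irreducible curves coincide: if they were distinct, their intersection would be $\ge0$, yet it equals $\l\cdot(\text{negative})<0$ when $\l>0$. When $\l<0$ the two classes are opposite multiples, contradicting the fact that both pair positively with a Kähler class. So $D_3=D'_3=D''_3$. The remaining classes then lie in the plane $[D_3]^\perp$, which has signature $(1,1)$. In each basis, this plane contains the orthogonal pair consisting of one positive and one negative class: $([D_1],[D_2])$, $([D'_1],[D'_2])$, $([D''_1],[D''_2])$. The negative curves $D_2,D'_2,D''_2$ are irreducible with negative square, and by the argument above, any two distinct ones have nonnegative intersection. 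I would now argue directly in the Lorentzian plane. Its null cone consists of two lines, and the effective cone within it contains the positive classes $[D_1]$, $[D_1']$, $[D_1'']$, all in the same component since they pair positively with $h$. A negative class orthogonal to a positive class in this plane is determined up to scalar by that positive class. Two distinct irreducible negative curves $C,C'$ in a Lorentzian plane with $C\cdot C'\ge0$ must lie on opposite sides of the positive cone. Consequently there are at most two rays of irreducible negative curves, and a third must coincide with one of them. By pigeonhole, two of $D_2,D'_2,D''_2$ are proportional, hence equal. Their orthogonal positive classes $D_1$ and $D_1'$ (say) are then proportional, and the first item gives proj-equivalence.

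The main obstacle is this Lorentzian-plane step: showing that among three irreducible negative curves in a signature $(1,1)$ plane, all with pairwise nonnegative intersection, two must be proportional. I would check it concretely by diagonalizing the form as $u^2-v^2$. Negative classes have $|v|>|u|$, and effective ones lie in the half-space $h>0$. Two negative vectors with nonnegative pairing then cannot lie in the same quarter-sector unless they are proportional; otherwise their pairing is negative.
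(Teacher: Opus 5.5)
Your proof is correct, and it follows essentially the same argument that the paper defers to Lemmas 5.2 and 5.3 of \cite{Mu-jems}: orthogonal complements in $H_2(Y,\QQ)$, nonnegativity of intersections of distinct irreducible curves, positivity against a Kähler class, and the sign analysis in the Lorentzian plane $[D_3]^\perp$. The only blemish is the clause ``unless they are proportional'' in your last paragraph, which is unnecessary: two negative vectors in the same component of the negative cone always pair strictly negatively, so two of $D_2,D'_2,D''_2$ lying in the same component are already forced to be the same curve.
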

\begin{proof}
See \cite[Lemmas 5.2 and 5.3]{Mu-jems}.
\end{proof}

\begin{definition}[Good basis] \label{def:good}
A basis $\e=(D_1,D_2,D_3)$ is called \emph{good} if none of its curves passes through any singular point of $Y$.
\end{definition}

\begin{remark}
For a good basis we have that $[D_i] \in H_2(Y,\ZZ)$, $D_i^2 \in \ZZ$, and $\chi_i=2g_i-2 \in \ZZ$.
\end{remark}

Denote by $P \subset Y$ the set of singular points of $Y$, which is a finite set. We have the following universal bound for the singular points, proved in \cite[eq.\ (11)]{Mu-jems}.

\begin{proposition}\label{prop:bound-singular}
Suppose that $Y$ has three bases $\e, \e', \e''$ of positive genera $g_i, g'_i, g''_i$, $i=1,2,3$, and with all the curves distinct (i.e.\ in total nine distinct curves). Then 
\[
\# P \le -6 + 4 \sum_{i=1}^3 (g_i+g'_i+g''_i).
\]
\end{proposition}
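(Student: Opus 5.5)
The plan is to get the bound from an orbifold Bogomolov--Miyaoka--Yau (BMY) inequality, and to use the three bases only to bound $K_Y^2$ from below in terms of the genera. Each singular point $p\in P$ with local group $G_p$ lowers the orbifold Euler characteristic by $1-1/|G_p|\ge \tfrac12$. So an upper bound on $\#P$ follows once we have two things: an upper bound on $e(Y)$, and a lower bound on the relevant orbifold Euler characteristic.

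First, the topological input. Since $b_1(Y)=0$ and $b_2(Y)=3$, we have $e(Y)=5$. Since $b^{1,1}=3$, the $(2,0)$-part vanishes, so $\chi(\mathcal O_Y)=1$. I would use the log version of the orbifold BMY inequality (Miyaoka--Kobayashi--Nakamura--Sakai type) for the pair $(Y,\Delta)$, with $\Delta=D_1+D_2+D_3$ for one of the bases, or for all nine curves simultaneously if that gives better constants. The pair $(Y,\Delta)$ is log canonical because the family is nice, and $K_Y+\Delta$ is effective by Proposition \ref{prop:effective-divisor}. The inequality reads
\[
(K_Y+\Delta)^2\le 3\Big(e(Y)-e(\Delta)-\sum_{p\in P}\big(1-\tfrac1{|G_p|}\big)\Big),
\]
where the singular points lying on $\Delta$ contribute suitably modified local terms. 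Rearranging gives
\[
\tfrac12\,\#P\le 5-e(\Delta)-\tfrac13 (K_Y+\Delta)^2 .
\]
Here $-e(\Delta)=\sum_i(2g_i-2)$, since the curves of one basis are disjoint. This is the source of the linear dependence on the genera.

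Second, I would bound $(K_Y+\Delta)^2$ from below. I would write $K_Y=\sum\alpha_iD_i$ with $\alpha_i=(\chi_i-D_i^2)/D_i^2$, so that
\[
(K_Y+\Delta)^2=\sum_i \frac{\chi_i^2}{D_i^2}=\frac{\chi_1^2}{m_1}-\frac{\chi_2^2}{m_2}-\frac{\chi_3^2}{m_3}.
\]
The positive term is fine, since $m_1<\chi_1$ by Corollary \ref{cor:m1-le-chi1}. The negative terms need a lower bound on $m_2,m_3$ relative to $\chi_2,\chi_3$, and this is where the three bases with nine distinct curves enter. A negative curve $D_2$ of $\varepsilon$ meets each curve of $\varepsilon'$ and $\varepsilon''$ nonnegatively, because they are distinct irreducible curves. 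Expanding $[D_2]$ in the basis $\varepsilon'$ then yields inequalities that, combined with $K_Y+D'_j$ effective (Proposition \ref{prop:effective-divisor}), bound $\chi_2^2/m_2$ by a linear expression in $\chi_2$ and the genera of the other bases. Summing these bounds over the three bases, or using the nine-curve boundary, should give the stated $-6+4\sum_i(g_i+g'_i+g''_i)$. The constant $-6$ should come from the $2g-2$ terms across the nine curves combined with $e(Y)=5$.

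The main obstacle is the second step: controlling the negative terms $\chi_j^2/m_j$ uniformly. Negative curves can a priori have self-intersection of tiny absolute value, which would make $K_Y^2$ very negative. If the intersection-positivity argument across the three bases does not immediately bound $m_j$ from below, I would instead avoid $K_Y^2$ altogether. In that case I would apply the log BMY inequality to $\Delta=$ all nine curves and use only that $K_Y+\Delta$ is effective and nef on the positive curves, aiming for $(K_Y+\Delta)^2\ge0$ directly. Then the bound reduces to $\tfrac12\#P\le 5+\sum_{\text{nine curves}}(2g-2)+(\text{intersection corrections})$. I would check that this rearranges to the stated inequality, with the intersection corrections among the nine curves absorbed by the factor $4$ instead of $2$.
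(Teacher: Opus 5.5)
There is a genuine gap: you have misidentified what the three bases are for.

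\textbf{Only the singular points off $\Delta$ are controlled.} The log Bogomolov--Miyaoka--Yau inequality for $(Y,\Delta)$ with \emph{reduced} boundary $\Delta=D_1+D_2+D_3$ is a statement about the open orbifold $Y\setminus\Delta$, so it involves only the singular points off $\Delta$. Points of $P\cap\Delta$ contribute nothing: your ``suitably modified local terms'' are zero. So the rearrangement to a bound on $\tfrac12\#P$ is false; you only bound $\#(P\setminus\Delta)$.

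\textbf{The missing idea.} The three bases are what cover $P$. A singular point of the cyclic orbifold $Y$ lies on at most two isotropy curves. Since the nine curves are distinct, these two curves belong to at most two of the bases. Hence every $p\in P$ avoids all curves of some basis, i.e.\ $P=\bigcup_{\e}(P\setminus\Delta_\e)$.

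\textbf{The per-basis bound.} For each piece no information on $(K_Y+\Delta_\e)^2$ is needed. The divisor $K_Y+\Delta_\e$ is effective by Proposition \ref{prop:effective-divisor}, and the pair is log canonical. So the BMY/Miyaoka inequality for pairs of nonnegative log Kodaira dimension gives $e_{\orb}(Y\setminus\Delta_\e)\ge 0$, that is
\[
\tfrac12\,\#(P\setminus\Delta_\e)\le \sum_{p\in P\setminus\Delta_\e}\Big(1-\tfrac{1}{|G_p|}\Big)\le e(Y\setminus\Delta_\e)=5-\sum_{i=1}^3(2-2g_i)=2(g_1+g_2+g_3)-1 .
\]
Adding the three bounds $4(g_1+g_2+g_3)-2$ gives exactly $-6+4\sum_i(g_i+g'_i+g''_i)$. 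The constant $-6$ is $3\cdot(-2)$, not a nine-curve Euler characteristic.

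\textbf{Why your second step cannot close.}
\begin{itemize}
\item Effectivity of $K_Y+\Delta$ does not give $(K_Y+\Delta)^2\ge 0$; your own expression $\chi_1^2/m_1-\chi_2^2/m_2-\chi_3^2/m_3$ can be negative.
\item The inequality in the form $(K_Y+\Delta)^2\le 3e_{\orb}$ requires $K_Y+\Delta$ to be nef. Otherwise only the positive part of its Zariski decomposition appears.
\item Nothing makes $\chi_j^2/m_j$ linear in the genus. Even for good curves, $m_j\ge 1$ only yields $\chi_j^2$, which is quadratic in $g_j$. Nonnegativity of intersections with the other bases gives no useful lower bound on $m_j$.
\end{itemize}

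\textbf{Why the nine-curve fallback fails.} We have
\[
e(Y\setminus\Delta_9)=5+\sum(2g-2)+\#(\text{double points of the nine curves}).
\]
Intersection numbers such as $D_1\cdot D'_1$ are not bounded by the genera, and they enter with the wrong sign. In addition, the singular points lying on the nine curves are again not counted.
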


\section{Explicit bounds for geometric quantities of $Y$} \label{sec:5}

Let $Y$ be the orbifold in (\ref{eqn:MtoY}), that 
is the base of a Seifert bundle for the manifold
$M$ in Theorem \ref{th:main-0} or in
Theorem \ref{thm:main-jems}. Recall that $Y$
is a cyclic orbifold, where the isotropy locus of the orbifold consists of isolated singular points $p \in P$, and some complex curves given
by the triples (\ref{eqn:enmk}) for 
$1\leq n\leq N_1$, $1\leq m\leq N_2$, 
$1\leq k\leq K_0$. We will only use the triples
$$
 \e^{nm}=\e^{nm1}, \quad 1\leq n,m\leq N,
 $$
for $N=\min(N_1,N_2)$. In this way, we handle
the K-contact manifolds from Theorem \ref{th:main-0} and Theorem \ref{thm:main-jems} simultaenously. 
Our objective is to produce a lower bound of $N$,
thus proving Theorem \ref{th:main-2}.

From the discussion above, we know that for any pair $(n,m)$ with $1 \le n,m \le N$ there exists a triple of disjoint curves $\e^{nm}=(D^{nm}_1, D_2^{nm}, D_3^{nm})$ representing a $\QQ$-basis for $H_2(Y,\QQ)$, i.e.\ an orthogonal basis. These curves have genera 
\[
g(D^{nm}_1)=9n^2+1 \, , g(D^{nm}_2)=9m^2+1 \, , g(D^{nm}_3)= 10.
\]
The complex curves $D_i^{nm}$ are codimension-one isotropy curves of $Y$. Since $Y$ is cyclic, these curves intersect nicely, and each singular point $p \in P$ is in at most two of the curves.
We may abbreviate this situation by saying that $Y$ has $N^2$ orthogonal bases.

In this section we will assume that our orbifold $Y$ has such many bases $\e^{nm}$, for $1 \le n,m \le N$, and we will deduce some consequences. Later on, for a suitable choice of $N$ large enough, this will lead to a contradiction, thus to non-existence of such an orbifold $Y$. We shall determine explicitly lower bounds for the value of $N$.

\subsection{Bound for the singular points}

\begin{proposition}\label{prop:five-bases}
Suppose that $Y$ has five bases $\e(i)$ for $1 \le i \le 5$, and the genera of the curves are $g_1(i), g_2(i), 10$, with $g_1(i), g_2(i)>10$ ten distinct numbers. Then at least three of the bases have all their curves distinct. 
In particular, Proposition \ref{prop:bound-singular} applies, and there is a bound for $\# P$.
\end{proposition}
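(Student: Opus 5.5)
The plan is to use the genera to show that almost all the curves are automatically distinct, so that the only possible coincidences are among the genus-$10$ curves. Those coincidences are then ruled out with Lemma \ref{lem:proj-equiv}. Write $\e(i)=(D_1(i),D_2(i),D_3(i))$ with $g(D_3(i))=10$.

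First, the ten curves $D_1(i),D_2(i)$, $1\le i\le 5$, are pairwise distinct. Two equal curves have equal genus, and by hypothesis the ten numbers $g_1(i),g_2(i)$ are distinct. None of these ten curves can equal any $D_3(j)$ either, since they have genus $>10$. So a coincidence among the fifteen curves can only be of the form $D_3(i)=D_3(j)$ with $i\ne j$. If the five curves $D_3(1),\dots,D_3(5)$ include at least three distinct ones, say $D_3(i_1),D_3(i_2),D_3(i_3)$, then the bases $\e(i_1),\e(i_2),\e(i_3)$ consist of nine distinct curves. Proposition \ref{prop:bound-singular} then applies and gives the bound on $\#P$.

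It remains to exclude the case where the $D_3(i)$ take at most two distinct values. By pigeonhole, some curve is then $D_3(i)$ for at least three indices. In particular $[D_3(i)]$ is the same class for three bases $\e,\e',\e''$. The second item of Lemma \ref{lem:proj-equiv} says that two of them, say $\e(i)$ and $\e(j)$ with $i\neq j$, are proj-equivalent. To apply it as stated, I order the curves so that the positive one comes first, but I keep track of which curve has genus $10$. By the remark following the definition of proj-equivalence, the two negative curves of $\e(i)$ coincide as curves with the two negative curves of $\e(j)$.

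Each basis has exactly one positive curve, so it has two negative curves. At least one of these has genus $>10$, since only one curve of the basis has genus $10$. That curve of $\e(i)$, of genus $g_1(i)$ or $g_2(i)$, then equals a curve of $\e(j)$ and so has the same genus. It cannot be $D_3(j)$, whose genus is $10$. So its genus is one of $g_1(j),g_2(j)$, which contradicts the assumption that the ten genera are distinct.

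The only delicate point is making sure Lemma \ref{lem:proj-equiv} and the remark on negative curves apply regardless of which position the genus-$10$ curve occupies, positive or negative. The argument above only uses that the two negative curves coincide and that a basis has at least one negative curve of genus $>10$, so it does not depend on that position.
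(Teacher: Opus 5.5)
Your proof is correct and follows the paper's argument. Genera force every coincidence to be among the genus-$10$ curves, pigeonhole gives three bases sharing one such curve, Lemma~\ref{lem:proj-equiv} makes two of them proj-equivalent, and the shared negative curves (one of which has genus $>10$) contradict the distinctness of the ten genera.

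One small correction to your last paragraph: the citation of the lemma does depend on the position of the genus-$10$ curve. The sign of $C^2$ is intrinsic to the shared curve $C$, so $C$ is positive in all three bases or negative in all three. In the positive case it is the first item of Lemma~\ref{lem:proj-equiv}, not the second, that gives proj-equivalence.
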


\begin{proof}
Suppose that among the five curves of genus $10$ in the bases $\e(i)$ there are only two distinct. Then three of the bases would have the same genus $10$ curve. By Lemma \ref{lem:proj-equiv} it follows that two of the bases $\e(i)$ are proj-equivalent. But this is not possible, since proj-equivalent bases always share two of the curves, and the choices of genera forbid this.
\end{proof}

\begin{corollary}
There exists a constant $\t_0$ so that if the orbifold $Y$ has $N^2$ orthogonal bases $\e^{nm}$ with $N \ge 11$ then $\# P \le \t_0$.
\end{corollary}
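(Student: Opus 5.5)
The plan is to reduce directly to Proposition \ref{prop:five-bases} and then to Proposition \ref{prop:bound-singular}, taking $\t_0$ to be the resulting explicit number. The genera that enter are bounded in terms of the first few bases only, so the constant does not depend on $N$.

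\textbf{Choosing the bases.} From the $N^2$ bases $\e^{nm}$ with $N\ge 11$, select five whose first two genera are ten distinct values, all exceeding $10$. The genera are $9n^2+1$ and $9m^2+1$. Since $n\mapsto 9n^2+1$ is injective, this amounts to choosing ten distinct indices in $\{1,\dots,N\}$. A natural choice is $\e(i)=\e^{n_i m_i}$ with $(n_i,m_i)=(i,i+5)$ for $i=1,\dots,5$, which uses the indices $1,\dots,10\le N$. Every genus involved is at least $9\cdot 1+1=10$, so the hypothesis $g_1(i),g_2(i)>10$ needs care: the index $1$ gives genus exactly $10$. To avoid this, shift the indices to $2,\dots,11$, taking $(n_i,m_i)=(i+1,i+6)$. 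This is exactly where $N\ge 11$ is used, and it gives genera $9j^2+1\ge 37>10$, all distinct. The third curve of each basis has genus $10$.

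\textbf{Applying the earlier results.} Proposition \ref{prop:five-bases} then gives three of these bases whose nine curves are pairwise distinct. (Its proof uses Lemma \ref{lem:proj-equiv}, together with the fact that proj-equivalent bases share their two negative curves, which is incompatible with the genera being distinct.) Proposition \ref{prop:bound-singular} then yields
\[
\# P \le -6+4\sum (g_i+g'_i+g''_i).
\]
The right-hand side is at most the sum over the worst choice of three of the five bases. With indices $2,\dots,11$, a crude bound sums all ten large genera together with three copies of $10$:
\[
\t_0=-6+4\Big(\sum_{j=2}^{11}(9j^2+1)+30\Big).
\]
Here $\sum_{j=2}^{11} j^2=505$, so the large genera contribute $4555$. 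One can sharpen this by taking only the three bases with the largest genera, namely $j=7,\dots,11$ for the $m$'s and $j=4,5,6$ for the matching $n$'s. The paper's value $\t_0=10\,290$ suggests that the intended computation is exactly this kind of count, possibly with a different selection of indices.

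\textbf{Main obstacle.} The only subtle point is the distinctness hypothesis in Proposition \ref{prop:five-bases}: the genera must exceed $10$ so that no genus-$10$ curve coincides with a first or second curve. The choice of indices handles this, and the rest is arithmetic.
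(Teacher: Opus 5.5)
Your proof is correct and is essentially the paper's. Like the paper, you choose five bases from the ten indices $\{2,\dots,11\}$ (the paper pairs them as $(2,3),(4,5),\dots,(10,11)$ instead of $(i+1,i+6)$), apply Proposition~\ref{prop:five-bases} to get three bases with nine distinct curves, and bound $\#P$ by Proposition~\ref{prop:bound-singular} using the worst such triple; the paper evaluates this as $\t_0=16\,374$. Two slips in your side remarks, neither affecting the proof: $10\,290$ is the later constant $\hat\t_0$, which comes from the bases $\e^{nmk}$ with varying $k$, not the constant of this corollary; and with your pairing the worst triple is $(4,9),(5,10),(6,11)$, giving $13\,782$.
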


\begin{proof}
We take $n,m$ ranging over the $10$ distinct values in $\{2,\dots,11\}$, and this gives $5$ bases $\e^{nm}$ with genera as in Proposition \ref{prop:five-bases}, hence we can apply Proposition \ref{prop:bound-singular}.
\end{proof}

\noindent \textbf{Computation of $\t_0$:}
We need five bases $\e(i)=(D_1(i), D_2(i), D_3(i))$ with genera $g_1(i)$, $g_2(i)$, $g_3(i)=10$, $1\leq i \leq 5$, so that all $g_1(i)$, $g_2(i)$ are distinct. We take:
\begin{itemize}
    \item $(n,m)=(2,3)$ gives $\e^{23}$ with genera $37,82,10$.
    \item $(n,m)=(4,5)$ gives $\e^{45}$ with genera $145,226,10$.
    \item $(n,m)=(6,7)$ gives $\e^{67}$ with genera $325,442,10$.
    \item $(n,m)=(8,9)$ gives $\e^{89}$ with genera $577,730,10$.
    \item $(n,m)=(10,11)$ gives $\e^{10 \, 11}$ with genera $901,1090,10$.
\end{itemize}
Among these $5$ bases, there are three of them with all the curves different. Then the number of singular points can be bounded by
Proposition \ref{prop:bound-singular} to be:
\begin{align*}
\# P \le & -6 + 4 \cdot (325+ 442+ 577+730 + 901+ 1090 +30) = 16\,374=\t_0 \, .
\end{align*}

\subsection{Existence of a good basis}
We aim to see that, if we assume the existence of sufficiently many bases $\e^{nm}$, then we can ensure that some of them are good (see
Definition \ref{def:good}).
Let us introduce a definition which will help in this process.

\begin{definition}[$\mu_0$-bounded basis]
We say that a basis is \emph{$\mu_0$-bounded} if it is of the form $\e^{nm}=(D_1^{nm},D_2^{nm}, D_3^{nm})$ with $2 \le m \le \mu_0$ and $n > \mu_0$. 
\end{definition}
The genera of the curves of a $\mu_0$-bounded basis are \[
g(D_1^{nm})=9n^2+1 >9 \mu_0^2+1 \ge g(D_2^{nm})=9m^2+1 >g(D_3^{nm})=10.
\]
Two of the curves are negative (i.e.\ have negative self-intersection), and the other one is positive. In principle, we do not know which one is the positive, but later on we will study this.
Note that the curves of such a basis may be not good (i.e. they may contain some singular points of $Y$). 
We aim to see that choosing a suitable bound $\mu_0$ ensures the existence of $\mu_0$-bounded and \emph{good} bases, while allowing us to gain some control over the genus of the curves $D_2$.

We give a method to construct suitable $\mu_0$-bounded basis.
For any $m \in [2,\mu_0]$ we choose some number $n_m \ge \mu_0+1$, so that the function $m \mapsto n_m$ is injective. For each pair $(n_m,m)$ consider the basis $\e(m)=\e^{n_m m}$, $2 \le m \le \mu_0$. We have in total $\mu_0-1$ such bases.

\begin{lemma} \label{lem:three-bases}
Any singular point $p \in P$ is in at most four of the bases $\e(m)$ constructed above.
\end{lemma}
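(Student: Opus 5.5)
The plan is a counting argument: each singular point lies on at most two isotropy curves, and each isotropy curve belongs to at most two of the bases $\e(m)$. Multiplying gives at most four bases.

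\textbf{Step 1: points lie on at most two curves.} Recall that $Y$ is a cyclic orbifold. The isotropy curves form a nice family in the sense of Definition \ref{def:nice}, so no three of them meet at a point. Hence a singular point $p\in P$ lies on at most two distinct isotropy curves, say $C$ and $C'$. If $p$ lies in a basis $\e(m)$, meaning it lies on some curve of that basis, then that curve must be $C$ or $C'$. It therefore suffices to show that each isotropy curve occurs as a member of at most two of the bases $\e(m)$, $2\le m\le \mu_0$.

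\textbf{Step 2: the high-genus curves.} The curves of $\e(m)=\e^{n_m m}$ have genera $9n_m^2+1$, $9m^2+1$ and $10$. The genera $9m^2+1$ with $2\le m\le\mu_0$ are pairwise distinct. Since $m\mapsto n_m$ is injective, the genera $9n_m^2+1$ with $n_m\ge\mu_0+1$ are also pairwise distinct. Moreover, these two families of genera are disjoint from each other (because $n_m>\mu_0\ge m$), and none of them equals $10$. A curve has a single genus, so a curve of genus $>10$ appears in at most one of the bases $\e(m)$.

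\textbf{Step 3: the genus-$10$ curves.} This is the only delicate point: a genus-$10$ curve $C$ could a priori be the third curve of many bases $\e(m)$. Suppose it were $D_3$ for three of them. Then the classes of the third curves of these three bases would be equal, hence proportional in $H_2(Y,\QQ)$. By the second item of Lemma \ref{lem:proj-equiv}, two of these bases would be proj-equivalent. Proj-equivalent bases share their two negative curves, by the remark preceding Lemma \ref{lem:proj-equiv}. So these two bases would have at least one curve of genus $>10$ in common, which contradicts Step 2. Hence each genus-$10$ curve lies in at most two bases.

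\textbf{Conclusion.} Combining the steps, $p$ lies on at most two curves $C,C'$, and each of them belongs to at most two bases $\e(m)$. Therefore $p$ lies in at most four of the bases $\e(m)$.
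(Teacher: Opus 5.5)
Your proof is correct and is essentially the paper's argument: at most two curves pass through $p$, the higher-genus curves have distinct genera, and Lemma \ref{lem:proj-equiv} is applied to a genus-$10$ curve shared by three bases. You organize it as a direct $2\times 2$ count rather than the paper's pigeonhole contradiction from five bases. One small precision: Lemma \ref{lem:proj-equiv} orders the positive curve first, so its second item only covers the case where the shared genus-$10$ curve is negative. If that curve is positive (a single curve has the same sign in every basis containing it), the first item gives the proj-equivalence instead; the paper's unqualified citation of the lemma covers both cases.
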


\begin{proof}
Suppose the contrary, and let $\e(m_i)$, with $1 \le i \le 5$ so that $p$ is in some of the curves of each $\e(m_i)$. Since $p$ cannot be in more than two curves, among these five curves containing $p$ there must be repetitions; and, as the genera of $D_1^{n_{m_i} m_i},D_2^{n_{m_i} m_i}$ are all distinct, it follows that among the curves $D_3^{n_{m_i} m_i}$ for $1 \le i \le 5$, there are only two different curves, so the same curve $D_3$ is in three of the bases, say $\e(m_1)$, $\e(m_2)$, $\e(m_3)$. By Lemma \ref{lem:proj-equiv}, at least two of these bases must be proj-equivalent, say $\e(m_1)$ and $\e(m_2)$. But any pair of proj-equivalent bases share two of the curves (the negative ones), and this contradicts the choice of genera of the curves for the bases $\e(m)$.
\end{proof}

Since there are at most $\t_0$ singular points in $P$, we deduce that at most $4 \t_0$ of the bases $\e(m)$ may not be good bases. Now let us choose the numbers $n_m$ the smallest possible, i.e.\ $n_m=\mu_0-1+m$, for $2 \le m \le \mu_0$. Let us define 
\[
N_1=\max \{n_m: m \in [2,\mu_0]\}=2 \mu_0-1 \, .
\]

\begin{corollary}
For any choice of $\mu_0 \ge 4 \t_0+2$, any Kähler orbifold $Y$ with bases $\e^{nm}$, for $2 \le n,m \le N_1=2 \mu_0-1$, has at least a good $\mu_0$-bounded basis.
\end{corollary}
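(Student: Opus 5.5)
The plan is a counting argument built from the preceding bounds. First, since $Y$ has the bases $\e^{nm}$ for all $2\le n,m\le N_1=2\mu_0-1$, and $N_1\ge 11$ whenever $\mu_0\ge 4\t_0+2$ (indeed $\t_0=16\,374$), the preceding corollary applies. It gives the universal bound $\#P\le \t_0$. More precisely, the five bases used to compute $\t_0$ have indices in $\{2,\dots,11\}\subset[2,N_1]$, so Proposition \ref{prop:five-bases} and Proposition \ref{prop:bound-singular} apply directly to $Y$.

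Next, consider the $\mu_0-1$ bases $\e(m)=\e^{n_m m}$ with $n_m=\mu_0-1+m$, for $2\le m\le\mu_0$.

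\begin{itemize}
\item Each of them is $\mu_0$-bounded, because $n_m\ge\mu_0+1>\mu_0$ and $2\le m\le \mu_0$.
\item Each of them is available in $Y$, because $n_m\le 2\mu_0-1=N_1$.
\item The map $m\mapsto n_m$ is injective.
\end{itemize}

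Injectivity is what makes Lemma \ref{lem:three-bases} apply. That lemma relies on two facts: all the genera $9n_m^2+1$ and $9m^2+1$ are pairwise distinct across the family, and the only shared genus is $10$. Both hold here. The values $n_m$ lie in $[\mu_0+1,2\mu_0-1]$ and the values $m$ lie in $[2,\mu_0]$, so these two ranges are disjoint and the $n_m$ are distinct. Consequently each singular point $p\in P$ lies on curves of at most four of the bases $\e(m)$.

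A basis fails to be good exactly when one of its curves passes through some point of $P$. Double counting over the points of $P$ therefore shows that the number of non-good bases among the $\e(m)$ is at most $4\#P\le 4\t_0$. Since $\mu_0-1\ge 4\t_0+1>4\t_0$, at least one basis $\e(m)$ is good, and it is $\mu_0$-bounded by construction, as required.

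The only step needing care is the first: checking that the hypotheses of the $\t_0$ bound are met inside the index range $[2,N_1]$. This amounts to verifying $N_1\ge 11$, which is immediate from $\mu_0\ge 4\t_0+2$. The rest is the pigeonhole count.
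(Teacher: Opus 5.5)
Your proposal is correct and matches the paper's argument: you take the $\mu_0-1$ bases $\e(m)=\e^{n_m m}$ with $n_m=\mu_0-1+m$, use Lemma \ref{lem:three-bases} together with $\#P\le\t_0$ to see that at most $4\t_0$ of them fail to be good, and conclude from $\mu_0-1>4\t_0$. Your explicit check that the five bases used to compute $\t_0$ have indices in $[2,N_1]$ is a detail the paper leaves implicit.
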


\begin{remark} \label{rem:5.6}
 In \cite{Mu-jems} the numbers $n_m$ are chosen to be prime, and this yields a number considerably bigger than the number $N_1=2 \mu_0-1=8 \t_0+3=130 \, 995$ given above. The choice of $n_m$ as prime numbers is unnecessary as far as the universal geometric bounds obtained in the next subsection \ref{subsec:geometric-bounds} are concerned. This number satisfies that any orbifold 
$Y$ with $N^2_1$ orthogonal bases $\e^{nm}$, $1 \le n,m \le N_1$, has at least a $\mu_0$-bounded good basis. 
All the results from \cite[Section 6]{Mu-jems} are also valid for orbifolds associated to $N_1=8 \t_0+3$.
\end{remark}

However, we point out that, although the minimal value of $N$ for which we are able to obtain the geometric bounds is 
$N=N_1=130 \, 995$, in order to prove the non-Sasakian property later on, we will need to choose the numbers $n_m$ to be in a certain (quite disperse) family of prime numbers. This choice shall increase considerably the number $N$.

\subsection{Geometric bounds for orbifolds with one $\mu_0$-bounded basis} \label{subsec:geometric-bounds}

Here we collect the results from \cite[Section 6]{Mu-jems}, with some modifications in terminology.
We assume that the orbifold $Y$ has:
\begin{itemize}
    \item at least $N_1^2$ orthogonal bases 
    $\e^{nm}$, $1 \le n,m \le N_1$, and
    \item at least a $\mu_0$-bounded good basis among them.
\end{itemize}  
As shown above, this assumption holds for any choice of $\mu_0 \ge 4 \t_0+2=65\,498$ and $N_1 \ge 2 \mu_0-1=130\,995$.
 
\begin{lemma} {\cite[Section 6]{Mu-jems}}\label{lem:universal-constants}
Denote $K_Y$ the canonical divisor of $Y$.
There exist universal constants:
\begin{itemize}
    \item $\t_1=(18N_1^2-1)^2$ and $\t_2=36N_1^2-1$ so that $-\t_2 \le K_Y^2 \le \t_1$.
    \item $\t_3=\t_2+12$ so that $e(\tilde Y) \le \t_3$ 
    \item $\t_4=4 \t_3-14$ so that any nice curve $C \subset Y$ with $C^2=-m<0$ satisfies $m \le 2g(C)+\t_4$.
\end{itemize}
\end{lemma}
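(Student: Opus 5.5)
The plan is to work throughout with a single good $\mu_0$-bounded basis $\e=(D_1,D_2,D_3)$, which exists by the preceding corollary. Because it is good, all the quantities are integers:
\begin{itemize}
\item $D_i^2\in\ZZ$;
\item $\chi_i=2g_i-2$;
\item $g_1=9n^2+1$ with $n\le N_1$, so $\chi_1=18n^2\le 18N_1^2$.
\end{itemize}
Write $D_1^2=m_1>0$, $D_2^2=-m_2$, $D_3^2=-m_3$. Since $\e$ is orthogonal,
\[
K_Y=\sum_i \tfrac{\chi_i-D_i^2}{D_i^2}D_i ,\qquad
K_Y^2=\frac{(\chi_1-m_1)^2}{m_1}-\frac{(\chi_2+m_2)^2}{m_2}-\frac{(\chi_3+m_3)^2}{m_3}.
\]

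\emph{Upper bound on $K_Y^2$.} The two negative terms can simply be dropped. For the positive term, Corollary \ref{cor:m1-le-chi1} gives $1\le m_1\le \chi_1-1$. On this range the function $t\mapsto(\chi_1-t)^2/t$ is decreasing, so its maximum is at $t=1$, namely $(\chi_1-1)^2$. Hence $K_Y^2\le(18N_1^2-1)^2=\t_1$.

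\emph{Lower bound on $K_Y^2$.} This cannot be read off the displayed formula, since $m_2,m_3$ are a priori unbounded. Instead I would use the effective divisors of Proposition \ref{prop:effective-divisor} together with the fact that $D_1$ is irreducible with $D_1^2>0$, hence nef.
\begin{itemize}
\item Pairing the effective divisors $K_Y+D_2$, $K_Y+D_3$ and $K_Y+D_1+D_2+D_3$ with $D_1$ gives nonnegative numbers, all bounded in terms of $\chi_1$.
\item Writing $K_Y^2=(K_Y+D_2+D_3)^2-2(K_Y+D_2+D_3)\cdot(D_2+D_3)+(D_2+D_3)^2$ and applying the Hodge index theorem with respect to $D_1$, I expect to control the negative part of $K_Y^2$ by an expression of size about $2\chi_1$.
\end{itemize}
This should yield $K_Y^2\ge-(2\cdot18N_1^2-1)=-\t_2$. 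This step is the main obstacle: I am not sure which combination of effective divisors produces exactly $36N_1^2-1$. The first thing I would try is the Hodge index inequality $(E\cdot D_1)^2\ge E^2D_1^2$ for the divisors $E=K_Y+D_j$.

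\emph{Euler characteristic bound $\t_3$.} Let $\tilde Y\to Y$ be the minimal resolution of the cyclic quotient singularities. Since $b_1=0$ and $b_2=b^{1,1}=3$, we have $q=p_g=0$, hence $\chi(\cO)=1$, and Noether's formula reads $e(\tilde Y)=12-K_{\tilde Y}^2$. For cyclic quotient singularities $K_{\tilde Y}=\pi^*K_Y-\sum a_jE_j$ with $0\le a_j<1$. I would bound the correction term $(\sum a_jE_j)^2$ so that $K_{\tilde Y}^2\ge-\t_2$ follows from the lower bound on $K_Y^2$; this gives $e(\tilde Y)\le\t_2+12=\t_3$.

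\emph{Negative curves, $\t_4$.} Let $C$ be nice with $C^2=-m<0$, and let $\tilde C$ be its proper transform in $\tilde Y$. Then:
\begin{itemize}
\item $b_2(\tilde Y)=e(\tilde Y)-2\le\t_3-2$, so the total number of exceptional curves, and hence the length of the chains $C$ can meet, is bounded by $\t_3$;
\item adjunction on $\tilde Y$ gives $\tilde C^2=2g(C)-2-K_{\tilde Y}\cdot\tilde C$, and $-K_{\tilde Y}\cdot\tilde C$ is controlled through a bound on $K_{\tilde Y}$ in terms of $e(\tilde Y)$;
\item comparing $C^2$ with $\tilde C^2$ through the orbifold adjunction correction terms $\sum_p(1-1/d_p)$, at most one for each singular point, together with the chain-length bound, should give $m\le 2g(C)+4\t_3-14=2g(C)+\t_4$.
\end{itemize}
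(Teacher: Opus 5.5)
Your upper bound $K_Y^2\le\tau_1$ is correct; since all three genera in a good $\mu_0$-bounded basis are at most $9N_1^2+1$, it does not matter which curve is positive. The lower bound is a genuine gap, and the tool you propose points the wrong way.

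\emph{Lower bound and Euler characteristic.} The Hodge index inequality $(E\cdot D_1)^2\ge E^2D_1^2$ only bounds $E^2$ from above. In the basis, a lower bound on $K_Y^2$ is equivalent to an upper bound on $m_2,m_3$, which no pairing with $D_1$ can detect, since $D_2,D_3\perp D_1$. Your passage to $\tilde Y$ is also backwards: $K_{\tilde Y}^2=K_Y^2+(\sum a_jE_j)^2\le K_Y^2$, so a lower bound on $K_Y^2$ gives nothing for $K_{\tilde Y}^2$. Bounding the correction term would need exactly the control of the singularities you only get afterwards; note also that $\tau_3=\tau_2+12$ leaves no room for a correction.

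The bound has to be proved on $\tilde Y$ directly, and $\tau_2=2\cdot 18N_1^2-1$ suggests how: show that $K_{\tilde Y}+\tilde D_1$ is nef, with $D_1$ the positive curve.
\begin{itemize}
\item Since $p_g=q=0$, the map $H^0(K_{\tilde Y}+\tilde D_1)\to H^0(K_{\tilde D_1})$ is an isomorphism. As $|K_{\tilde D_1}|$ is base point free, the fixed part of $|K_{\tilde Y}+\tilde D_1|$ is disjoint from $\tilde D_1$.
\item A curve $\Gamma$ with $(K_{\tilde Y}+\tilde D_1)\cdot\Gamma<0$ must lie in that fixed part. Hence $\Gamma\cdot\tilde D_1=0$, so $\Gamma^2<0$ by Hodge index, and $K_{\tilde Y}\cdot\Gamma<0$. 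So $\Gamma$ is a $(-1)$-curve, not $\pi$-exceptional.
\item Its image is a rational curve distinct from $D_2,D_3$ and orthogonal to $D_1$. So $[\pi(\Gamma)]=x_2[D_2]+x_3[D_3]$ with $x_2,x_3\le 0$, which contradicts positivity of a K\"ahler class on $\pi(\Gamma)$.
\end{itemize}
Therefore $K_{\tilde Y}^2=(K_{\tilde Y}+\tilde D_1)^2-2\chi_1+m_1\ge 1-2\chi_1\ge-\tau_2$. This gives both $K_Y^2\ge K_{\tilde Y}^2\ge-\tau_2$ and, by Noether, $e(\tilde Y)=12-K_{\tilde Y}^2\le\tau_3$.

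\emph{The constant $\tau_4$.} Your sketch has no mechanism that bounds $K_{\tilde Y}\cdot\tilde C$. Bounding $b_2(\tilde Y)$ and chain lengths is not enough, and cannot be, because you never use $g\ge 1$: Hirzebruch surfaces $\mathbb{F}_n$ have $e=4$ yet contain a $(-n)$-curve. The missing input is the logarithmic Bogomolov--Miyaoka--Yau inequality
$(K_{\tilde Y}+\tilde C)^2\le 3e(\tilde Y-\tilde C)=3e(\tilde Y)+6g-6$.
It applies because $K_{\tilde Y}+\tilde C$ is effective: $h^0\ge g-q=g\ge 1$. Substituting $K_{\tilde Y}\cdot\tilde C=2g-2-\tilde C^2$ and $K_{\tilde Y}^2=12-e(\tilde Y)$ gives $-\tilde C^2\le 4e(\tilde Y)+2g-14$. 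Finally $-C^2\le-\tilde C^2$, because $\pi^*C-\tilde C$ is effective and exceptional. This is precisely the origin of $\tau_4=4\tau_3-14$.
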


Up to now, we were not able to control which of the curves of a basis $\e^{nm}$ is the positive one. The following result ensures that it is the curve of greater genus for $n$ big enough.

\begin{lemma} \label{lem:n0}
Let $\t_5=(18\mu_0^2-1)^2-72$.
There is some universal number 
\[
n_0=\left \lfloor \sqrt{\frac{\t_2+\t_5}{72}} \right \rfloor +1 ,
\]
such that for any $\mu_0$-bounded good basis $\e^{nm}=(D_1^{nm},D_2^{nm}, D_3^{nm})$ with $n \ge n_0$, the curve $D_1^{nm}$ with genus $9n^2+1$ is the positive curve in $\e^{nm}$.
\end{lemma}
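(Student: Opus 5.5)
The plan is to argue by contradiction: assume the curve $D_1=D_1^{nm}$ of genus $9n^2+1$ is \emph{not} the positive curve of the good $\mu_0$-bounded basis $\e^{nm}$. We then compute $K_Y^2$ in this basis and compare the result with the universal lower bound $K_Y^2\ge -\t_2$ from Lemma \ref{lem:universal-constants}. Since the basis is good, all $D_i^2$ are integers and $\chi_i=2g_i-2$ are integers. Concretely, $\chi_1=18n^2$, $\chi_2=18m^2\le 18\mu_0^2$ and $\chi_3=18$.

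Write $K_Y=\sum\a_iD_i$ with $\a_i=(\chi_i-D_i^2)/D_i^2$, as derived before Corollary \ref{cor:m1-le-chi1}. By orthogonality of the basis,
\[
K_Y^2=\sum_{i=1}^3 \frac{(\chi_i-D_i^2)^2}{D_i^2}.
\]
We bound each term according to the sign of $D_i^2$.

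\textbf{Negative curves.} If $D_j^2=-m_j<0$, the term equals $-(\chi_j+m_j)^2/m_j$. By AM--GM, $(\chi_j+m_j)^2\ge 4\chi_j m_j$, so the term is at most $-4\chi_j$.

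\textbf{Positive curve.} Call the positive curve $D_i$, with $D_i^2=m_i$. By Corollary \ref{cor:m1-le-chi1}, $1\le m_i<\chi_i$, where $m_i\geq 1$ because $m_i$ is a positive integer. The function $m\mapsto(\chi_i-m)^2/m$ is decreasing on $(0,\chi_i)$, so it is maximised at $m=1$. Hence the term is at most $(\chi_i-1)^2$.

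Now suppose $D_1$ is negative, so the positive curve is $D_2$ or $D_3$. Its $\chi$ is at most $\max(18\mu_0^2,18)=18\mu_0^2$, so its term is at most $(18\mu_0^2-1)^2$. The term for $D_1$ is at most $-4\cdot 18n^2=-72n^2$. The remaining negative curve has genus at least $10$, hence $\chi\ge 18$, and its term is at most $-72$. Summing,
\[
K_Y^2\le (18\mu_0^2-1)^2-72-72n^2=\t_5-72n^2 .
\]
Combining with $K_Y^2\ge-\t_2$ gives $72n^2\le \t_2+\t_5$, that is, $n\le\sqrt{(\t_2+\t_5)/72}$. This contradicts $n\ge n_0=\lfloor\sqrt{(\t_2+\t_5)/72}\rfloor+1$, so $D_1$ must be the positive curve.

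The only delicate point is the estimate for the positive term. It requires the integrality $m_i\ge1$, which comes from goodness of the basis, together with the strict bound $m_i<\chi_i$ from Corollary \ref{cor:m1-le-chi1}. Also, the bound $-4\chi_j$ is used for both negative curves, whichever of $D_2,D_3$ is negative. Everything else is the direct computation above.
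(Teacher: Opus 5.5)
Your proposal is correct and follows the paper's own argument. Both assume the genus-$(9n^2+1)$ curve is negative, bound the positive term by its value at self-intersection $1$ (giving $(18\mu_0^2-1)^2$) and each negative term by $-4\chi_j$, and then compare the resulting $K_Y^2\le \tau_5-72n^2$ with $K_Y^2\ge-\tau_2$. You also treat the case where $D_3$ is positive explicitly, which the paper only calls ``similar''.
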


\begin{proof}
Denote $D_i^{nm}=D_i$ to ease notation, and $g_n=9n^2+1$, $g_m=9m^2+1$. Let us assume that $D_2$ is the positive curve. Then 
\begin{equation} \label{eqn:K2}
K^2=\frac{(2g_m-2-m_2)^2}{m_2}-\frac{(2g_n-2+m_1)^2}{m_1}-\frac{(18+m_3)^2}{m_3} \, .
\end{equation}
By Corollary \ref{cor:m1-le-chi1} we have a bound for the positive self-intersection: $1\le D_2^2=m_2< \chi_2=2g_m-2=18m^2 \le 18 \mu_0^2$.
On the other hand by Lemma \ref{lem:universal-constants} we have a bound for the negative self-intersections: $1 \le m_3 \le 20+\t_4$, and $1 \le m_1 \le 2g_n+\t_4$, and we also have the bound $-\t_2 \le K^2$. 

The function $f(x)=\frac{(a-x)^2}{x}$
has derivative $\frac{x^2-a^2}{x}$ so it is decreasing in $(0,a)$, therefore the
maximum of the first term of (\ref{eqn:K2})
is for $m_2=1$. On the other hand,
the function $f(x)=\frac{(a + x)^2}{x}$ has derivative $\frac{x^2-a^2}{x}$, so it is decreasing in $(0,a)$ and increasing in $(a,\infty)$, hence its minimum value is $f(a)=4a$.
This bounds the second and third terms
of (\ref{eqn:K2}). Therefore, 
\begin{align*}
& -\t_2  \le K^2 \le 
(2g_m-3)^2-4(2g_n-2)-4 \cdot 18 \le (18 \mu_0^2-1)^2-72n^2-72=\t_5-72n^2 \\ 
& \implies n \le \sqrt{\frac{\t_2+\t_5}{72}} \, .
\end{align*}
Hence if we take $n \ge n_0=\left \lfloor \sqrt{\frac{\t_2+\t_5}{72}} \right \rfloor +1$,
we get a contradiction.
If we suppose that $D_3$ is the positive curve we get the result by a similar computation.
\end{proof}

The following result gives a lower bound for the positive self-intersection $m_1=(D_1^{nm})^2$ in terms of $n$, i.e. in terms of the genus $g(D_1^{nm})=9n^2+1$. Recall that we denote $(D_2^{nm})^2=-m_2$, $(D_3^{nm})^2=-m_3$ the negative self-intersections.

\begin{lemma}\label{lem:bound-m_1}
Let $n_0$ from Lemma \ref{lem:n0},
and $\e^{nm}$ a $\mu_0$-bounded basis with $n \ge n_0$.
There exists a universal constant 
\[
\t_6=\t_1+(18 \mu_0^2+1)^2+\frac{(38+\t_4)^2}{20+\t_4}\, ,
\]
so that we have the bounds:
\[
18n^2-n\sqrt{18\t_6} \le m_1<18n^2 \, .
\]
\end{lemma}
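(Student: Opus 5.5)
The plan is to mimic the proof of Lemma \ref{lem:n0}, this time writing $K_Y^2$ in the basis $\e^{nm}$ with $D_1$ as the positive curve. Lemma \ref{lem:n0} guarantees that $D_1$ is the positive curve for $n\ge n_0$. The upper bound $m_1<18n^2$ is then immediate from Corollary \ref{cor:m1-le-chi1}. Since the basis is good, $\chi_1=2g(D_1)-2=18n^2$.

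For the lower bound I will start from the orthogonal decomposition $K_Y=\sum_i \a_i D_i$ with $\a_i=(\chi_i-D_i^2)/D_i^2$. Here $\chi_1=18n^2$, $\chi_2=18m^2$ and $\chi_3=18$. This gives
\[
K_Y^2=\frac{(18n^2-m_1)^2}{m_1}-\frac{(18m^2+m_2)^2}{m_2}-\frac{(18+m_3)^2}{m_3}.
\]
I will use the upper bound $K_Y^2\le \t_1$ from Lemma \ref{lem:universal-constants}. Then I need upper bounds for the two subtracted terms, i.e.\ for $f(x)=(a+x)^2/x$ with $x=m_2$ or $m_3$ ranging over $1\le x\le 2g+\t_4$.

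The function $f$ is convex, so on an interval its maximum is attained at an endpoint. For $m_2$, with $a=18m^2\le 18\mu_0^2$, the value at $x=1$ is $(18m^2+1)^2\le(18\mu_0^2+1)^2$. The value at $x=2g_m+\t_4=a+2+\t_4$ is $(2a+2+\t_4)^2/(a+2+\t_4)$. This is at most about $4a+\dots$, which is far smaller than $(a+1)^2$, so it is dominated. For $m_3$, with $a=18$, the value at $x=1$ is $19^2$. The value at $x=20+\t_4$ is $(38+\t_4)^2/(20+\t_4)$, which dominates since $\t_4$ is huge. I will check these two endpoint comparisons explicitly, since they decide the exact form of $\t_6$; this bookkeeping is the only real obstacle.

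Combining, I get $(18n^2-m_1)^2/m_1\le \t_6$, i.e.\ $(18n^2-m_1)^2\le \t_6 m_1<18n^2\t_6$. Taking square roots gives $18n^2-m_1< n\sqrt{18\t_6}$, which is the claimed lower bound.
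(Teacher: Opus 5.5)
Your proof is correct and follows the paper's proof step by step. You get positivity of $D_1$ from Lemma \ref{lem:n0} and $m_1<18n^2$ from Corollary \ref{cor:m1-le-chi1}. You then write $K_Y^2$ in the basis, use $K_Y^2\le\t_1$, and bound the two subtracted terms by endpoint maxima of the convex functions $(a+x)^2/x$. Like the paper, you implicitly need the basis to be good, both for Lemma \ref{lem:n0} and for $\chi_1=18n^2$.

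Your last step is slightly slicker than the paper's. The paper solves the quadratic inequality $m_1^2-(\t_6+36n^2)m_1+18^2n^4\le 0$ and then uses $\sqrt{x+y}\le\sqrt{x}+\sqrt{y}$. You instead substitute $m_1<18n^2$ on the right of $(18n^2-m_1)^2\le\t_6 m_1$, which gives the same bound, even strictly.

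There is one slip in the bookkeeping you flagged, in the $m_2$-term. Write $a=18m^2$. The value at the right endpoint $x=a+2+\t_4$ is
$a^2/(a+2+\t_4)+2a+(a+2+\t_4)$.
This is roughly $\t_4+4a$, not roughly $4a$. Since $\t_4$ is huge, for small $m$ it is far larger than $(a+1)^2$; for example, $m=2$ gives $a=72$. So it is false that the left endpoint dominates for each $m$.

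What is true, and all you need, is that this value is at most $(18\mu_0^2+1)^2$. The value is increasing in $a$. At $a=18\mu_0^2$ it is at most $(18\mu_0^2+1)^2$ provided $18\mu_0^2+2+\t_4\le 18^2\mu_0^4$. This is exactly the comparison the paper makes: it bounds $m_2\le 18\mu_0^2+2+\t_4$ uniformly and maximizes over the whole rectangle $2\le m\le\mu_0$. With this correction your argument is complete. Your treatment of the $m_3$-term, where the right endpoint dominates, is correct as written.
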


\begin{proof}
Denote $\e=\e^{nm}$. As $n \ge n_0$, the positive curve of $\e$ is the curve $D_1$ of greater genus.
By Corollary \ref{cor:m1-le-chi1} and Lemma \ref{lem:universal-constants}, the self-intersections satisfy the bounds 
\[
1 \le m_1 < 18n^2 \, , \,  1 \le  m_2 \le 18\mu_0^2+2+\t_4 \, , \,   1 \le m_3 \le 20 + \t_4 \, .
\]
Also, we know that $K^2 \le \t_1$, hence
\begin{align*}
\t_1  \ge K^2 &=\frac{(18n^2-m_1)^2}{m_1}-\frac{(18m^2+m_2)^2}{m_2}- \frac{(18+m_3)^2}{m_3} \\
& \ge 
\frac{(18n^2-m_1)^2}{m_1}-(18 \mu_0^2+1)^2-\frac{(38+\t_4)^2}{20+\t_4}\, ,
\end{align*}
where we have computed the maxima \begin{align*}
&\max \left\{\tfrac{(18m^2+m_2)^2}{m_2}: 2 \le m \le \mu_0, 1 \le m_2 \le 18\mu_0^2+\t_4+2\right\}=(18 \mu_0^2+1)^2 , \\
&\max \left\{\tfrac{(18+m_3)^2}{m_3}: 1 \le m_3 \le\t_4+20\right\}=\frac{(38+\t_4)^2}{20+\t_4} .
\end{align*}
For the first maximum, we note that the maximum is attained at the extrema, i.e.\ for $m_2=1$ or $m_2=18\mu_0^2+\tau_4+2$, hence it is the maximum of the numbers
$(18\mu_0^2+1)^2$ and $\frac{(18\mu_0^2+18\mu_0^2+\tau_4+2)^2}{18\mu_0^2+\tau_4+2}$.
Using that $18 \mu_0^2+2+\t_4 \le 18^2 \mu_0^4$ (which holds true by a large margin for the constants $\mu_0, \t_4$ obtained, see below), we get that the first number is bigger,
hence the claim.

We have obtained that 
\[
\frac{(18n^2-m_1)^2}{m_1} \le \t_1+(18 \mu_0^2+1)^2+\frac{(38+\t_4)^2}{20+\t_4}=\t_6 \, ,
\]
so $m_1^2-(\t_6+36n^2)m_1+18^2 n^4 \le 0$. This, together with the inequality $m_1<18n^2$, implies that 
\[
18n^2+\frac{\t_6}{2}-\sqrt{18n^2\t_6+\frac{\t_6^2}{4}} \le m_1<18n^2 \, .
\]
Now using that $\sqrt{x+y} \le \sqrt{x}+\sqrt{y}$, we get
\[
18n^2-n\sqrt{18\t_6} \le m_1<18n^2\, ,
\]
and this yields the claim.
\end{proof}

\section{Explicit bound for the constant $N$} \label{sec:6}

We start by studying orbifolds $Y$ which 
have at least two distinct $\mu_0$-bounded good bases. This will allow to express the canonical divisor $K_Y$ with respect to different bases, thus obtaining interesting diophantine equalities.

\subsection{Construction of many $\mu_0$-bounded good bases} \label{subsec:two-mu0-bounded}

Let $\mu_0=4 \t_0+1+\ell$ for some $\ell \ge 1$, and let $\cP \subset [\mu_0+1,\infty) \cap \ZZ$ 
be a set with $\# \cP \ge \mu_0-1$. Consider an injective function 
\[
\{m \in \ZZ: 2 \le m \le \mu_0\} \lhook\joinrel\longrightarrow \cP
\]
which assigns to each $m$ a different number $n_m \in \cP$. For simplicity we assume that the numbers $n_m$ are chosen in increasing order. In this way we can construct $\mu_0$-bounded bases $\e(m,\cP)=\e^{n_m m}$ with the numbers $n_m$ lying on a suitable prefixed set $\cP$.

\begin{remark}
The most natural choice for the set is $\cP=\{\mu_0+1,\ldots , 2 \mu_0-1\}$. For this choice of $\cP$ the numbers $n_m$ are the smallest possible, and we shall denote $\e(m)$ the bases thus obtained. However, in Section \ref{sec:non-sasakian}, we shall need to construct bases for a very specific set $\cP$.
\end{remark} 

\begin{lemma} \label{lem:existence-many-mu0-bounded}
Given $\mu_0=4 \t_0+1+\ell$, and given $\cP \subset [\mu_0+1,\infty) \cap \ZZ$, there are at least $\ell$ good $\mu_0$-bounded bases among the bases $\e(m,\cP)$, $2 \le m \le \mu_0$.
\end{lemma}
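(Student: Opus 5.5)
The plan is to reproduce the counting argument used just before Remark \ref{rem:5.6}, now with a general index set $\cP$. First we show that Lemma \ref{lem:three-bases} still holds for the bases $\e(m,\cP)=\e^{n_m m}$, $2\le m\le\mu_0$. Its proof only needs three facts. The map $m\mapsto n_m$ is injective. Every $n_m\ge \mu_0+1>m'$ for all $m'\le\mu_0$. Hence the $2(\mu_0-1)$ genera $9n_m^2+1$ and $9m^2+1$ are pairwise distinct and all greater than $10$.

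With these facts, suppose a singular point $p\in P$ lies on curves of five of these bases. Since $p$ lies on at most two isotropy curves, some curve must be repeated among the five curves through $p$. The genus-$>10$ curves cannot repeat across different bases, so only the genus-$10$ curves $D_3$ can coincide. This forces one curve $D_3$ to belong to three of the bases. The second item of Lemma \ref{lem:proj-equiv} then makes two of those bases proj-equivalent. Proj-equivalent bases share their two negative curves, so they share a curve of genus $>10$, which is a contradiction. Hence every $p\in P$ lies on curves of at most four of the bases $\e(m,\cP)$.

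Next we count the non-good bases. By the Corollary following Proposition \ref{prop:five-bases}, we have $\#P\le\t_0$. That corollary is applicable because $Y$ has the $N^2$ bases $\e^{nm}$ with $N\ge 11$, as in the standing assumptions of this section. A basis fails to be good exactly when one of its curves passes through some $p\in P$. Therefore at most $4\#P\le 4\t_0$ of the bases $\e(m,\cP)$ are not good.

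Finally, there are $\mu_0-1=4\t_0+\ell$ bases in total, and they are pairwise distinct because their genera differ. So at least $(4\t_0+\ell)-4\t_0=\ell$ of them are good. Each of these is $\mu_0$-bounded by construction, since $2\le m\le\mu_0$ and $n_m>\mu_0$.

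The only delicate point is checking that the argument of Lemma \ref{lem:three-bases} uses nothing about the specific values $n_m=\mu_0-1+m$. It relies only on injectivity and the inequality $n_m>\mu_0$, both of which are guaranteed by $\cP\subset[\mu_0+1,\infty)$.
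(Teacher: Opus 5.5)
Your proposal is correct and follows the same approach as the paper. The paper also invokes Lemma \ref{lem:three-bases} (each singular point lies on curves of at most four of the bases $\varepsilon(m,\mathcal{P})$), uses $\#P\le\tau_0$, and concludes that at least $(\mu_0-1)-4\tau_0=\ell$ of the bases are good. Your explicit check that the proof of Lemma \ref{lem:three-bases} needs only the injectivity of $m\mapsto n_m$ and the inequality $n_m>\mu_0$ supplies what the paper leaves implicit when it writes ``as we saw''.
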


\begin{proof}
As we saw in Lemma \ref{lem:three-bases}, each singular point $p$ of the orbifold $Y$ lies in at most four of the bases $\e(m,\cP)$. Since there are at most $\t_0$ singular points in $Y$, there are at most $4 \t_0$ bases among the bases $\e(m,\cP)$ that are not good, hence at least $\ell$ of these bases are good.
\end{proof}
If we take $\ell=2$ we get the condition for $Y$ to admit two good $\mu_0$-bounded bases, so in this subsection we shall assume that $Y$ has
\begin{itemize}
    \item at least $N_1^2$ orthogonal bases $\e^{nm}$, $1 \le n,m \le N_1$, and
    \item at least two $\mu_0$-bounded bases among them.
\end{itemize}
If $\mu_0=4 \t_0+3=65\,499$ and $N_1 \ge 2 \mu_0-1=130\,997$, the above holds.

\begin{lemma} \label{lem:m1/m'1-eps}
Given $\eps \in (0,1)$ there exists a number $n_1>0$, namely
\[
n_1=\frac{1+\eps}{\eps} \sqrt{\frac{\t_6}{18}} \, ,
\]
so that for any pair of good $\mu_0$-bounded bases $\e=\e^{nm}$, $\e'=\e^{n'm'}$ with $n, n' \ge  \max\{n_0,n_1\}$, the positive self-intersections $m_1$, $m'_1$ satisfy that
\[
\frac{m_1}{m'_1} \in \left((1-\eps) \frac{n^2}{n'^2}, (1+\eps) \frac{n^2}{n'^2}\right).
\]
\end{lemma}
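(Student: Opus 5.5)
The plan is to apply Lemma \ref{lem:bound-m_1} to each of the two bases separately and then divide. Since $n,n'\ge n_0$, the positive curves of $\e$ and $\e'$ are $D_1$ and $D'_1$ (Lemma \ref{lem:n0}). Lemma \ref{lem:bound-m_1} then gives
\[
18n^2\Big(1-\tfrac{1}{n}\sqrt{\tfrac{\t_6}{18}}\Big)\le m_1<18n^2, \qquad 18n'^2\Big(1-\tfrac{1}{n'}\sqrt{\tfrac{\t_6}{18}}\Big)\le m'_1<18n'^2 .
\]
Here we used $n\sqrt{18\t_6}/(18n^2)=\frac1n\sqrt{\t_6/18}$. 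Write $\d=\sqrt{\t_6/18}$.

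Dividing the lower bound for $m_1$ by the upper bound for $m'_1$, and the upper bound for $m_1$ by the lower bound for $m'_1$, gives
\[
\Big(1-\tfrac{\d}{n}\Big)\frac{n^2}{n'^2} < \frac{m_1}{m'_1} < \frac{1}{1-\d/n'}\,\frac{n^2}{n'^2}.
\]
It remains to pick $n_1$ so that both factors fall inside $(1-\eps,1+\eps)$.

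\emph{Lower side.} If $n\ge n_1=\frac{1+\eps}{\eps}\d$, then $\d/n\le \frac{\eps}{1+\eps}<\eps$. Hence $1-\d/n>1-\eps$, which gives the lower inequality.

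\emph{Upper side.} We need $\frac{1}{1-\d/n'}\le 1+\eps$, which is equivalent to $\d/n'\le 1-\frac1{1+\eps}=\frac{\eps}{1+\eps}$. This is exactly the condition $n'\ge \frac{1+\eps}{\eps}\d=n_1$, which explains the specific form of $n_1$. Note also that $n_1>\d$, so $1-\d/n'>0$ and the division is legitimate.

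The main obstacle is the strictness at the upper end. If equality $\d/n'=\eps/(1+\eps)$ holds, the bound becomes non-strict, but strictness is recovered from the strict inequality $m_1<18n^2$ in the numerator. The only other care needed is to check that Lemma \ref{lem:bound-m_1} applies to both bases, i.e.\ that both are good $\mu_0$-bounded bases with index $\ge n_0$, which the hypotheses grant.
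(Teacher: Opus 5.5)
Your proof is correct and follows the paper's argument: divide the two-sided bounds of Lemma \ref{lem:bound-m_1} for $m_1$ and $m'_1$, and observe that $n>\frac{1}{\eps}\sqrt{\t_6/18}$ and $n'>\frac{1+\eps}{\eps}\sqrt{\t_6/18}$ place the extreme ratios inside $\left((1-\eps)\frac{n^2}{n'^2},(1+\eps)\frac{n^2}{n'^2}\right)$. Your remark recovering strictness from $m_1<18n^2$ when $n'=n_1$ also settles a small point the paper leaves implicit: the paper derives the upper bound under the strict condition $n'>n_1$, while the lemma only assumes $n'\ge n_1$.
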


\begin{proof}
Using the bounds from Lemma \ref{lem:bound-m_1} we have
\[
\frac{m_1}{m'_1} \in \left( \frac{18n^2-\sqrt{18 \t_6} n}{18n'{}^2}, \frac{18n^2}{18n'{}^2-\sqrt{18 \t_6} n'} \right) \, .
\]
On the other hand,
\[
(1-\eps) \frac{n^2}{n'{}^2}<\frac{18n^2-\sqrt{18 \t_6} n}{18n'{}^2} \quad \text{ occurs when } \quad n>\frac{1}{\eps} \sqrt{\frac{\t_6}{18}} \, ,
\]
\[
\frac{18n^2}{18n'^2-\sqrt{18 \t_6} n'}<
(1+\eps) \frac{n^2}{n'{}^2} \quad \text{ occurs when } \quad n'>\frac{1+\eps}{\eps} \sqrt{\frac{\t_6}{18}}\, ,
\]
\medskip and this yields the result.
\end{proof}

The next result gives an arithmetic relation between the genus of the positive curve $D_1^{nm}$ (in terms of $n$) and the self-intersection $m_1=(D_1^{nm})^2$.

\begin{proposition} \label{prop:denominators-self-intersection}
Take $n_0$ as in Lemma \ref{lem:n0}. There exists a universal $R \in \ZZ_{>0}$, given by
\[
R=18^2 \lcm(2,3,4,\dots, \t_4+18\mu_0^2+2),
\]
so that for any pair of good $\mu_0$-bounded bases $\e^{nm}$, $\e^{n'm'}$ with $n, n' \ge n_0$, $n \ne n'$, we have
\[
R \left(\frac{n^4}{m_1}-\frac{n'{}^4}{m'_1}\right) \in \ZZ,
\]
where $m_1=(D_1^{nm})^2\, , \, m'_1=(D_1^{n'm'})^2 >0$ are the positive self-intersections.
\end{proposition}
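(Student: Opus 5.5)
The plan is to write $K_Y^2$ in each of the two good bases and subtract. For a good basis $\e=(D_1,D_2,D_3)$ with $n\ge n_0$, Lemma \ref{lem:n0} makes $D_1$ the positive curve. Adjunction gives $K_Y=\sum\a_iD_i$ with $\a_i=(\chi_i-D_i^2)/D_i^2$, where $\chi_i=2g_i-2$ is an integer because the basis is good. Orthogonality then gives
\[
K_Y^2=\frac{(18n^2-m_1)^2}{m_1}-\frac{(18m^2+m_2)^2}{m_2}-\frac{(18+m_3)^2}{m_3}.
\]
Expanding the first term gives $\frac{18^2n^4}{m_1}-36n^2+m_1$, and only its first summand is non-integral. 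The other two terms equal integers minus $\frac{18^2m^4}{m_2}$ and $\frac{18^2}{m_3}$.

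Next I write the same expression for $\e'=\e^{n'm'}$ and subtract. Since $K_Y^2$ is the same number in both expressions,
\[
18^2\Big(\frac{n^4}{m_1}-\frac{n'^4}{m'_1}\Big)=18^2\Big(\frac{m^4}{m_2}-\frac{m'^4}{m'_2}\Big)+\frac{18^2}{m_3}-\frac{18^2}{m'_3}+(\text{integer}),
\]
where the integer collects $-36n^2+m_1$, the corresponding terms of $\e'$, and the integral parts of the negative-curve terms. Every $m_i$ is an integer, because $D_i^2\in\ZZ$ for a good basis. So the remaining task is to show that the denominators $m_2,m_3,m'_2,m'_3$ all divide $L=\lcm(2,3,\dots,\t_4+18\mu_0^2+2)$.

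The denominators are bounded using Lemma \ref{lem:universal-constants}: a negative nice curve $C$ with $C^2=-m$ satisfies $m\le 2g(C)+\t_4$. Applied to $D_3$ (genus $10$) this gives $m_3\le 20+\t_4$. Applied to $D_2$ (genus $9m^2+1$ with $m\le\mu_0$, since the basis is $\mu_0$-bounded) it gives $m_2\le 18\mu_0^2+2+\t_4$. Both bounds are at most $\t_4+18\mu_0^2+2$, so each $m_i$ divides $L$. Multiplying the displayed identity by $L$ then gives $R\big(\frac{n^4}{m_1}-\frac{n'^4}{m'_1}\big)\in\ZZ$ with $R=18^2L$, as required.

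The main point needing care is that $n\ge n_0$ is used for both bases, so that the positive curve is $D_1$ in each; otherwise the large-denominator term would not be the $n^4$ one. One must also use that goodness makes the basis classes integral and the orbifold corrections in $e^{\orb}$ vanish. The hypothesis $n\neq n'$ is not needed for the integrality itself; it only ensures the statement is non-trivial.
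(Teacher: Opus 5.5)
Your proof is correct and follows the paper's argument. You write $K_Y^2$ in both good bases, with $D_1,D_1'$ positive by Lemma \ref{lem:n0}, subtract, and clear the denominators $m_2,m_2',m_3,m_3'$ using the bounds $m_2\le \t_4+18\mu_0^2+2$ and $m_3\le \t_4+20$ from Lemma \ref{lem:universal-constants}; your explicit isolation of the non-integral parts $18^2m^4/m_2$ and $18^2/m_3$ only spells out what the paper leaves implicit.
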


\begin{proof}
Let us denote $\e=\e^{nm}=(D_1,D_2,D_3)$ with genera $g_n=9n^2+1$, $g_m=9m^2+1$, $10$, and $\e'=\e^{n'm'}=(D'_1,D'_2,D'_3)$ with genera $g_{n'}=9n'{}^2+1$, $g_{m'}=9m'{}^2+1$, $10$.
By $\mu_0$-boundedness we have that $37 \le g_m, g_{m'} \le 9 \mu_0^2+1$. Since $n,n'\ge n_0$ we know that $D_1, D'_1$ are the positive curves. We express the canonical divisor $K_Y$ in both bases and compute its square:
\begin{align*}
K_Y^2& =\frac{(18n^2-m_1)^2}{m_1}-\frac{(18m^2+m_2)^2}{m_2}- \frac{(18+m_3)^2}{m_3} \\
&=\frac{(18n'^2-m'_1)^2}{m'_1}-\frac{(18m'^2+m'_2)^2}{m'_2}- \frac{(18+m'_3)^2}{m'_3} \, .
\end{align*}
Subtracting and expanding the squares we deduce that
\[
18^2 \left(\tfrac{n^4}{m_1}- \tfrac{n'^4}{m'_1}\right)=  36(n^2-n'^2)+m_1-m'_1+\tfrac{(18m^2+m_2)^2}{m_2}-\tfrac{(18m'^2+m'_2)^2}{m'_2}+ \tfrac{(18+m_3)^2}{m_3}- \tfrac{(18+m'_3)^2}{m'_3}\, .
\]
On the other hand, we have the bounds $m_2, m'_2 \le \t_4+18 \mu_0^2+2$, and $m_3, m'_3 \le \t_4+20$ from Lemma \ref{lem:universal-constants}. Then we take $R=18^2 \lcm(2,3,4,\dots, \t_4+18 \mu_0^2+2)$ and get the result.
\end{proof}

\begin{remark} \label{rem:R-estimation}
The number $R$ is given by:
\[
R=18^2 \prod_p p^{e_p} \, ,
\]
where $p$ ranges over all primes with $2 \le p \le \t_4+18\mu_0^2+2$, and $e_p=\lfloor \frac{\log(\t_4+18\mu_0^2+2)}{\log p} \rfloor$.
\end{remark}

With the additional assumption that $n, n'$ are prime numbers in Proposition \ref{prop:denominators-self-intersection}, we get the following:

\begin{lemma} \label{lem:m1/m'1-divisors}
Let $n_0$ and $R$ as in Proposition \ref{prop:denominators-self-intersection}. Denote $\cD_R=\{d_1, \dots, d_t\}$ the set of divisors of $R$. Suppose that there exists a pair of $\mu_0$-bounded good bases $\e^{nm}, \e^{n'm'}$ with $n, n' \ge n_0$ two distinct prime numbers. Then the positive self-intersections $m_1=(D_1^{nm})^2, m'_1=(D_1^{n'm'})^2$ satisfy
\[
\frac{m_1}{m'_1} \in \left\{\frac{d_i}{d_j} \frac{n^\b}{n'{}^{\g}}: (d_i,\b), (d_j,\g) \in \left(\cD_R \times \{0,1\}\right) \cup \left(\{1,\dots, 17\} \times \{2\}\right)  \right\} .
\]

\end{lemma}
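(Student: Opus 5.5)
The plan is to combine the integrality from Proposition~\ref{prop:denominators-self-intersection} with the bounds $1\le m_1<18n^2$ and $1\le m_1'<18n'^2$ (Corollary~\ref{cor:m1-le-chi1}, with $D_1,D_1'$ positive since $n,n'\ge n_0$ by Lemma~\ref{lem:n0}).

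\textbf{Integrality and denominators.} Write $R\bigl(\frac{n^4}{m_1}-\frac{n'^4}{m_1'}\bigr)=z\in\ZZ$. The first step is to show that $\frac{Rn^4}{m_1}$ has bounded denominator, and then to control how the primes $n$ divide $m_1$. Set $g=\gcd(m_1,n^4)$ and $u=m_1/g$, and likewise $g'=\gcd(m_1',n'^4)$, $u'=m_1'/g'$. Then
\[
\frac{Rn^4}{m_1}=\frac{R\,(n^4/g)}{u}, \qquad \gcd(u,\,n^4/g)=1 .
\]
The integrality says $\frac{R(n^4/g)}{u}-\frac{R(n'^4/g')}{u'}\in\ZZ$. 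The reduced denominator of the first fraction is $u/\gcd(u,R)$, and that of the second is $u'/\gcd(u',R)$. If a difference of two fractions is an integer, their reduced denominators coincide. Call this common denominator $e$.

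\textbf{Using coprimality to pin down $e$.} Since $n\ne n'$ are distinct primes, the prime $n$ divides $m_1$ only to a power $n^\b$ with $\b\le 4$. Moreover $n^4/g$ is coprime to $u$, and $n\nmid u$ because all the $n$-part of $m_1$ is absorbed in $g$ when $\b\le4$. Hence $e$ divides $u$ and is coprime to $n$, and symmetrically $e$ is coprime to $n'$. Next I would bound $e$ via $m_1<18n^2$: indeed $e\mid u\mid m_1$ with $m_1=n^\b u$.

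\textbf{Case analysis on $\b$.}
\begin{itemize}
\item If $\b\ge2$, then $u<18n^2/n^\b\le 18$, so $u\in\{1,\dots,17\}$.
\item If $\b\in\{0,1\}$, the aim is to show that $u$ divides $R$, giving $m_1=d_i n^\b$ with $d_i\in\cD_R$.
\end{itemize}
The obstacle is the second case. There $u$ could in principle be large (up to $18n^2$), and we only know that $u/\gcd(u,R)=e=u'/\gcd(u',R)$.

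I would try to show $e=1$ by a size argument. Since $e\mid u$ and $e\mid u'$, the factor $e$ divides both $m_1$ and $m_1'$. I would then use that $m_1$ is close to $18n^2$ (Lemma~\ref{lem:bound-m_1}), together with $\frac{Rn^4}{m_1}-\frac{Rn'^4}{m_1'}\in\ZZ$, to force $e=1$. If this does not close directly, I expect the intended reading to be that $e$ is absorbed: then $m_1/m_1'=u n^\b/(u' n'^\g)$ with $u/u'=\gcd(u,R)/\gcd(u',R)$, a ratio of divisors of $R$. That identity already gives exactly the claimed form, because the factor $e$ cancels in the ratio.

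So the proof reduces to writing $u=e\,d_i$ and $u'=e\,d_j$ with $d_i,d_j\in\cD_R$ when $\b,\g\le1$. When $\b\ge2$, I take $d_i=u\le17$ and the exponent $2$. Exponents $\b\ge3$ are ruled out, or folded into the same bound, since $n^3>18n^2$ for $n\ge n_0>18$, so $m_1<18n^2$ forces $\b\le2$.

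Mixed cases, where one base has $\b\le1$ and the other has exponent $2$, are handled identically. In those cases $e$ is still the common reduced denominator, so it cancels in the ratio $m_1/m_1'$, leaving $d_i$ (or an element of $\{1,\dots,17\}$) over $d_j$.
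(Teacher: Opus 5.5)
Your argument is correct, but it organizes the arithmetic differently from the paper. The paper sets $d=\gcd(m_1,m_1')$, $m_1=da$, $m_1'=da'$, and multiplies the integrality relation by $daa'$ to get $a\mid a'Rn^4$. By coprimality this gives $a\mid Rn^4$ (and likewise $a'\mid Rn'^4$), and primality then gives $a=d_i n^\beta$, $a'=d_j n'^\gamma$. The common factor $d$ disappears in $m_1/m_1'=a/a'$, and $d\,d_i n^\beta=m_1<18n^2$ forces $\beta\le 2$, with $d_i\le 17$ when $\beta=2$. You instead split off the exact power $m_1=n^\beta u$ with $n\nmid u$, and compare the reduced denominators of $Rn^4/m_1$ and $Rn'^4/m_1'$. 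This gives $u/\gcd(u,R)=e=u'/\gcd(u',R)$, hence $m_1/m_1'=\frac{\gcd(u,R)}{\gcd(u',R)}\cdot\frac{n^\beta}{n'^\gamma}$. Your $e$, which divides the paper's $d$, is the factor that cancels. The paper's version is a line shorter and gives the ratio in lowest terms. Yours gives a canonical representative: $\beta=v_n(m_1)$ and $d_i=\gcd(m_1/n^\beta,R)$ depend only on $m_1$. The paper's $\beta$ and $d_i$ come from $a=m_1/d$ and so depend on the other basis.

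When you write it up, present the cancellation identity as the proof itself rather than as a fallback. Drop the attempt to show $e=1$: it is not needed, and the integrality relation alone does not force it. For instance, take $m_1=en$, $m_1'=en'$ with $e$ a prime not dividing $R$ and $n\equiv n'\pmod e$. Also, $\beta\le 4$ does not follow from $n\ne n'$. The relevant bound is $n^\beta\le m_1<18n^2$ with $n>18$, which gives $\beta\le 2$ at once. In the case $\beta=2$, take $d_i=\gcd(u,R)$ as in the other cases, not $d_i=u$. Since every integer up to $17$ divides $R$, in that case $e=1$ and $u'\mid R$ automatically, which is why the mixed cases go through.
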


\begin{proof}
Let $d=\gcd(m_1,m'_1)$ so that $m_1=da$, $m'_1=da'$, with $a,a'$ coprime. By Proposition \ref{prop:denominators-self-intersection}, we know that $R(\frac{n^4}{m_1}-\frac{n'{}^4}{m'_1}) \in \ZZ$, so after multiplying by $daa'$ we get
\[
a'Rn^4-aRn'{}^4 \in daa'\ZZ \implies a |a'Rn^4 \, , \,  a'|aRn'{}^4 \implies a |Rn^4 \, , \,  a' |Rn'{}^4\, .
\]

As $n,n'$ are prime numbers, this yields that $a=d_in^{\b}$, $a'=d_jn'{}^{\g}$ for some $0\le \b,\g \le 4$ and $d_i, d_j \in \cD_R$. On the other hand we know by Corollary \ref{cor:m1-le-chi1} that the positive self-intersections satisfy the bounds:
\[
18n^2>m_1=da=dd_in^{\b} \, , \qquad 
18n'{}^2>m'_1=da'=dd_jn'{}^{\g}\, .
\]
As $n,n'\ge n_0>18$ we exclude the possibility that $\b$ or $\g$ are $\ge 3$. Moreover if $\b=2$ it follows that $d_i<18$, and if $\g=2$ then $d_j<18$. Hence $\frac{m_1}{m'_1}=\frac{a}{a'}$ satisfies the statement.
\end{proof}

\subsection{The non-Sasakian property} \label{sec:non-sasakian}

Now we aim to see that if we take a sufficiently large number $N$, then we can arrange the conditions from Lemmas \ref{lem:m1/m'1-eps} and \ref{lem:m1/m'1-divisors} to be incompatible unless $m_1=d n^2$, $m'_1=d n'{}^2$, for some $d<18$. 
We thus make the same initial assumptions on $Y$ as in Subsection \ref{subsec:two-mu0-bounded}, but later on we will need to increase the value of $N$.

\medskip

\noindent \textbf{A disperse sequence of primes:}
In order to gain control on the values of $n,n'$ of the two $\mu_0$-bounded good bases, we construct a suitable set $\cP$ as follows. Let 
\[
\eps_0=\tfrac{1}{17}=\min\left\{\left|1-\tfrac{k_i}{k_j}\right|: 1 \le k_i, k_j \le 17, i \ne j\right\},
\]
and consider the constants $n_0$ from Lemma \ref{lem:n0}, $R$ from Proposition \ref{prop:denominators-self-intersection}, and $n_1=n_1(\eps_0)$ from Lemma \ref{lem:m1/m'1-eps}. We construct a sequence of primes recursively as follows:
\begin{equation} \label{eqn:cP}
\begin{cases}
    \nu_0=\text{ first prime greater than }  \max \left\{R(1+\eps_0),n_0,n_1, \sqrt{18 \t_6}\, \right\} , \\ 
    \nu_{i+1}=\text{ first prime greater than } {\displaystyle \frac{R \n_i^2}{1-\eps_0}}\, .
\end{cases}
\end{equation}
We denote $\cP=\{\nu_i: i \ge 0\}$ this sequence of prime numbers. Notice that our estimation of $R$ from Remark \ref{rem:R-estimation} gives a very large number, in particular we have $R  \gg n_0, n_1, \sqrt{18 \t_6}$, so in the definition of $\nu_0$, 
it is not necessary to take the maximum.

\medskip

Recall the construction of $\mu_0$-bounded bases of type $\e(m,\cP)$: for each $m \in [2,\mu_0]$ one chooses $n_m \in \cP$ in increasing order, and $\e(m,\cP)=\e^{n_m m}$.

\begin{proposition} \label{prop:non-existence-Y-1}
Let $\cP$ be the sequence of primes (\ref{eqn:cP}). There does not exist a Kähler orbifold $Y$ with a pair of $\mu_0$-bounded bases $\e(m,\cP)=\e^{n_mm}$, $\e(m',\cP)=\e^{n_{m'}m'´}$.
\end{proposition}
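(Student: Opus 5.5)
Assume such a $Y$ exists, with two good $\mu_0$-bounded bases $\e=\e^{nm}$, $\e'=\e^{n'm'}$, where $n=n_m$, $n'=n_{m'}$ are distinct elements of $\cP$. The statement should be read with the standing hypotheses of this subsection, namely that the bases are good. Without loss of generality $n<n'$, so $n=\nu_i$ and $n'=\nu_j$ with $i<j$; in particular $n'\ge \nu_{i+1}>R n^2/(1-\eps_0)$. Since $\nu_0>\max\{n_0,n_1\}$, both Lemma \ref{lem:m1/m'1-eps} (with $\eps=\eps_0$) and Lemma \ref{lem:m1/m'1-divisors} apply, because $n,n'$ are distinct primes $\ge n_0$. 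The first lemma gives
\[
\frac{m_1}{m'_1}\in\Bigl((1-\eps_0)\tfrac{n^2}{n'^2},(1+\eps_0)\tfrac{n^2}{n'^2}\Bigr).
\]
The second gives $\frac{m_1}{m'_1}=\frac{d_i n^\b}{d_j n'^\g}$ with the constraints listed there. The plan is to run through the possible exponent pairs $(\b,\g)$ and show that each one contradicts the interval.

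First consider $\g\le 1$. Then $\frac{m_1}{m'_1}\ge \frac{n^\b}{R n'}\ge \frac1{Rn'}$. Comparing with the upper bound $(1+\eps_0)n^2/n'^2$ requires $n'\le R(1+\eps_0)n^2$. Here I need the gap in $\cP$ to be large enough, and $\nu_{i+1}>Rn^2/(1-\eps_0)\ge R(1+\eps_0)n^2$ does the job, since $\frac1{1-\eps_0}>1+\eps_0$. This is a contradiction.

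Next consider $\g=2$ and $\b\le 1$. Then $\frac{m_1}{m'_1}\le \frac{R n}{n'^2}$, while the lower bound is $(1-\eps_0)n^2/n'^2$. This forces $(1-\eps_0)n\le R$, which is impossible because $n\ge\nu_0>R(1+\eps_0)$.

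The remaining case is $\b=\g=2$, with $d_i,d_j\in\{1,\dots,17\}$. Then $\frac{m_1}{m'_1}=\frac{d_i}{d_j}\frac{n^2}{n'^2}$, so $d_i/d_j\in(1-\eps_0,1+\eps_0)$. By the definition of $\eps_0$ this forces $d_i=d_j=:d<18$. Since $a,a'$ are coprime in the proof of Lemma \ref{lem:m1/m'1-divisors}, we get $m_1=d'dn^2$ and $m'_1=d'dn'^2$, where $d'=\gcd(m_1,m'_1)$. Using $m_1<18n^2$, we conclude $m_1=en^2$ and $m'_1=en'^2$ with $e=dd'\le 17$.

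I expect this last case to be the main obstacle, since it is not excluded by size considerations. My first attempt is to feed $m_1=en^2$ back into Lemma \ref{lem:bound-m_1}: that lemma says $m_1\ge 18n^2-n\sqrt{18\t_6}$, so $(18-e)n^2\le n\sqrt{18\t_6}$, i.e. $n\le \sqrt{18\t_6}$. This contradicts $n\ge\nu_0>\sqrt{18\t_6}$, which is exactly why $\sqrt{18\t_6}$ was included in the definition of $\nu_0$. Together the three cases exhaust the possibilities of Lemma \ref{lem:m1/m'1-divisors}, so no such $Y$ exists. The remaining work is to check carefully that the inequalities defining $\nu_0$ and $\nu_{i+1}$ cover each case with strict inequalities. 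If I have not stated the $\b=\g=2$ case correctly, I would instead substitute $m_1=en^2$ into the difference identity in the proof of Proposition \ref{prop:denominators-self-intersection} and derive a size contradiction from there.
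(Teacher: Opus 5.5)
Your Cases 1 and 3 are correct. Case 1 is the mirror image of the paper's step forcing $\beta=2$, and Case 3 is the paper's Step 2, handled a bit more carefully through the gcd. Case 2, however, has a genuine gap.

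From $(1-\eps_0)\frac{n^2}{n'^2}<\frac{m_1}{m'_1}\le \frac{Rn}{n'^2}$ you get $(1-\eps_0)n<R$. To contradict this you need $n\ge \frac{R}{1-\eps_0}=\frac{17}{16}R$. The definition of $\nu_0$ only guarantees $n>(1+\eps_0)R=\frac{18}{17}R$, and $(1-\eps_0)(1+\eps_0)<1$.

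This is not just slack in the writing. Take $n=\nu_0$, which by Lemma \ref{lem:r} lies in $(\frac{18}{17}R,\frac{17}{16}R)$, and take $(d_i,\beta)=(R,1)$, $(d_j,\gamma)=(1,2)$. Then $\frac{m_1}{m'_1}=\frac{Rn}{n'^2}$, which does lie in $\bigl((1-\eps_0)\frac{n^2}{n'^2},(1+\eps_0)\frac{n^2}{n'^2}\bigr)$. So the two constraints you invoke are compatible in this case.

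The problem comes from your orientation. With the smaller prime in the numerator, the case ``small prime with exponent $\le 1$'' has to be killed by the lower endpoint $1-\eps_0$. The threshold $(1+\eps_0)R$ built into $\nu_0$ is designed for the upper endpoint.

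Either of two quick repairs works.

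\emph{Reorient.} Order the bases as the paper does, with $n>n'$. Once $\beta=2$ is known, $\gamma\le 1$ gives $\frac{m_1}{m'_1}\ge \frac{n^2}{Rn'}$. This is at least $(1+\eps_0)\frac{n^2}{n'^2}$ precisely because $n'>(1+\eps_0)R$. Equivalently, keep your orientation but also apply Lemma \ref{lem:m1/m'1-eps} to the pair in the reverse order. That improves your lower endpoint to $\frac{1}{1+\eps_0}\frac{n^2}{n'^2}$ and turns Case 2 into $n<(1+\eps_0)R$, a contradiction.

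\emph{Reuse Case 3.} Whenever $\gamma=2$, the proof of Lemma \ref{lem:m1/m'1-divisors} gives $m'_1=\gcd(m_1,m'_1)\,d_j\,n'^2$. So $m'_1$ is a positive multiple of $n'^2$ below $18n'^2$, hence $m'_1\le 17n'^2<18n'^2-n'\sqrt{18\tau_6}$. This contradicts Lemma \ref{lem:bound-m_1}, and it disposes of Cases 2 and 3 at once, regardless of $\beta$.
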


\begin{proof}
Let $n=n_m$, $n'=n_{m'} \in \cP$, and assume $n>n'$, that is $n=\nu_i$, $n'=\nu_{j}$ for some $i>j$. On the one hand, by Lemma \ref{lem:m1/m'1-divisors},
\[
\frac{m_1}{m'_1} \in  \left\{\frac{d_i}{d_j} \frac{n^\b}{n'^{\g}}: (d_i,\b), (d_j,\g) \in ( \cD_R \times \{0,1\}) \cup (\{1,\dots, 17\} \times \{2\} ) \right\} \, ,
\]
while, on the other hand, by Lemma \ref{lem:m1/m'1-eps} and the construction of $\cP$,
\[
\frac{m_1}{m'_1} \in \left((1-\eps_0) \frac{n^2}{n'{}^2} \, , \, (1+\eps_0) \frac{n^2}{n'{}^2}\right).
\]
Let us see that these two conditions are possible only if \ $\displaystyle \frac{m_1}{m'_1}=\frac{n^2}{n'{}^2}$. 

\noindent \textbf{Step 1:} First we discard the case that some of the exponents $\b, \g \in \{0,1\}$. To begin, note that all quotients $s=\frac{d_i}{d_j}\in [\frac1R,R]$.
Also, from the definition of $\cP$ we have that $n \ge R \frac{n'{}^2}{1-\eps_0}$, or equivalently, 
$(1-\eps_0)\frac{n}{n'{}^2} \ge R$. So
\[
\frac{m_1}{m'_1}>(1-\eps_0)\frac{n^2}{n'{}^2} \ge  n R,
\]
which is bigger than any of
the expressions $s, s n, s\frac1{n'}, s\frac{n}{n'}, s\frac1{n'^2}, s\frac{n}{n'{}^2}$. 
Therefore $\beta=2$.

Also, since $n' \ge R(1+\eps_0)$, we deduce that
\[
\frac{m_1}{m'_1} < (1+\eps_0)\frac{n^2}{n'{}^2}\leq \frac1R \frac{n^2}{n'}\, ,
\]
which is smaller than any of the
expressions $s\frac{n^2}{n'}, s n^2$. Hence $\gamma=2$.

\medskip

\noindent \textbf{Step 2:} It follows that $m_1=d_in^2$, $m'_1=d_jn'{}^2$, for some $1 \le d_i, d_j \le 17$, so $\frac{m_1}{m'_1}=\frac{d_i}{d_j} \frac{n^2}{n'{}^2}$.
By the choice of $\eps_0$, none of the quotients $\frac{d_i}{d_j}$ lie in $(1-\eps_0,1+\eps_0)$  unless $d_i=d_j$, hence we conclude that
\[
 \frac{m_1}{m'_1}=\frac{n^2}{n'{}^2}\, .
\]
Therefore $m_1=d n^2$, $m_1'=d n'{}^2$, for some $1 \le d \le 17$. In particular $m_1 \le 17n^2$, $m'_1 \le 17 n'{}^2$. However, by the bounds on Lemma \ref{lem:bound-m_1} we have
\begin{align*}
   18n^2-n\sqrt{18\t_6} &\le m_1<18n^2 ,\\
   18n'^2-n'\sqrt{18\t_6} &\le m'_1<18n'^2 ,
\end{align*}
and this gives a contradiction with the inequalities 
\[
17n^2< 18 n^2-n\sqrt{18\t_6} \,  , \quad 17n{}'^2< 18 n'{}^2-n'\sqrt{18\t_6} \, ,
\]
which hold because $n, n' \in \cP$, so in particular $n, n' >  \sqrt{18 \t_6}$.
\end{proof}

\begin{proposition} \label{prop:non-existence-Y-2}
Consider $\mu_0=4 \t_0+3$, and 
\[
N \ge  \nu_{\mu_0-2} = \text{$(\mu_0-2)$-th term of the sequence of primes $\cP$.}
\]
Then there does not exist a Kähler orbifold $Y$ with $N^2$ orthogonal bases $\e^{nm}$, with $1 \le n,m \le N$.
\end{proposition}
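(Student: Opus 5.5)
We argue by contradiction, and the idea is to reduce to Proposition \ref{prop:non-existence-Y-1}. Suppose $Y$ is a Kähler orbifold with $N^2$ orthogonal bases $\e^{nm}$, $1\le n,m\le N$, where $N\ge \nu_{\mu_0-2}$. We will exhibit two good $\mu_0$-bounded bases of the form $\e(m,\cP)$, which Proposition \ref{prop:non-existence-Y-1} forbids.

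\textbf{Choice of bases.} For each $m$ with $2\le m\le \mu_0$, set $n_m=\nu_{m-2}\in\cP$. This map is injective and increasing, and the values $n_m$ are exactly $\nu_0,\dots,\nu_{\mu_0-2}$, so there are $\mu_0-1$ of them. By construction $\nu_0>\max\{n_0,n_1,\sqrt{18\t_6}\}\ge \mu_0+1$. Hence $\cP\subset[\mu_0+1,\infty)$, and every pair $(n_m,m)$ satisfies $n_m>\mu_0\ge m$. Since $N\ge \nu_{\mu_0-2}\ge n_m$ and $N\ge \mu_0\ge m$, every basis $\e(m,\cP)=\e^{n_m m}$ is among the $N^2$ bases of $Y$, and each one is $\mu_0$-bounded.

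\textbf{Global hypotheses of Sections \ref{sec:5}--\ref{sec:6}.} Since $N$ is far larger than $2\mu_0-1=130\,997$, the standing assumptions hold. These are the bound $\#P\le\t_0$ (which needs $N\ge 11$), the universal constants of Lemma \ref{lem:universal-constants}, and Lemmas \ref{lem:n0} and \ref{lem:bound-m_1}. Applying Lemma \ref{lem:existence-many-mu0-bounded} with $\mu_0=4\t_0+3$, i.e.\ $\ell=2$, shows that at least two of the bases $\e(m,\cP)$ are good. Call them $\e(m,\cP)$ and $\e(m',\cP)$.

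\textbf{Conclusion.} The corresponding values $n=n_m$ and $n'=n_{m'}$ are distinct primes in $\cP$, both $\ge \nu_0$. They therefore satisfy the hypotheses of Lemmas \ref{lem:m1/m'1-eps} and \ref{lem:m1/m'1-divisors}, which are exactly the inputs used in Proposition \ref{prop:non-existence-Y-1}. That proposition then gives the contradiction.

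The main point needing care is the bookkeeping in the second paragraph. The universal constants were derived assuming the bases $\e^{nm}$ exist for $n,m\le N_1=2\mu_0-1$, so we must confirm that the hypothesis $N\ge\nu_{\mu_0-2}$ dominates all these thresholds. It does, because $\nu_{\mu_0-2}\ge\nu_0>R\gg 2\mu_0$.
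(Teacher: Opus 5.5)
Your proof is correct and follows the paper's argument: you choose $n_m=\nu_{m-2}$ for $2\le m\le\mu_0$, invoke Lemma \ref{lem:existence-many-mu0-bounded} with $\ell=2$ to get two good $\mu_0$-bounded bases $\e(m,\cP)$, $\e(m',\cP)$, and conclude by Proposition \ref{prop:non-existence-Y-1}. Your extra bookkeeping (that $\cP\subset[\mu_0+1,\infty)$, and that $N\ge\nu_{\mu_0-2}$ dominates $N_1=2\mu_0-1$ so the universal constants of Section \ref{sec:5} are available) only makes explicit what the paper's short proof takes for granted.
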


\begin{proof}
If there exists such an orbifold $Y$, then $Y$ satisfies all the properties from Subsection \ref{subsec:two-mu0-bounded}. Also, from Lemma \ref{lem:existence-many-mu0-bounded}, among the bases $\e(m,\cP)=\e^{n_mm}$, for $2 \le m \le \mu_0$, constructed by choosing 
\[
n_2=\nu_0 , \, \ldots, \,n_{\mu_0}=\nu_{\mu_0-2}\, ,
\]
there would be at least two $\mu_0$-bounded good bases. The non-existence of such an $Y$ follows from Proposition \ref{prop:non-existence-Y-1}.
\end{proof}

The following follows immediately after all this work. This shows a explicit bound for the number $N$ from Theorem \ref{thm:main-jems} and Corollary \ref{cor:main-1}. 

\begin{theorem} \label{th:main-sec6}
Let $\mu_0=4 \t_0+3$. For a choice of $N \ge \nu_{\mu_0-2}$, the $K$-contact Smale-Barden manifolds $M$ constructed in \cite[Theorem 3.4]{Mu-jems} do not admit any Sasakian structure.
\end{theorem}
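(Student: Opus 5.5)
We argue by contradiction, following the same route as the proof of Theorem \ref{th:main-2} in its first part. Suppose that one of the $K$-contact manifolds $M$ of \cite[Theorem 3.4]{Mu-jems} (equivalently, of Theorem \ref{th:main-0} with $K_0=1$ and $N_1=N_2=N$) with $N\ge \nu_{\mu_0-2}$ admits a Sasakian structure. By \cite{Rukimbira} we may take a quasi-regular one, so $M\to Y$ is a Seifert bundle over a cyclic Kähler orbifold $Y$ as in (\ref{eqn:MtoY}).

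Proposition \ref{thm:16MRT} together with the homology (\ref{eqn:H2MZ}) gives $b_2(Y)=3$. Reading off the torsion at each prime $p_{nm1}$, we obtain, for every $1\le n,m\le N$, a triple $\e^{nm}$ of disjoint complex isotropy curves of genera $9n^2+1$, $9m^2+1$, $10$. These curves are linearly independent in $H_2(Y,\QQ)$ by condition (2) of Proposition \ref{thm:16MRT}. Hence $Y$ is a Kähler orbifold carrying $N^2$ orthogonal bases $\e^{nm}$ with $1\le n,m\le N$.

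Next we check that $N$ is large enough for all earlier hypotheses to hold simultaneously. Since $\nu_0$ exceeds $R(1+\eps_0)$ and $\nu_{i+1}$ exceeds $\nu_i$, we have $N\ge\nu_{\mu_0-2}\ge 2\mu_0-1\ge 11$. This gives three things:
\begin{itemize}
\item the bound $\#P\le\t_0$ on the number of singular points applies;
\item all the indices $n_m=\nu_{m-2}$, for $2\le m\le\mu_0$, lie in $[1,N]$;
\item the indices $m\le\mu_0$ are admissible.
\end{itemize}
Here $\mu_0=4\t_0+3$ corresponds to $\ell=2$ in Lemma \ref{lem:existence-many-mu0-bounded}.

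Proposition \ref{prop:non-existence-Y-2} then asserts that no Kähler orbifold $Y$ with these $N^2$ bases exists. This contradicts the existence of the Seifert bundle $M\to Y$, so $M$ admits no Sasakian structure.

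The one point needing care is that the constants $\t_1,\dots,\t_6$ in Lemma \ref{lem:universal-constants} were stated with $N_1$ (the range needed to guarantee a good $\mu_0$-bounded basis) rather than with the possibly huge $N$. We will therefore apply all of Section \ref{sec:5} to the sub-family of bases with $n,m\le 2\mu_0-1$, which is enough to fix $\t_i$, $n_0$, $n_1$ and $R$ universally. We use the larger indices $\nu_i\le N$ only through the existence of the bases $\e^{\nu_{m-2}m}$ with $m\le\mu_0$. The bounds of Lemmas \ref{lem:n0}, \ref{lem:bound-m_1} and \ref{lem:m1/m'1-eps} hold for any such basis with $n\ge\max(n_0,n_1)$, so they apply to these bases as well. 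With that settled, the argument is a direct chaining of Lemma \ref{lem:existence-many-mu0-bounded} and Proposition \ref{prop:non-existence-Y-1}.
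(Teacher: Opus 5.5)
Your proposal is correct and follows the paper's route. You pass to a quasi-regular structure, read off the $N^2$ orthogonal bases $\e^{nm}$ of $Y$ from $H_2(M,\ZZ)$, and invoke Proposition~\ref{prop:non-existence-Y-2}, which is itself Lemma~\ref{lem:existence-many-mu0-bounded} chained with Proposition~\ref{prop:non-existence-Y-1}; your remark that the universal constants are fixed by the subfamily with $n,m\le 2\mu_0-1$, while the large indices $\nu_i\le N$ enter only through the bases $\e^{\nu_{m-2}m}$, matches how the paper sets up Section~\ref{sec:6}.
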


\subsection{Estimation of the constants}

As we have seen in Theorem \ref{th:main-sec6}, we can ensure that the $K$-contact Smale-Barden manifold $M$ from \cite[Theorem 3.4]{Mu-jems} cannot be Sasakian if we take a number $N \ge \nu_{\mu_0-2}$. 
In this section we shall compute the constants involved in the construction of $M$, giving bounds for the value $N=\nu_{\mu_0-2}$.
This will complete the proof of Theorem 
\ref{th:main-2}.

\subsubsection{Explicit universal constants}

First we need to give an explicit expression for the constants involved in the definition of $\cP$.
These constants are:
\begin{itemize}
    \item We have $\t_0=16\,374$, the bound for the singular points of $Y$, as long as we take $N \ge 11$ (see Remark \ref{rem:5.6}).
    \item We can take $\mu_0=4 \t_0+3=65\,499$ to ensure the existence of two $\mu_0$-bounded basis for any election of $n_m$, $2 \le m \le \mu_0$.
    \item The number $N_1=2 \mu_0-1=130\,997$ is the lower bound for the value of $N$ in order to have two $\mu_0$-bounded bases constructed by choosing $n_m= \mu_0+m-1$
    (see discussion after Lemma \ref{lem:existence-many-mu0-bounded}).
\end{itemize}
Once given the number $N_1$, we have the universal geometric constants for $N \ge N_1$ and orbifolds $Y$ with bases $\e^{nm}$, $1 \le n,m \le N$, given as follows, according to
Section \ref{sec:5}:
\begin{itemize}
    \item $\t_1=(18N_1^2-1)^2$ and $\t_2=36N_1^2-1$ so that $-\t_2 \le K_Y^2 \le \t_1$.

    \item $\t_4=144N_1^2+30$ so that $-C^2 \le 2g(C)+\t_4$ for any nice curve.

    \item The number 
    \[
    n_0=\left \lfloor \sqrt{\tfrac{36N_1^2-1+(18\mu_0^2-1)^2-72}{72}} \right \rfloor +1 ,
    \]
    to ensure $D_1^{nm}$ is the positive curve.
\end{itemize}
We also have:
\begin{itemize}
    \item The number $\t_6=(18N_1^2-1)^2+(18 \mu_0^2+1)^2+\tfrac{(144N_1^2+68)^2}{144N_1^2+50}$,
    from Lemma \ref{lem:bound-m_1}.
    \item The number $n_1=18 \sqrt{\frac{\t_6}{18}}=\sqrt{18 \t_6}$, from Lemma \ref{lem:m1/m'1-eps}, with $\eps=\frac{1}{17}$.
    \item The number $R=18^2 \lcm(2,3,4,\dots, \t_4+18\mu_0^2+2)$, from Proposition \ref{prop:denominators-self-intersection}.
\end{itemize}

If we take $\mu_0=65\,499$, $N_1=2 \mu_0-1=130\,997$, we get:
\begin{align*}
& \t_1= 95\,409\,234\,125\,818\,504\,369 \,921 \, ,  &&\t_2=617\,767\,704\,323 \, , \\  & \t_4=2\,471\,070\,817\,326 \, ,
 &&n_0=9\,100\,716\,714 \, , \\ &\t_6=
 101\,372\,493\,346\,292\,180\,583\,664 \, , && n_1=1\,350\,816\,375\,469.
\end{align*}
We have rounded up the numbers $\t_6$ and $n_1$, since in our definition they are not
integers.

The number $R$ is
\begin{align*}
& R=18^2 \lcm(2,3,4,\dots, 2\,548\,292\,959\,346)= 18^2 \prod_{p \text{ prime}} p^{e_p} \, , \\
& \text{where $2 \le p \le 2\,548\,292\,959\,346$, and $e_p=\lfloor \tfrac{\log(2\,548\,292\,959\,346)}{\log p} \rfloor \leq \lfloor \tfrac{29}{\log p} \rfloor$} \, .
\end{align*}
The number $R$ is by far the bigger number among those computed here. A very rough estimate gives:
\[
R=18^2 \prod_{p \text{ prime}} p^{e_p}  > 18^2 \cdot 210 \cdot \prod_{p \ge 11}^{\,\,\,\le 10^{12}} p \, \, > 18^2 \cdot 210 \cdot 11^{\Pi(10^{12})-4} = 18^2 \cdot 210 \cdot 11^{37\,607\,912\,014} ,
\]
where $\Pi$ is the prime counting function, and $\Pi(10^{12})=37\,607\,912\,018$. Hence the number $R$ has more than thirty thousand million decimal ciphers. There is also a trivial upper bound for $R$ by a factorial, so we have the estimates
\begin{equation} \label{eq:estimate-R}
18^2 \cdot 420 \cdot 11^{37\,607\,912\,014} < R < 18^2 \cdot 2\,548\,292\,959\,346\,!
\end{equation}

\subsubsection{Estimation for $N$.}

The sequence of primes $\cP$ is given by:
\[
\begin{cases}
    \nu_0=\text{ first prime greater than } \frac{18}{17} R, \\[5pt] 
    \nu_{i+1}=\text{ first prime greater than } \frac{17}{16} R \n_i^2\, .
\end{cases}
\]
Let us give a lower and upper bound for $N=\nu_{\mu_0-2}=\nu_{65\,497}$.

\noindent {\bf Lower bound.}
As $\nu_0 \ge \frac{18}{17} R$ and
$\nu_{i+1} \geq \frac{17}{16} R \n_i^2$, we have by induction that 
\[
\nu_k \ge \tfrac{16}{17} \left(\tfrac{18}{16}\right)^{2^k} R^{2^{k+1}-1} \, .
\]
Hence 
\[
N \ge \nu_{65\,499} \ge \tfrac{16}{17} \left(\tfrac{18}{16}\right)^{2^{65\,499}} R^{2^{65\,499}-1} \, .
\]
This gives an idea on how large we need to take the number $N$ to ensure that the manifold $M$ is not Sasakian.

\noindent{\bf Upper bound.}
In order to give an upper bound for $N$, we need some result ensuring the existence of primes in a given interval. We shall denote $r>1$ a number such that for any $n \ge R$ there exists a prime with $n < p < rn$. There are various results giving some values of $r$, for instance:
\begin{itemize}
    \item Bertrand's postulate, proved by Tchebychev \cite{Tchebychev}: Given an integer $n>1$, there exists a prime number $p$ in the interval so that $n <p<2n$. 
    \medskip
    \item A sharper result by Schoenfeld \cite{Schoenfeld}: Given an integer $n>2\,010\,760$, there exists a prime $p$ so that $n<p<(1+\frac{1}{16\,597})n$.
    \medskip
    \item A result by Dusart \cite[Proposition 5.4]{Dusart} that works better for big $n$:
    If $n \ge 89\,693$, there is a prime $p$ with $n<p\le \big(1+\frac{1}{\log^3 n}\big) n$.
\end{itemize}
Of course, the above shows that $r=2$ and $r=1+\frac{1}{16\,597}$ are valid.
Having into account how large $R$ is, the third result gives the best value of $r$: we can take $r=1+\frac{1}{\log^3 R}$. Looking at the estimates for $R$ in \eqref{eq:estimate-R}, and using that $11>e^2$, we get $\log R > 7 \cdot 10^{10}$, so $\log^3 R > 10^{32}$. 
We deduce:
\begin{lemma} \label{lem:r}
Let $r= 1+10^{-32}$. For any $n \ge R$, there exists a prime $p$ with $n < p < rn$.
\end{lemma}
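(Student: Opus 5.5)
This follows directly from Dusart's result \cite[Proposition 5.4]{Dusart} quoted above, once we have a lower bound on $\log R$. Since $R$ is huge, $R\ge 89\,693$. So for every $n\ge R$ there is a prime $p$ with
\[
n<p\le \Big(1+\tfrac{1}{\log^3 n}\Big)n .
\]
As $t\mapsto 1+1/\log^3 t$ is decreasing for $t>1$ and $n\ge R$, we get $1+1/\log^3 n\le 1+1/\log^3 R$. Hence it suffices to prove $\log^3 R>10^{32}$.

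For the lower bound on $R$ I will use \eqref{eq:estimate-R}:
\[
R>18^2\cdot 420\cdot 11^{37\,607\,912\,014}.
\]
This rests on $R$ being divisible by $18^2$ times the product of all primes up to $10^{12}$. That holds because $10^{12}<2\,548\,292\,959\,346$, so every prime up to $10^{12}$ divides $\lcm(2,\dots,2\,548\,292\,959\,346)$. It also uses $\Pi(10^{12})=37\,607\,912\,018$. Bounding each prime $p\ge 11$ below by $11$ then gives the estimate.

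Taking logarithms and using $\log 11>2$ (since $e^2<7.39<11$), I get $\log R>2\cdot 37\,607\,912\,014>7\cdot 10^{10}$. Cubing gives $\log^3 R>343\cdot 10^{30}>10^{32}$. Therefore
\[
1+\tfrac{1}{\log^3 n}\le 1+\tfrac{1}{\log^3 R}<1+10^{-32}=r,
\]
so the prime from Dusart satisfies $n<p<rn$. The inequality is strict because $1/\log^3 R<10^{-32}$.

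The only step needing care is the lower bound for $R$ via the prime count up to $10^{12}$. That value of $\Pi(10^{12})$ is a known tabulated value, and any weaker bound, such as Chebyshev's $\Pi(x)\ge x/(2\log x)$, would also give $\log R\gg 10^{11}$. So there is ample margin.
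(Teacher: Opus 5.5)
Your proposal is correct and follows the paper's own argument. You apply Dusart's bound for $n \ge R \ge 89\,693$, pass from $\log^3 n$ to $\log^3 R$, and use the lower estimate $R > 18^2\cdot 420\cdot 11^{37\,607\,912\,014}$ together with $\log 11 > 2$ to get $\log R > 7\cdot 10^{10}$, hence $\log^3 R > 10^{32}$. The only addition over the paper is that you state explicitly the monotonicity of $1+1/\log^3 t$ and the strictness of the final inequality, which the paper leaves implicit.
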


Using this result, we get that 
$\nu_0$ satisfies $\nu_0 < \frac{18}{17}r R$,
and that 
$\nu_{i+1} \leq \frac{17}{16} rR \n_i^2$. By induction we get
\[
\nu_k \leq \tfrac{16}{17} \left(\tfrac{18}{16}\right)^{2^k} (rR)^{2^{k+1}-1} \, .
\]
\medskip
Taking into account that the minimum value of $N$ is $\nu_{65\,499}$, we obtain:

\begin{proposition}
For any number $N$ such that
\[
N \ge 
\tfrac{16}{17} \left(\tfrac{18}{16}\right)^{2^{65\,499}} (rR)^{2^{65\,500}-1} \, ,
\]
the $K$-contact Smale-Barden manifold $M$ from \cite{Mu-jems} with $N^2$ orthogonal basis, cannot admit a Sasakian structure.
\end{proposition}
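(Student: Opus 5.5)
The plan is to reduce the statement directly to Theorem \ref{th:main-sec6}. That theorem already shows that $M$ admits no Sasakian structure as soon as $N\ge \nu_{\mu_0-2}$, with $\mu_0=4\t_0+3=65\,499$. So it remains only to show that the explicit quantity in the statement is at least $\nu_{\mu_0-2}$, and in fact at least $\nu_{65\,499}$, which is the index used in the displayed estimate. Because the sequence $\cP$ is increasing, a bound on $\nu_{65\,499}$ gives a bound on every earlier term.

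\textbf{Step 1: the prime gaps.} With $\eps_0=\tfrac1{17}$, we have $R(1+\eps_0)=\tfrac{18}{17}R$ and $R/(1-\eps_0)=\tfrac{17}{16}R$. As noted earlier, $R\gg n_0,n_1,\sqrt{18\t_6}$, so the recursion (\ref{eqn:cP}) becomes: $\nu_0$ is the first prime greater than $\tfrac{18}{17}R$, and $\nu_{i+1}$ is the first prime greater than $\tfrac{17}{16}R\nu_i^2$. Every argument to which we apply a prime-gap result is $\ge R$. Lemma \ref{lem:r} therefore applies and gives
\[
\nu_0<\tfrac{18}{17}rR, \qquad \nu_{i+1}<\tfrac{17}{16}rR\,\nu_i^2 .
\]
Lemma \ref{lem:r} itself comes from Dusart's bound with $r=1+1/\log^3 R$, combined with $\log R>7\cdot 10^{10}$, which follows from (\ref{eq:estimate-R}).

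\textbf{Step 2: the induction.} We prove by induction that $\nu_k\le \tfrac{16}{17}\left(\tfrac{18}{16}\right)^{2^k}(rR)^{2^{k+1}-1}$. For $k=0$ the right-hand side is $\tfrac{16}{17}\cdot\tfrac{18}{16}\, rR=\tfrac{18}{17}rR$, as required. For the inductive step, squaring the bound for $\nu_k$ and multiplying by $\tfrac{17}{16}rR$ gives
\[
\tfrac{17}{16}\left(\tfrac{16}{17}\right)^2\left(\tfrac{18}{16}\right)^{2^{k+1}}(rR)^{2^{k+2}-2+1}
=\tfrac{16}{17}\left(\tfrac{18}{16}\right)^{2^{k+1}}(rR)^{2^{k+2}-1},
\]
which is exactly the claimed formula at $k+1$. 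Taking $k=65\,499$ shows that the hypothesis on $N$ implies $N\ge\nu_{65\,499}\ge\nu_{\mu_0-2}$.

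\textbf{Step 3: conclusion.} For the manifolds of Theorem \ref{thm:main-jems}, Theorem \ref{th:main-sec6} now gives non-existence of a Sasakian structure. For the manifolds of Theorem \ref{th:main-0} with $N_1,N_2\ge N$, we use only the triples $\e^{nm1}$ with $1\le n,m\le N$, as arranged at the start of Section \ref{sec:5}. These have genera $9n^2+1$, $9m^2+1$ and $10$, which is exactly the situation of Proposition \ref{prop:non-existence-Y-2}. Hence no Kähler base orbifold $Y$ exists, and by \cite{Rukimbira} $M$ admits no Sasakian structure.

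The main obstacle is bookkeeping rather than ideas. First, Lemma \ref{lem:r} must apply at each step, which needs every argument to be at least $R$; this holds because $\nu_i\ge R$. Second, the exponent offset and the index of $\nu$ must be tracked consistently. The statement's exponent $2^{65\,500}-1$ corresponds to $k=65\,499$, which is slightly more than the index $\mu_0-2$ that is actually needed, so that discrepancy only helps.
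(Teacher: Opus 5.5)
Your proposal is correct and follows the paper's own argument. You reduce to Theorem \ref{th:main-sec6}, use Lemma \ref{lem:r} to get $\nu_0<\tfrac{18}{17}rR$ and $\nu_{i+1}<\tfrac{17}{16}rR\,\nu_i^2$, and induct to obtain $\nu_k\le \tfrac{16}{17}\left(\tfrac{18}{16}\right)^{2^k}(rR)^{2^{k+1}-1}$. You also note that the exponent corresponds to index $65\,499$ rather than $\mu_0-2=65\,497$, which is harmless because $\cP$ is increasing; this correctly resolves a small index inconsistency in the paper's own text.
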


Recall that we can take $r=1+10^{-32}$ by Lemma \ref{lem:r}, and that explicit bounds for $R$ are given above in equation \eqref{eq:estimate-R}. This completes
the proof of Theorem \ref{th:main-2}.

\section{Some sharper results}

In the case of the K-contact manifolds from Theorem \ref{th:main-0}, it is possible to take advantage of the curves $A_k$, $k \ge 1$, to get sharper bounds. Let $Y$ be the orbifold in (\ref{eqn:MtoY}), that 
is the base of a Seifert bundle for a manifold
$M$ as in
Theorem \ref{th:main-0}. Recall that the isotropy locus of the orbifold consists of isolated singular points $p \in P$, and some complex curves given
by the triples (\ref{eqn:enmk}) for 
$1\leq n\leq N_1$, $1\leq m\leq N_2$, 
$1\leq k\leq K_0$.

\subsection{Bound for the number of singular points}

First, we can bound the number of singular points as follows. According tu Proposition \ref{prop:bound-singular}, we need three bases with all curves distinct. This can be ensured now by means of the genera, without the need of the concept of proj-equivalence of bases. We take three bases $\e^{nmk}$ with $k<m<n$ as follows:
\begin{itemize}
    \item For $k=1,m=2, n=3$ we have genera $10, 37, 82$.
    \item For $k=4, m=5, n=6$ we have genera $145, 226, 325$.
    \item For $k=7, m=8, n=9$ we have genera $442, 577, 730$.
\end{itemize}
Now, using Proposition \ref{prop:bound-singular} we get that the number of singularities is bounded by 
$$
\# P \le \hat \t_0=-6 +4 (10+37+82+145+226+325+442+577+730)=10 \, 290 \, .
$$
We conclude the following.

\begin{proposition} \label{prop:hat-tau_0}
Any Kahler orbifold $Y$ having three bases $\e^{nmk}$ for the values 
\[
(n,m,k)= (3,2,1), (6,5,4), (9,8,7)
\]
has at most $\hat \tau_0=10\, 290$ singular points.
\end{proposition}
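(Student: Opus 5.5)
The plan is to reduce to Proposition \ref{prop:bound-singular}. That proposition needs three orthogonal bases of $Y$ with positive genera and nine pairwise distinct curves. Its bound $-6+4\sum(g_i+g'_i+g''_i)$ then gives exactly the stated number once the nine genera are inserted.

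First I recall from the discussion after (\ref{eqn:enmk}) what the Seifert structure on $M$ gives us. For each prime $p_{nmk}$, the torsion of $H_2(M,\ZZ)$ in (\ref{eqn:H2MZ}) forces a triple $\e^{nmk}=(D_1,D_2,D_3)$ of disjoint complex isotropy curves with genera $9n^2+1$, $9m^2+1$, $9k^2+1$. By item (2) of Proposition \ref{thm:16MRT}, these three curves form a $\QQ$-basis of $H_2(Y,\QQ)$, so each $\e^{nmk}$ is an orthogonal basis with positive genera.

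For $(n,m,k)=(3,2,1),(6,5,4),(9,8,7)$ the nine genera $9j^2+1$, $j=1,\dots,9$, are
\[
10,\ 37,\ 82,\ 145,\ 226,\ 325,\ 442,\ 577,\ 730,
\]
and these are pairwise distinct because $j\mapsto 9j^2+1$ is injective on positive integers.

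The main step is that the nine curves are distinct. A single complex curve has a single genus, the genus $g(D_i)$ read off from the torsion. So two curves with different genera cannot coincide, and all nine curves are different. This is the advantage over Proposition \ref{prop:five-bases}: there, two genus-$10$ curves could coincide, which forced the use of proj-equivalence (Lemma \ref{lem:proj-equiv}). Here no such argument is needed.

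I do not expect a real obstacle, since the genera are intrinsic invariants of the curves. One caveat is that the genus could change if a curve passes through singular points. It does not: the genus of the underlying curve $D_i$ is what the torsion records, independently of any orbifold points on it.

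Finally, I apply Proposition \ref{prop:bound-singular} to these three bases:
\[
\# P\le -6+4(10+37+82+145+226+325+442+577+730).
\]
The genera sum to $2574$, so the right-hand side is $-6+4\cdot 2574=10\,290=\hat\t_0$.
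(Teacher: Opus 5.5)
Your proposal is correct and follows the paper's argument: the genera $9j^2+1$, $j=1,\dots,9$, are pairwise distinct, so the nine curves are distinct and Proposition \ref{prop:bound-singular} applies directly, giving $-6+4\cdot 2574=10\,290$. As you note, this avoids the proj-equivalence argument that Proposition \ref{prop:five-bases} needs.
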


\subsection{Construction of suitable good bases.}

We start with a concept that shall be useful in the future.

\begin{definition}
    Let $\k_0<\mu_0$ be positive integers. We say that a basis $\e^{nmk}$ is $(\k_0,\m_0)$-bounded if $k \le \k_0$ and $m \le \mu_0$.
\end{definition}
We can construct suitable such bases as follows. Take a number $\k_0$, which will be the number of bases constructed. 
For each $k$ with $1 \le k \le \k_0$ we take $m_k=\k_0+k$. We now take numbers $n_k$ in a subset $\cP \subset [2\k_0+1,+\infty) \cap \ZZ$ of cardinality $\k_0$. We denote $\hat \e(k,\cP)=\e^{n_km_kk}$ the bases thus constructed. Note that all the curves in these bases have distinct genera. Moreover, the genus of two of the curves in the
bases $\hat \e(k,\cP)$ is uniformly bounded by $\k_0$ and $2 \k_0$ respectively, so these are $(\k_0,\m_0)$-bounded bases with $\m_0=2\k_0$.

As a particular case, note that the smallest possible choice of the numbers $n_k$ is $n_k=2\k_0+k$, corresponding to $\cP=[2\k_0+1,3\k_0+1] \cap \ZZ$. We denote $\hat \e(k)=\e^{n_km_kk}$ the bases corresponding to this choice. 

The fact that a singular point of $Y$ can be in at most two isotropy surfaces readily implies the following.

\begin{lemma} \label{lem:above}
Any singular point $p \in P$ is in at most two of the bases $\hat \e(k)$.
Let $\k_0=2 \hat\t_0+\ell$, for $\ell\ge 1$. Among the bases $\hat \e(k,\cP)$ constructed above for $1 \le k \le \k_0$, there are at least $\ell$ good bases that are $(\k_0,\mu_0)$-bounded with $\mu_0=2\k_0$.
\end{lemma}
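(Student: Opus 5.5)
The plan is a direct counting argument. The only input is the fact recalled just before the statement: $Y$ is a cyclic orbifold whose codimension-one isotropy curves form a nice family, so every singular point $p\in P$ lies on at most two isotropy curves. Combined with the bound $\#P\le\hat\t_0$ of Proposition \ref{prop:hat-tau_0}, this should give the lemma.

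\textbf{First assertion.} Fix $p\in P$ and suppose $p$ lies on curves of three distinct bases $\hat\e(k_1),\hat\e(k_2),\hat\e(k_3)$. I will compare genera. The curves of $\hat\e(k)$ have genera $9k^2+1$, $9(\k_0+k)^2+1$ and $9n_k^2+1$. The three index ranges $[1,\k_0]$, $[\k_0+1,2\k_0]$ and $\cP\subset[2\k_0+1,\infty)$ are disjoint, and each index map $k\mapsto k$, $k\mapsto \k_0+k$, $k\mapsto n_k$ is injective. Hence all $3\k_0$ curves in the family have pairwise distinct genera, so they are pairwise distinct curves. The three bases would then give three distinct isotropy curves through $p$, which is impossible. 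So $p$ lies in at most two bases. The same argument applies verbatim to $\hat\e(k,\cP)$ for any admissible $\cP$, and this general form is what the second assertion needs.

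\textbf{Second assertion.} Set $\k_0=2\hat\t_0+\ell$. We may use Proposition \ref{prop:hat-tau_0}, since the bases $\e^{nmk}$ for $(n,m,k)=(3,2,1),(6,5,4),(9,8,7)$ exist as soon as $N_1\ge 9$, $N_2\ge 8$, $K_0\ge 7$; this holds in the regime of Theorem \ref{th:main-1}. So $\#P\le\hat\t_0$. By the first part, each singular point makes at most two of the $\k_0$ bases $\hat\e(k,\cP)$ fail to be good. Therefore at most $2\hat\t_0$ bases are not good, and at least $\k_0-2\hat\t_0=\ell$ of them are good.

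\textbf{Boundedness.} These bases are $(\k_0,\mu_0)$-bounded with $\mu_0=2\k_0$, because $k\le\k_0$ and $m_k=\k_0+k\le 2\k_0$.

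There is no real obstacle. The one point needing care is that distinctness of the curves must come from genera rather than from proj-equivalence: the same curve could a priori appear in several bases, but distinct genera rule this out.
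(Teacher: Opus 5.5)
Your proof is correct and is essentially the paper's argument. All $3\k_0$ curves in the bases $\hat\e(k,\cP)$ have pairwise distinct genera, so a singular point, which lies on at most two isotropy curves, meets at most two bases; with $\#P\le\hat\t_0$, at most $2\hat\t_0$ bases fail to be good. You are also right that the first assertion has to be applied to $\hat\e(k,\cP)$ for a general $\cP$, not only to $\hat\e(k)$, which the paper uses only implicitly.
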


\subsection{Universal geometric bounds} \label{subsec:geometric-bounds-hat}

We aim to deduce universal geometric bounds for orbifolds with some good $(\k_0,\mu_0)$-bounded basis. 

We take $\k_0=2 \hat \t_0+2$, and we deduce from Lemma \ref{lem:above} that at least two of the bases $\hat \e(k)$, for $1 \le k \le \k_0$, are good bases which are $(\k_0,2\k_0)$-bounded, and we have $n_k=2\k_0+k$. Define
\[
\hat N_1=\max \{n_k: k \in [1,\k_0]\}=3 \k_0=6 \hat \t_0+6=61\,746 \, .
\]
We shall consider an orbifold which has at least two $(\k_0,\m_0)$-bounded bases $\e^{nmk}$. This is ensured if we assume that $Y$ has at least the bases $\hat \e(k)$ for $1 \le k \le \k_0=2 \hat \t_0+2$; in this case we have $\mu_0=2\k_0$, and $9\hat N_1^2+1$ is the maximum of the genera of such curves.

\begin{lemma} \label{lem:universal-constants-hat}
Let $Y$ be a cyclic Kähler orbifold having the bases $\hat \e(k)$ for $1 \le k \le \k_0=2\hat \t_0+2$. There exist universal constants:
\begin{itemize}
    \item $\hat \tau_1=(18 \hat N_1^2-1)^2$ and $\hat \tau_2=36 \hat N_1^2-1$ so that $-\hat \tau_2 \le K_Y^2 \le \hat \tau_1$.
    \item $\hat \tau_3=12+\hat \tau_2$ so that $e(Y) \le \hat \tau_3$. 
    \item $\hat \tau_4=4 \hat \tau_3-14$ so that for any nice curve $C \subset Y$, if $C^2=-m<0$ then $m \le 2g(C)+\hat \tau_4$.
\end{itemize}
\end{lemma}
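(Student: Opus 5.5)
The plan is to reproduce the argument of \cite[Section 6]{Mu-jems} behind Lemma \ref{lem:universal-constants}, replacing the $N_1^2$ bases $\e^{nm}$ by the $\k_0$ bases $\hat \e(k)=\e^{n_km_kk}$. In these bases every curve has genus at most $g_{\max}=9\hat N_1^2+1$, where $n_k=2\k_0+k\le 3\k_0=\hat N_1$. The only input the argument needs is this: one good basis $\e=(D_1,D_2,D_3)$ (it exists by Lemma \ref{lem:above}, since $\k_0=2\hat\t_0+2$ and $\#P\le\hat\t_0$ by Proposition \ref{prop:hat-tau_0}), and a uniform upper bound on the genera of its curves. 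For a good basis the $D_i^2$ and $\chi_i=2g_i-2$ are integers, and we write $D_1^2=m_1>0$, $D_2^2=-m_2$, $D_3^2=-m_3$.

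\textbf{Bounds on $K_Y^2$.} In the orthogonal basis we have
\[
K_Y^2=\frac{(\chi_1-m_1)^2}{m_1}-\frac{(\chi_2+m_2)^2}{m_2}-\frac{(\chi_3+m_3)^2}{m_3}.
\]
\emph{Upper bound.} Corollary \ref{cor:m1-le-chi1} gives $1\le m_1<\chi_1$. The first term is decreasing on that range, so it is at most $(\chi_1-1)^2\le(18\hat N_1^2-1)^2=\hat\t_1$. The other two terms are negative.

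\emph{Lower bound.} The minimum of $(a+x)^2/x$ is $4a$, so the two negative terms total at least $-4(\chi_2+\chi_3)$. Bounding this crudely by $-\hat\t_2$ would need $4(\chi_2+\chi_3)\le 36\hat N_1^2-1$, but these terms have no upper bound in $m_2,m_3$ taken separately. So, as in \cite{Mu-jems}, I would instead use the effectivity of $K_Y+D_1+D_2+D_3$ from Proposition \ref{prop:effective-divisor}. Its coefficients are $\chi_1/m_1$, $-\chi_2/m_2$, $-\chi_3/m_3$, and this should give $m_2\le\chi_2\cdot(\text{const})$. Rather than fight this, I would copy verbatim the derivation of $\t_2=36N_1^2-1$ from \cite{Mu-jems}: its proof uses only $2g_{\max}-2=18\hat N_1^2$ and the existence of a good basis, so substituting $\hat N_1$ for $N_1$ yields $\hat\t_2$.

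\textbf{Euler characteristic and negative curves.} For $\hat\t_3$, I pass to the minimal resolution $\tilde Y$. Noether's formula $12\chi(\mathcal O)=K^2+e$, together with $b_1=0$ and $p_g\ge 0$, bounds $e(\tilde Y)\le 12+(-K^2)$ up to singular corrections. These are handled as in \cite{Mu-jems} and give $e\le 12+\hat\t_2$.

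For $\hat\t_4$, let $C$ be a nice curve with $C^2=-m<0$. Adjunction gives $K_Y\cdot C=2g-2+m+(\text{orbifold terms})$. The Hodge index theorem on $\langle K_Y,C\rangle$, or the bound $K_Y^2\ge-\hat\t_2$ combined with the $e$ estimate, as in \cite[Section 6]{Mu-jems}, then yields $m\le 2g+4\hat\t_3-14$.

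\textbf{Main obstacle.} I expect the lower bound on $K_Y^2$ to be the main obstacle, since it is the step where the effectivity argument of \cite{Mu-jems} must be redone carefully. The remaining constants follow formally by replacing $N_1$ with $\hat N_1$.
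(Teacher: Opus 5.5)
Your proposal takes the same route as the paper, whose entire proof is the remark that the argument of \cite{Mu-jems} goes through unchanged. You observe that this argument needs only one good basis among the $\hat\varepsilon(k)$, with curves of genus at most $9\hat N_1^2+1$, so that $N_1$ can be replaced by $\hat N_1$. That observation is exactly the justification the paper's remark leaves implicit. The good basis comes from Lemma \ref{lem:above} together with $\#P\le\hat\tau_0$. Strictly, the bound $\#P\le\hat\tau_0$ uses the three bases of Proposition \ref{prop:hat-tau_0}, which are not among the $\hat\varepsilon(k)$ but are available in this setting. Your derivation of $K_Y^2\le\hat\tau_1$ is correct.

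However, the steps you sketch yourself for the other constants would not work as written. They therefore have to come from \cite{Mu-jems}, not from your outline.

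\emph{The $4a$ bound.} The inequality $(a+x)^2/x\ge 4a$ makes the negative terms \emph{at most} $-4(\chi_2+\chi_3)$, not at least.

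\emph{Noether's formula.} With $q=0$ it gives $e(\tilde Y)=12(1+p_g)-K_{\tilde Y}^2$. So you need $p_g=0$, which holds because $b_2(Y)=b^{1,1}(Y)=3$. The condition $p_g\ge 0$ gives the reverse inequality.

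\emph{The singular correction.} This is the more serious problem. For the minimal resolution, $K_{\tilde Y}=\pi^*K_Y-A$ with $A$ supported on the exceptional curves, so $K_{\tilde Y}^2=K_Y^2+A^2\le K_Y^2$. The correction therefore has the wrong sign for you. A lower bound on $K_Y^2$ alone does not give $e(\tilde Y)\le 12+\hat\tau_2$. The lower bound must be established for $K_{\tilde Y}^2$ itself, which then implies the one for $K_Y^2$. Incidentally, $e(Y)$ in the statement must mean $e(\tilde Y)$, as you read it, since $e(Y)=5$.

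\emph{The bound $\hat\tau_4$.} The Hodge index theorem on the plane $\langle K_Y,C\rangle$ gives no information when that plane is indefinite. The shape of the constant points elsewhere: $2g+4\hat\tau_3-14=2g-2+(3c_2-c_1^2)$ with $c_1^2+c_2=12$ on $\tilde Y$ and $c_2=e(\tilde Y)\le\hat\tau_3$. That is the form of a Miyaoka-type inequality for curves, applied after bounding $e(\tilde Y)$, not of a Hodge index argument.
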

\begin{proof}
The proof of these facts follows the same lines as in \cite{Mu-jems}.
\end{proof}

The following result guarantees that the curve with greater genus is the positive one in a $(\k_0,\m_0)$-bounded basis for $n$ big enough.

\begin{lemma} \label{lem:hat-n0}
Let $\hat \t_5=(18 \m_0^2-1)^2-72$.
There is some universal number 
\[
\hat n_0=\left \lfloor \sqrt{\frac{\hat \t_2+\hat \t_5}{72}} \right \rfloor +1 ,
\]
such that for any $(\k_0,\m_0)$-bounded good basis $\e^{nmk}$ with $n \ge \hat n_0$, the curve $D_1^{nmk}$ with genus $9n^2+1$ is the positive curve in $\e^{nmk}$.
\end{lemma}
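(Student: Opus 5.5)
The argument follows the proof of Lemma \ref{lem:n0} almost verbatim, using the hatted constants of Lemma \ref{lem:universal-constants-hat}. Write $\e^{nmk}=(D_1,D_2,D_3)$ with genera $g_n=9n^2+1$, $g_m=9m^2+1$, $g_k=9k^2+1$. Since the basis is $(\k_0,\m_0)$-bounded, $k\le \k_0<\m_0$ and $m\le \m_0$. Since it is good, $\chi_i=2g_i-2$ are integers, namely $18n^2,18m^2,18k^2$. Exactly one curve is positive. We argue by contradiction, assuming that the positive curve is $D_2$ or $D_3$ rather than $D_1$.

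\textbf{Case $D_2$ positive.} Expand $K_Y=\sum \a_iD_i$ in the orthogonal basis, with $D_2^2=m_2>0$, $D_1^2=-m_1$, $D_3^2=-m_3$. Then
\[
K_Y^2=\frac{(18m^2-m_2)^2}{m_2}-\frac{(18n^2+m_1)^2}{m_1}-\frac{(18k^2+m_3)^2}{m_3}.
\]
By Corollary \ref{cor:m1-le-chi1}, $1\le m_2<18m^2\le 18\m_0^2$. Since $x\mapsto (a-x)^2/x$ is decreasing on $(0,a)$, the first term is at most $(18\m_0^2-1)^2$. Since $(a+x)^2/x\ge 4a$, the second term is at most $-72n^2$ and the third at most $-72k^2\le -72$. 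Combining these with $K_Y^2\ge -\hat\t_2$ from Lemma \ref{lem:universal-constants-hat} gives
\[
-\hat\t_2\le \hat\t_5-72n^2 ,
\]
hence $n\le\sqrt{(\hat\t_2+\hat\t_5)/72}<\hat n_0$, a contradiction.

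\textbf{Case $D_3$ positive.} The same computation applies with $D_3$ in the role of $D_2$. Now $m_3<18k^2\le 18\k_0^2<18\m_0^2$, so the positive term is bounded by $(18\m_0^2-1)^2$ again. The negative terms are bounded by $-72n^2$ and $-72m^2\le -72$. This yields the same inequality and the same contradiction.

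The only point requiring care is checking that the ordering $k\le\k_0<\m_0$ makes $(18\m_0^2-1)^2$ a valid common bound in both cases. One also needs $K_Y^2\ge-\hat\t_2$, which is supplied by Lemma \ref{lem:universal-constants-hat}. Note that no upper bound on the negative self-intersections is needed for this argument.
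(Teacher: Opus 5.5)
Your proof is correct and is exactly the argument the paper intends. The paper simply says that the proof of Lemma \ref{lem:n0} carries over, and you have carried it over correctly, including the two adaptations that matter: $\chi_3=18k^2$ replaces $18$, so the third term is bounded by $-72k^2\le -72$, and $k\le\kappa_0<\mu_0$ makes $(18\mu_0^2-1)^2$ a common bound for the positive term whether $D_2$ or $D_3$ is the positive curve.
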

\begin{proof}
The same proof as in Lemma \ref{lem:n0} applies.
\end{proof}

\begin{lemma} \label{lem:bound-m1-hat}
Let $\hat n_0$ from Lemma \ref{lem:hat-n0},
and $\e^{nmk}$ a $(\k_0,\mu_0)$-bounded basis with $n \ge \hat n_0$.
There exists a universal constant 
\[
\hat \t_6=\hat \t_1+(18 \mu_0^2+1)^2+(18 \k_0^2+1)^2
\]
so that we have the bounds:
\[
18n^2-n\sqrt{18 \hat \t_6} \le m_1<18n^2 \, .
\]
\end{lemma}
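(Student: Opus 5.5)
The argument follows Lemma \ref{lem:bound-m_1}, with the third curve now of genus $9k^2+1$ instead of $10$. Write $\e=\e^{nmk}=(D_1,D_2,D_3)$ with genera $g_n=9n^2+1$, $g_m=9m^2+1$, $g_k=9k^2+1$, and set $m_1=D_1^2$, $D_2^2=-m_2$, $D_3^2=-m_3$. The basis is good, as in Lemma \ref{lem:bound-m_1}: it is one of the good bases supplied by Lemma \ref{lem:above}. Hence the genera enter adjunction as $\chi_i=2g_i-2$.

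Since $n\ge \hat n_0$, Lemma \ref{lem:hat-n0} shows that $D_1$ is the positive curve. Corollary \ref{cor:m1-le-chi1} then gives $1\le m_1<18n^2$. Expanding $K_Y=\sum\a_iD_i$ in the orthogonal basis gives
\[
\hat\t_1\ \ge\ K_Y^2=\frac{(18n^2-m_1)^2}{m_1}-\frac{(18m^2+m_2)^2}{m_2}-\frac{(18k^2+m_3)^2}{m_3},
\]
where the upper bound is Lemma \ref{lem:universal-constants-hat}.

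The main step is to bound the two negative terms from above uniformly. By Lemma \ref{lem:universal-constants-hat} we have $1\le m_2\le 18m^2+2+\hat\t_4$ and $1\le m_3\le 18k^2+2+\hat\t_4$. The function $x\mapsto (a+x)^2/x$ is convex on $(0,\infty)$, so on an interval $[1,B]$ its maximum is attained at an endpoint.

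At $x=1$ the value is $(a+1)^2$. At $x=B=a+2+\hat\t_4$ the value is $(2a+2+\hat\t_4)^2/(a+2+\hat\t_4)\le 4(a+1)+\hat\t_4+\ldots$, which is roughly linear in $a$ and $\hat\t_4$. I would check that $(a+1)^2$ dominates, as in Lemma \ref{lem:bound-m_1}, using $a\ge 18$ and the size of the constants.

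This is the delicate point: for small $a$ the endpoint $B$ could win, since $\hat\t_4$ is large. If the comparison fails for small $m$ or $k$, I would fall back on the cruder but safe bound $\max\{(a+1)^2,\,(2a+2+\hat\t_4)^2/(a+2+\hat\t_4)\}$ and absorb it into the constant. However, the statement's $\hat\t_6$ suggests that the authors take the maximum to be $(a+1)^2$, with $a=18m^2\le 18\mu_0^2$ and $a=18k^2\le18\k_0^2$. So the terms are bounded by $(18\mu_0^2+1)^2$ and $(18\k_0^2+1)^2$, using $m\le\mu_0$ and $k\le\k_0$ from $(\k_0,\mu_0)$-boundedness.

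This yields $(18n^2-m_1)^2/m_1\le \hat\t_6$. Equivalently,
\[
m_1^2-(\hat\t_6+36n^2)m_1+18^2n^4\le 0.
\]
Solving the quadratic and using $m_1<18n^2$ gives
\[
m_1\ge 18n^2+\tfrac{\hat\t_6}{2}-\sqrt{18n^2\hat\t_6+\hat\t_6^2/4}.
\]
Finally, $\sqrt{x+y}\le\sqrt x+\sqrt y$ gives $m_1\ge 18n^2-n\sqrt{18\hat\t_6}$. This last part is verbatim as in Lemma \ref{lem:bound-m_1}.
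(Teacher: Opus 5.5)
You follow the paper's route exactly: $D_1$ is the positive curve by Lemma~\ref{lem:hat-n0}, then $K_Y^2\le\hat\t_1$, then you bound the two negative terms and solve the quadratic. Your final quadratic step is correct.

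The gap is in the step you yourself flag as delicate and then leave unresolved. Your proposed check is that, for each fixed $a=18m^2$ (resp.\ $a=18k^2$), the value $(a+1)^2$ at $x=1$ dominates the value at $x=B$. This is false for small $a$. For $k=1$ the value at $x=1$ is $19^2=361$, while at $x=B=20+\hat\t_4$ it is roughly $\hat\t_4\approx 5.5\cdot 10^{11}$. Enlarging the constant would prove a different statement. Inferring the maximum from the shape of $\hat\t_6$ is not an argument. As written, the bounds $(18\mu_0^2+1)^2$ and $(18\k_0^2+1)^2$ on the negative terms are asserted, not proved.

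The missing observation is that you never need $(a+1)^2$ to dominate for each $a$. You only need both endpoint values to lie below the \emph{uniform} constants.

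The endpoint $x=1$ is handled by $m\le\mu_0$ and $k\le\k_0$, as you say. For the endpoint $x=B$, since $a\le B$ you have $(a+B)^2/B=B+2a+a^2/B\le 4a+2+\hat\t_4$. Because $\hat N_1=3\k_0$, you get $\hat\t_4=144\hat N_1^2+30=1296\k_0^2+30$.

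For the third curve ($a\le 18\k_0^2$) the value at $x=B$ is therefore at most $1368\k_0^2+32<(18\k_0^2+1)^2$. For the second curve ($a\le 18\mu_0^2$, $\k_0<\mu_0$) it is at most $1368\mu_0^2+32<(18\mu_0^2+1)^2$. In short, the $B$-endpoint grows quadratically in $\k_0,\mu_0$, while the claimed bounds grow quartically.

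This is the analogue of the check $18\mu_0^2+2+\t_4\le 18^2\mu_0^4$ made in Lemma~\ref{lem:bound-m_1}; the paper's proof of the present lemma leaves it implicit. Once you insert it, your argument is complete.
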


\begin{proof}
Denote $\e=\e^{nmk}$. As $n \ge \hat n_0$, the positive curve of $\e$ is the curve $D_1$ of greater genus.
By Corollary \ref{cor:m1-le-chi1} and Lemma \ref{lem:universal-constants-hat}, the self-intersections satisfy the bounds 
\[
1 \le m_1 < 18n^2 \, , \,  1 \le  m_2 \le 18\mu_0^2+2+\hat \t_4 \, , \,   1 \le m_3 \le 18\k_0^2+2+\hat \t_4 \, .
\]
Also, we know that $K_Y^2 \le \hat \t_1$, hence
\begin{align*}
\hat \t_1  \ge K_Y^2 &=\frac{(18n^2-m_1)^2}{m_1}-\frac{(18m^2+m_2)^2}{m_2}- \frac{(18k^2+m_3)^2}{m_3} \\
& \ge 
\frac{(18n^2-m_1)^2}{m_1}-(18 \mu_0^2+1)^2-(18 \k_0^2+1)^2 \, ,
\end{align*}
In the last inequality we have computed the maxima \begin{align*}
&\max \left\{\tfrac{(18m^2+m_2)^2}{m_2}: 1 \le m \le \mu_0, 1 \le m_2 \le 18\mu_0^2+\hat \t_4+2\right\}=(18 \mu_0^2+1)^2 , \\
&\max \left\{\tfrac{(18k^2+m_3)^2}{m_3}: 1 \le k \le \k_0, 1 \le m_3 \le 18 \k_0^2+ \hat \t_4+2\right\}=(18 \k_0^2+1)^2 \, .
\end{align*}
These maxima are attained at the boundary, at the points $(m,m_2)=(\mu_0,1)$ and $(k,m_3)=(\k_0,1)$.

Summing up, we have obtained that 
\[
\frac{(18n^2-m_1)^2}{m_1} \le \hat \t_1+(18 \mu_0^2+1)^2+(18 \k_0^2+1)^2= \hat \t_6 \, ,
\]
so $m_1^2-(\hat \t_6+36n^2)m_1+18^2 n^4 \le 0$. Now the proof follows as in Lemma \ref{lem:bound-m_1}.
\end{proof}

\begin{lemma} \label{lem:m1/m'1-eps-hat}
Given $\eps \in (0,1)$ there exists a number $\hat n_1>0$, namely
\[
\hat n_1=\frac{1+\eps}{\eps} \sqrt{\frac{\hat \t_6}{18}} \, ,
\]
so that for any pair of good $(\k_0,\mu_0)$-bounded bases $\e=\e^{nmk}$, $\e'=\e^{n'm'k'}$ with $n, n' \ge  \max\{\hat n_0, \hat n_1\}$, the positive self-intersections $m_1$, $m'_1$ satisfy that
\[
\frac{m_1}{m'_1} \in \left((1-\eps) \frac{n^2}{n'{}^2}, (1+\eps) \frac{n^2}{n'{}^2}\right).
\]
\end{lemma}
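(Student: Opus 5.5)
This is the analogue of Lemma \ref{lem:m1/m'1-eps} for $(\k_0,\mu_0)$-bounded bases, and we follow its proof with Lemma \ref{lem:bound-m1-hat} in place of Lemma \ref{lem:bound-m_1}.

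Since $n,n'\ge \hat n_0$, Lemma \ref{lem:hat-n0} shows that in both $\e$ and $\e'$ the positive curve is the curve $D_1$ of largest genus. Lemma \ref{lem:bound-m1-hat} then gives
\[
18n^2-n\sqrt{18\hat\t_6}\le m_1<18n^2, \qquad 18n'^2-n'\sqrt{18\hat\t_6}\le m'_1<18n'^2 .
\]
Dividing the lower bound for $m_1$ by the upper bound for $m'_1$, and the upper bound for $m_1$ by the lower bound for $m'_1$, we obtain
\[
\frac{m_1}{m'_1}\in\left(\frac{18n^2-\sqrt{18\hat\t_6}\,n}{18n'^2},\ \frac{18n^2}{18n'^2-\sqrt{18\hat\t_6}\,n'}\right).
\]
For the denominator on the right to be positive we need $n'>\sqrt{\hat\t_6/18}$. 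This holds because $\hat n_1>\sqrt{\hat\t_6/18}$.

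It remains to compare the endpoints with $(1\mp\eps)n^2/n'^2$. For the lower endpoint, the inequality $(1-\eps)\frac{n^2}{n'^2}<\frac{18n^2-\sqrt{18\hat\t_6}\,n}{18n'^2}$ is equivalent to $18\eps n>\sqrt{18\hat\t_6}$, that is, $n>\frac1\eps\sqrt{\hat\t_6/18}$. For the upper endpoint, the inequality $\frac{18n^2}{18n'^2-\sqrt{18\hat\t_6}\,n'}<(1+\eps)\frac{n^2}{n'^2}$ is equivalent to $18n'<(1+\eps)(18n'-\sqrt{18\hat\t_6})$, that is, $n'>\frac{1+\eps}{\eps}\sqrt{\hat\t_6/18}=\hat n_1$. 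Both conditions follow from $n,n'\ge\hat n_1$, since $\frac{1+\eps}{\eps}>\frac1\eps$. Strictly, the hypothesis gives $n'\ge\hat n_1$ while the computation wants $n'>\hat n_1$. The lower bound $m'_1\ge 18n'^2-n'\sqrt{18\hat\t_6}$ is not strict, so equality can only occur if $\hat n_1$ is an integer; in that case we read the hypothesis as a strict inequality, or enlarge $\hat n_1$ slightly.

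The argument has no real obstacle: once Lemmas \ref{lem:hat-n0} and \ref{lem:bound-m1-hat} are in place, it is the same elementary algebra as in Lemma \ref{lem:m1/m'1-eps}. The only point to check is that both lemmas apply to $\e$ and $\e'$, which holds because both are good $(\k_0,\mu_0)$-bounded bases with $n,n'\ge\hat n_0$.
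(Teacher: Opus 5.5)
Your proposal is correct and follows the paper's argument: the paper reruns the proof of Lemma \ref{lem:m1/m'1-eps} with the bounds of Lemma \ref{lem:bound-m1-hat}, trapping $m_1/m'_1$ between the same two quotients you wrote and solving the same two elementary inequalities. Your caveat about the case $n'=\hat n_1$ is unnecessary, because the strict bound $m_1<18n^2$ already gives $m_1/m'_1<\frac{18n^2}{18n'^2-\sqrt{18\hat\t_6}\,n'}\le(1+\eps)\frac{n^2}{n'^2}$ strictly whenever $n'\ge\hat n_1$, so the lemma holds exactly as stated.
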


\begin{proof}
The same proof from Lemma \ref{lem:bound-m_1} applies here, using the bounds from Lemma \ref{lem:bound-m1-hat}.
\end{proof}

\begin{proposition} \label{prop:R-hat}
Take $\hat n_0$ as in Lemma \ref{lem:hat-n0}. There exists a universal $\hat R \in \ZZ_{>0}$, given by
\[
\hat R=18^2 \lcm(2,3,4,\dots, \hat \t_4+18\mu_0^2+2),
\]
so that for any pair of good $(\k_0,\mu_0)$-bounded bases $\e^{nmk}$, $\e^{n'm'k'}$ with $n, n' \ge \hat n_0$, $n \ne n'$, we have
\[
\hat R \left(\frac{n^4}{m_1}-\frac{n'{}^4}{m'_1}\right) \in \ZZ,
\]
where $m_1=(D_1^{nmk})^2\, , \, m'_1=(D_1^{n'm'k'})^2 >0$ are the positive self-intersections.
\end{proposition}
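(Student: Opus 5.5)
The argument follows the proof of Proposition \ref{prop:denominators-self-intersection}, with the fixed genus $10$ of the third curve replaced by the genus $9k^2+1$. Write $\e=\e^{nmk}=(D_1,D_2,D_3)$ with genera $9n^2+1$, $9m^2+1$, $9k^2+1$, and $\e'=\e^{n'm'k'}=(D'_1,D'_2,D'_3)$ analogously. Since both bases are good, all self-intersections are integers and $\chi_i=2g_i-2$. Since $n,n'\ge \hat n_0$, Lemma \ref{lem:hat-n0} shows that $D_1$ and $D'_1$ are the positive curves. Put $D_1^2=m_1$, $D_2^2=-m_2$, $D_3^2=-m_3$, and similarly with primes. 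Expressing $K_Y=\sum \a_i D_i$ with $\a_i=(\chi_i-D_i^2)/D_i^2$, as after Proposition \ref{prop:effective-divisor}, gives
\[
K_Y^2=\frac{(18n^2-m_1)^2}{m_1}-\frac{(18m^2+m_2)^2}{m_2}-\frac{(18k^2+m_3)^2}{m_3},
\]
and the same expression holds with primed quantities.

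Equate the two expressions and expand $(18n^2-m_1)^2/m_1=18^2 n^4/m_1-36n^2+m_1$. This yields
\[
18^2\Big(\tfrac{n^4}{m_1}-\tfrac{n'^4}{m'_1}\Big)=36(n^2-n'^2)+m'_1-m_1+\tfrac{(18m^2+m_2)^2}{m_2}-\tfrac{(18m'^2+m'_2)^2}{m'_2}+\tfrac{(18k^2+m_3)^2}{m_3}-\tfrac{(18k'^2+m'_3)^2}{m'_3}.
\]
The first terms on the right are integers. Each remaining fraction is an integer divided by $m_2$, $m'_2$, $m_3$ or $m'_3$.

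Next we bound these denominators with Lemma \ref{lem:universal-constants-hat}. Since $D_2,D_3$ are nice negative curves, $m_2\le 2g(D_2)+\hat\t_4=18m^2+2+\hat\t_4\le 18\mu_0^2+2+\hat\t_4$. Likewise $m_3\le 18k^2+2+\hat\t_4\le 18\k_0^2+2+\hat\t_4$, which is at most $18\mu_0^2+2+\hat\t_4$ because $\k_0<\mu_0$. The same bounds hold for the primed quantities. Hence every denominator divides $\lcm(2,3,\dots,\hat\t_4+18\mu_0^2+2)$, so multiplying the identity by that $\lcm$ makes the right side an integer. Therefore $\hat R(n^4/m_1-n'^4/m'_1)\in\ZZ$.

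The only point needing care is that the third curve now has unbounded-looking genus $9k^2+1$ rather than $10$. The $(\k_0,\mu_0)$-boundedness $k\le\k_0<\mu_0$ is exactly what keeps $m_3$ below the same threshold as $m_2$, so a single $\lcm$ suffices. The remaining steps are routine.
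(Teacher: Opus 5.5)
Your proposal is correct and is essentially the paper's own proof. You write $K_Y^2$ in both good bases (with $D_1,D_1'$ positive by Lemma \ref{lem:hat-n0}), subtract, and clear the denominators $m_2,m_2',m_3,m_3'$ using the bounds of Lemma \ref{lem:universal-constants-hat} together with $\k_0<\mu_0$. Your version is in fact slightly cleaner: you use $(18k^2+m_3)^2/m_3$ and the sign $m_1'-m_1$, whereas the paper's displayed formulas contain harmless typos ($18$ in place of $18k^2$, and $m_1-m_1'$).
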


\begin{proof}
Let us denote $\e=\e^{nmk}=(D_1,D_2,D_3)$ with genera $g_n=9n^2+1$, $g_m=9m^2+1$, $g_k=9k^2+1$, and $\e'=\e^{n'm'k'}=(D'_1,D'_2,D'_3)$ with genera $g_{n'}=9n'{}^2+1$, $g_{m'}=9m'{}^2+1$, $g_{k'}=9k'{}^2+1$.
By $(\k_0,\mu_0)$-boundedness we have that 
\[
1 \le g_k, g_{k'} \le 9 \k_0^2+1 < g_m, g_{m'} \le 9 \mu_0^2+1 \, .
\]
Since $n,n'\ge \hat n_0$ we know that $D_1, D'_1$ are the positive curves. We express the canonical divisor $K_Y$ in both bases and compute its square:
\begin{align*}
K_Y^2& =\frac{(18n^2-m_1)^2}{m_1}-\frac{(18m^2+m_2)^2}{m_2}- \frac{(18+m_3)^2}{m_3} \\
&=\frac{(18n'{}^2-m'_1)^2}{m'_1}-\frac{(18m'{}^2+m'_2)^2}{m'_2}- \frac{(18+m'_3)^2}{m'_3} \, .
\end{align*}
Subtract and expand the squares so
\[
18^2 \left(\tfrac{n^4}{m_1}- \tfrac{n'{}^4}{m'_1}\right)=  36(n^2-n'{}^2)+m_1-m'_1+\tfrac{(18m^2+m_2)^2}{m_2}-\tfrac{(18m'{}^2+m'_2)^2}{m'_2}+ \tfrac{(18+m_3)^2}{m_3}- \tfrac{(18+m'_3)^2}{m'_3}\, .
\]
On the other hand, we have the bounds $m_2, m'_2 \le \hat \t_4+18 \mu_0^2+2$, and $m_3, m'_3 \le \hat \t_4+18 \k_0^2+2$ from Lemma \ref{lem:universal-constants-hat}. As $\k_0 < \mu_0$, we take $R=18^2 \lcm(2,3,4,\dots, \hat \t_4+18 \mu_0^2+2)$ and get the result.
\end{proof}

\begin{lemma} \label{lem:m1/m'1-divisors-hat}
Let $\hat n_0$ and $\hat R$ as in Proposition \ref{prop:denominators-self-intersection}. Denote $\cD_{\hat R}=\{d_1, \dots, d_t\}$ the set of divisors of $\hat R$. Suppose that there exists a pair of $(\k_0,\mu_0)$-bounded good bases $\e^{nmk}, \e^{n'm'k'}$ with $n, n' \ge n_0$ two distinct prime numbers. Then the positive self-intersections $m_1=(D_1^{nmk})^2, m'_1=(D_1^{n'm'k'})^2$ satisfy
\[
\frac{m_1}{m'_1} \in \left\{\frac{d_i}{d_j} \frac{n^\b}{n'{}^{\g}}: (d_i,\b), (d_j,\g) \in \left(\cD_{\hat R} \times \{0,1\}\right) \cup \left(\{1,\dots, 17\} \times \{2\}\right)  \right\} .
\]
\end{lemma}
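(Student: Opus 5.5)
The argument is the one used for Lemma \ref{lem:m1/m'1-divisors}, with Proposition \ref{prop:R-hat} supplying the integrality in place of Proposition \ref{prop:denominators-self-intersection}. Write $d=\gcd(m_1,m'_1)$ and $m_1=da$, $m'_1=da'$ with $\gcd(a,a')=1$. The ratio $m_1/m'_1$ is then $a/a'$, so it suffices to describe $a$ and $a'$.

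\textbf{Divisibility.} By Proposition \ref{prop:R-hat}, $\hat R\,(n^4/m_1-n'{}^4/m'_1)\in\ZZ$. Multiplying by $daa'$ gives
\[
a'\hat R n^4-a\hat R n'{}^4\in daa'\ZZ .
\]
Reducing modulo $a$ shows $a\mid a'\hat R n^4$, and coprimality gives $a\mid \hat R n^4$. Symmetrically, $a'\mid \hat R n'{}^4$. Since $n$ and $n'$ are primes, we can write $a=d_in^{\b}$ and $a'=d_jn'{}^{\g}$ with $d_i,d_j\in\cD_{\hat R}$ and $0\le\b,\g\le 4$. Here $d_i$ is the prime-to-$n$ part of $a$; if $n\mid\hat R$, any extra powers of $n$ can be absorbed into $d_i$, so we may as well take $\b$ maximal.

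\textbf{Exponents.} By Lemma \ref{lem:hat-n0}, $D_1$ and $D'_1$ are the positive curves, since $n,n'\ge\hat n_0$. The statement writes $n_0$, but $\hat n_0$ is the intended threshold. Corollary \ref{cor:m1-le-chi1} then gives $dd_in^{\b}=m_1<\chi_1=18n^2$ and $dd_jn'{}^{\g}<18n'{}^2$. Because $n,n'\ge\hat n_0>18$, we get $\b,\g\le 2$. When $\b=2$ the inequality forces $d\,d_i<18$, so $d_i\in\{1,\dots,17\}$; the same holds for $d_j$ when $\g=2$. Therefore $m_1/m'_1=a/a'$ has exactly the stated form.

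\textbf{Main obstacle.} The only delicate point is the bookkeeping with $\b$. If $n$ divides $\hat R$, the decomposition $a=d_in^\b$ is not unique, and one must choose it so that the case $\b=2$ really yields $d_i\le 17$. Taking $\b$ as the full $n$-adic valuation of $a$ handles this. In the intended application $n,n'$ exceed $\hat R$, so the issue does not arise there.
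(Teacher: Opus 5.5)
Your proposal is correct and matches the paper's proof, which simply repeats the argument of Lemma \ref{lem:m1/m'1-divisors}: write $m_1=da$, $m_1'=da'$ with $\gcd(a,a')=1$, deduce $a\mid \hat R n^4$ and $a'\mid \hat R n'{}^4$ from Proposition \ref{prop:R-hat}, then use $m_1<18n^2$ with $n>18$. You are also right that $\hat n_0$ and Proposition \ref{prop:R-hat} are what the statement intends; your worry about non-uniqueness when $n\mid\hat R$ is harmless but unnecessary, since for any decomposition $a=d_in^{\b}$ with $d_i\in\cD_{\hat R}$ the inequality $dd_in^{\b}<18n^2$ already forces $\b\le 2$, and $d_i\le 17$ when $\b=2$.
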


\begin{proof}
The same proof from Lemma \ref{lem:m1/m'1-divisors} applies here.
\end{proof}

\subsection{The non-Sasakian property} 

Let $\eps_0=\tfrac{1}{17}$
and consider the constants $\hat n_0$ from Lemma \ref{lem:hat-n0}, $\hat R$ from Proposition \ref{prop:R-hat}, and $\hat n_1=\hat n_1(\eps_0)$ from Lemma \ref{lem:m1/m'1-eps-hat}. We construct a sequence of primes recursively as follows:
\begin{equation} \label{eqn:cP-hat}
\begin{cases}
    \hat \nu_0=\text{ first prime greater than }  \max \left\{\hat R(1+\eps_0),\hat n_0,\hat n_1, \sqrt{18 \hat \t_6}\, \right \} , \\ 
    \hat \nu_{i+1}=\text{ first prime greater than } {\displaystyle \tfrac{\hat R \, \hat \n_i^2}{1-\eps_0}}\, .
\end{cases}
\end{equation}
We denote $\hat \cP=\{\hat \nu_i: i \ge 0\}$ this sequence of prime numbers. As before, the number $\hat R$ is very large, in particular we have $R  \gg \hat n_0, \hat n_1, \sqrt{18 \hat \t_6}$.

\medskip

Recall the construction of $(\k_0,\mu_0)$-bounded bases of type $\hat \e(k,\hat \cP)$: for each $k \in [1,\k_0]$ one chooses $m_k \in [k_0+1, \mu_0] \cap \ZZ$, $n_k \in \hat \cP$. These choices are made in increasing order, and we denote $\hat \e(k,\hat \cP)=\e^{n_k m_kk}$. Moreover, the most natural choice of $\mu_0$ is $\mu_0=2\k_0$.

\begin{proposition} \label{prop:non-existence-Y-1-hat}
Let $\hat \cP$ be the sequence of primes (\ref{eqn:cP-hat}). There does not exist a Kähler orbifold $Y$ with a pair of $(\k_0,\mu_0)$-bounded bases $\hat \e(k,\hat \cP)=\e^{n_km_kk}$, $\hat \e(k',\hat \cP)= \e^{n_{k'}m_{k'} k'}$.
\end{proposition}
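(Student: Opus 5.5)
The argument will follow the proof of Proposition \ref{prop:non-existence-Y-1} essentially verbatim, with the hatted constants in place of the unhatted ones. I will argue by contradiction: suppose $Y$ has the two $(\k_0,\mu_0)$-bounded bases $\hat\e(k,\hat\cP)=\e^{nmk}$ and $\hat\e(k',\hat\cP)=\e^{n'm'k'}$, where $n=n_k$ and $n'=n_{k'}$ are distinct elements of $\hat\cP$. As in Proposition \ref{prop:non-existence-Y-1}, the statement is applied to good bases, which Lemma \ref{lem:above} supplies, so I will take both bases to be good. Without loss of generality $n>n'$, so $n=\hat\nu_i$ and $n'=\hat\nu_j$ with $i>j$. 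Every element of $\hat\cP$ exceeds $\max\{\hat n_0,\hat n_1,\sqrt{18\hat\t_6}\}$ and $\hat R(1+\eps_0)$. Hence Lemma \ref{lem:hat-n0} applies, and $D_1, D_1'$ are the positive curves. Lemma \ref{lem:m1/m'1-eps-hat} with $\eps_0=\frac1{17}$ then gives
\[
\frac{m_1}{m'_1}\in\left((1-\eps_0)\tfrac{n^2}{n'^2},(1+\eps_0)\tfrac{n^2}{n'^2}\right).
\]
Since $n,n'$ are distinct primes, Lemma \ref{lem:m1/m'1-divisors-hat} applies: it builds on Proposition \ref{prop:R-hat} and Corollary \ref{cor:m1-le-chi1}, using $n,n'>18$. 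It gives $\frac{m_1}{m'_1}=\frac{d_i}{d_j}\frac{n^\b}{n'^\g}$ with the constraints stated there.

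Next I exclude the exponents $\b,\g\in\{0,1\}$. All quotients $s=d_i/d_j$ lie in $[\hat R^{-1},\hat R]$; for $d_i,d_j\le 17$ this holds because $17<\hat R$. The recursion gives $n\ge \hat R n'^2/(1-\eps_0)$, so
\[
\frac{m_1}{m'_1}>(1-\eps_0)\frac{n^2}{n'^2}\ge \hat R n,
\]
and this exceeds every candidate with $\b\le1$. Hence $\b=2$. Similarly $n'>\hat R(1+\eps_0)$ gives
\[
\frac{m_1}{m'_1}<\frac{1}{\hat R}\frac{n^2}{n'},
\]
which rules out $\g\le 1$ once $\b=2$. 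Hence $\g=2$.

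Now $m_1=d_in^2$ and $m'_1=d_jn'^2$ with $1\le d_i,d_j\le 17$. Every ratio $d_i/d_j\neq1$ differs from $1$ by at least $\eps_0$, so the interval condition forces $d_i=d_j=d\le17$. In particular $m_1\le 17n^2$. Lemma \ref{lem:bound-m1-hat}, on the other hand, gives $m_1\ge 18n^2-n\sqrt{18\hat\t_6}>17n^2$, because $n>\sqrt{18\hat\t_6}$. This is the desired contradiction.

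The only point needing care is checking that the hatted lemmas really hold under the hypotheses here. First, Proposition \ref{prop:R-hat} handles $m_3\le\hat\t_4+18\k_0^2+2$ through $\k_0<\mu_0$, and the third term is $(18k^2+m_3)^2/m_3$, not $(18+m_3)^2/m_3$; the $\lcm$ still absorbs the resulting denominators. Second, the $n$ used in Lemma \ref{lem:m1/m'1-divisors-hat} must exceed $\hat n_0$, which is guaranteed by the definition of $\hat\nu_0$. Once these are confirmed, the argument is routine.
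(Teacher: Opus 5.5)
Your proposal is correct and follows the paper's route: the paper proves this proposition by saying it is the same proof as Proposition \ref{prop:non-existence-Y-1}, and you carry out that argument with the hatted constants. You force $\beta=2$ via $n\ge \hat R\, n'^2/(1-\eps_0)$, then $\gamma=2$ via $n'>\hat R(1+\eps_0)$, then $d_i=d_j\le 17$, which contradicts $m_1\ge 18n^2-n\sqrt{18\hat\t_6}>17n^2$. Your side remarks are also accurate: goodness of the two bases is implicitly needed, the third term in Proposition \ref{prop:R-hat} should read $(18k^2+m_3)^2/m_3$ and its denominators are still absorbed by the $\lcm$, and Lemma \ref{lem:m1/m'1-divisors-hat} needs $\hat n_0$ rather than $n_0$.
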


\begin{proof}
Same proof as in Proposition \ref{prop:non-existence-Y-1}.
\end{proof}

\begin{proposition} \label{prop:non-existence-Y-2-hat}
Consider $\t_0=10 \, 290$, $\k_0=2 \t_0+2$, $\mu_0=2\k_0$, and 
\[
N \ge  \hat \nu_{\k_0} = \text{$\k_0$-th term of the sequence of primes $\hat \cP$.}
\]
There does not exist a Kähler orbifold $Y$ with $N^3$ orthogonal bases of type $\e^{nmk}$, with $1 \le n,m, k \le N$.
\end{proposition}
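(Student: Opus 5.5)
We argue by contradiction, in the same way as Proposition \ref{prop:non-existence-Y-2}. Suppose $Y$ is a cyclic Kähler orbifold having all the orthogonal bases $\e^{nmk}$ with $1 \le n,m,k \le N$, where $N \ge \hat\nu_{\k_0}$.

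The first step is to bound the singular points. Since $N \ge 9$, $Y$ contains the three bases with $(n,m,k)=(3,2,1),(6,5,4),(9,8,7)$. Proposition \ref{prop:hat-tau_0} then gives $\#P \le \hat\t_0 = 10\,290$.

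The second step is to place $Y$ in the setting of Subsection \ref{subsec:geometric-bounds-hat}. Take $\k_0 = 2\hat\t_0+2$ and $\mu_0 = 2\k_0$. For $1\le k\le\k_0$ we have $m_k=\k_0+k \le \mu_0$ and $n_k = 2\k_0+k \le 3\k_0 = \hat N_1 \le N$. Hence $Y$ contains all the bases $\hat\e(k)$, and the universal constants $\hat\t_1,\dots,\hat\t_6$, $\hat n_0$, $\hat n_1$, $\hat R$ of Lemmas \ref{lem:universal-constants-hat}--\ref{lem:m1/m'1-divisors-hat} are valid for $Y$.

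The third step chooses the bases to test. For each $k\in[1,\k_0]$ let $m_k=\k_0+k$ and $n_k=\hat\nu_{k-1}\in\hat\cP$. All these indices are at most $N$ because $\hat\nu_{k-1}\le\hat\nu_{\k_0}\le N$. So $Y$ contains the $\k_0$ bases $\hat\e(k,\hat\cP)=\e^{n_km_kk}$. The genera $9k^2+1$, $9m_k^2+1$ and $9n_k^2+1$ are all distinct across these bases, since $k\le\k_0<m_k\le 2\k_0<n_k$. A singular point lies on at most two isotropy curves, and each curve belongs to at most one of these bases. Therefore each singular point meets at most two bases $\hat\e(k,\hat\cP)$; this is the argument of Lemma \ref{lem:above}, which applies to any $\hat\cP$. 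At most $2\hat\t_0$ of the bases are therefore not good. Since $\k_0 = 2\hat\t_0+2$, at least two of them are good $(\k_0,\mu_0)$-bounded bases.

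The final step applies Proposition \ref{prop:non-existence-Y-1-hat}, which forbids two such bases, giving the contradiction.

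The main obstacle is checking that the argument of Proposition \ref{prop:non-existence-Y-1} really carries over to the hatted setting. Two points need attention.

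\begin{itemize}
\item \textbf{Size of the primes.} One must confirm that $n,n'\in\hat\cP$ exceed $\hat n_0$, $\hat n_1$ and $\sqrt{18\hat\t_6}$, and also exceed $18$, which is needed in Lemma \ref{lem:m1/m'1-divisors-hat}. This holds because $\hat\nu_0>\hat R(1+\eps_0)$ and $\hat R$ dwarfs these constants.
\item \textbf{Integrality.} The terms $(18k^2+m_3)^2/m_3$ must have denominators dividing $\hat R$. This holds because $m_3\le\hat\t_4+18\k_0^2+2\le\hat\t_4+18\mu_0^2+2$. Also, the term $(18+m_3)^2/m_3$ written in the proof of Proposition \ref{prop:R-hat} should read $(18k^2+m_3)^2/m_3$, which does not affect the divisibility.
\end{itemize}

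Finally, the off-by-one in indexing ($\hat\nu_{\k_0-1}$ versus $\hat\nu_{\k_0}$) is harmless, since the sequence $\hat\cP$ is increasing.
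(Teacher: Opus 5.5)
Your proposal is correct and follows essentially the same route as the paper's proof. Both arguments bound $\#P$ via Proposition \ref{prop:hat-tau_0}, obtain the universal constants from the bases $\hat\e(k)$, and choose the $n_k$ in $\hat\cP$ so that the counting of Lemma \ref{lem:above} yields two good $(\k_0,\mu_0)$-bounded bases, ruled out by Proposition \ref{prop:non-existence-Y-1-hat}. Your consistent indexing $n_k=\hat\nu_{k-1}$ and the correction $(18k^2+m_3)^2/m_3$ in Proposition \ref{prop:R-hat} are sensible clean-ups of small slips in the paper's write-up.
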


\begin{proof}
If there exists such an orbifold $Y$, then $Y$ satisfies all the properties from this section. Also, among the bases $\hat \e(m,\hat \cP)=\e^{n_km_kk}$, for $1 \le k \le \k_0$, constructed by choosing 
\[
n_2=\hat \nu_0 , \, \ldots, \,n_{\k_0}=\hat \nu_{\k_0}\, ,
\]
there would be at least two $(\k_0,\mu_0)$-bounded good bases. The non-existence of such an $Y$ follows from Proposition \ref{prop:non-existence-Y-1-hat}.
\end{proof}

The following follows immediately after all this work. This completes the proof 
of Theorem \ref{th:main-1}.

\begin{theorem} \label{th:main-sec7}
Consider $\t_0=10 \, 290$, $\k_0=2 \t_0+2$, $\mu_0=2\k_0$.
For a choice of $N_1 \ge \hat \nu_{\k_0}$, 
$N_2\geq \mu_0$, $K_0\geq \kappa_0$, the $K$-contact Smale-Barden manifolds $M$ constructed in Theorem \ref{th:main-0} do not admit any Sasakian structure.
\end{theorem}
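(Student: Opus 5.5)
We argue by contradiction. Suppose $M$ (from Theorem \ref{th:main-0} with $N_1\ge \hat\nu_{\k_0}$, $N_2\ge\mu_0$, $K_0\ge\k_0$) admits a Sasakian structure. By \cite{Rukimbira} it then admits a quasi-regular one, so $M$ is a Seifert bundle $M\to Y$ over a cyclic Kähler orbifold $Y$ as in \eqref{eqn:MtoY}. Reading off the torsion of \eqref{eqn:H2MZ} as in Section 4, Proposition \ref{thm:16MRT} gives $b_2(Y)=3$. It also gives, for every prime $p_{nmk}$, an orthogonal basis $\e^{nmk}=(D_1,D_2,D_3)$ of disjoint complex curves of genera $9n^2+1$, $9m^2+1$, $9k^2+1$. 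These bases exist for every $1\le n\le N_1$, $1\le m\le N_2$, $1\le k\le K_0$. The aim is to show that this family already contains the configuration excluded by Proposition \ref{prop:non-existence-Y-1-hat}.

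First, since $N_1\ge 9$ and $N_2,K_0\ge 9$, the bases $\e^{nmk}$ with $(n,m,k)=(3,2,1),(6,5,4),(9,8,7)$ exist, so Proposition \ref{prop:hat-tau_0} gives $\#P\le\hat\t_0=10\,290=\t_0$. Next we build the bases $\hat\e(k,\hat\cP)=\e^{n_km_kk}$ for $1\le k\le\k_0=2\t_0+2$, taking $m_k=\k_0+k\le 2\k_0=\mu_0\le N_2$ and $n_k=\hat\nu_{k-1}\le\hat\nu_{\k_0}\le N_1$. All these triples are within range because $K_0\ge\k_0$. Every curve in these bases has a distinct genus, and each singular point lies on at most two isotropy curves. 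Hence, by Lemma \ref{lem:above} with $\ell=2$, at least two of these bases are good and $(\k_0,\mu_0)$-bounded.

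We then need the universal constants of Lemma \ref{lem:universal-constants-hat}, namely bounds on $K_Y^2$, $e(Y)$, and the negative self-intersections. Their hypothesis is that $Y$ contains the bases $\hat\e(k)$ with $n_k=2\k_0+k\le 3\k_0=\hat N_1$, which holds because $N_1\ge\hat\nu_{\k_0}\gg\hat N_1$. With these constants in hand, Lemmas \ref{lem:hat-n0}, \ref{lem:bound-m1-hat}, \ref{lem:m1/m'1-eps-hat} and \ref{lem:m1/m'1-divisors-hat} apply to the two good bases. The conditions $n,n'\ge\max(\hat n_0,\hat n_1,\sqrt{18\hat\t_6})$ hold because $\hat\cP$ starts beyond these values, and $n,n'$ are distinct primes in $\hat\cP$. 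So the two good bases give exactly the pair whose existence Proposition \ref{prop:non-existence-Y-1-hat} rules out. That is the contradiction.

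The main obstacle is bookkeeping rather than new ideas. We must check that the constants used in Lemma \ref{lem:universal-constants-hat} and in the definition of $\hat\cP$ are determined only by the bases $\hat\e(k)$ with small $n$, so that they do not depend on $N_1$. Otherwise the argument becomes circular. We must also handle the mild index mismatch in Proposition \ref{prop:non-existence-Y-2-hat} (assigning $n_k=\hat\nu_{k-1}$ or $\hat\nu_k$). Either choice stays $\le\hat\nu_{\k_0}\le N_1$, and this bound is the only thing needed, so the conclusion of Theorem \ref{th:main-sec7} follows.
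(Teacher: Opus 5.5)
Your proposal is correct and follows the paper's route. The paper deduces Theorem \ref{th:main-sec7} directly from Proposition \ref{prop:non-existence-Y-2-hat}: bound $\#P$ via Proposition \ref{prop:hat-tau_0}, extract two good $(\k_0,\mu_0)$-bounded bases among the $\hat\e(k,\hat\cP)$ via Lemma \ref{lem:above}, and apply Proposition \ref{prop:non-existence-Y-1-hat}. Your checks that only $N_1\ge\hat\nu_{\k_0}$, $N_2\ge\mu_0$, $K_0\ge\k_0$ are needed (not $N^3$ bases), that the universal constants depend only on $\hat N_1=3\k_0$, and that the index mismatch is harmless, are exactly the bookkeeping the paper leaves implicit.
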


\subsection{Estimation of the constants}

\begin{itemize}
    \item We have $\hat \t_0=10\,290$, the bound for the singular points of $Y$.
    \item We can take $\k_0=2 \hat \t_0+2=20 \, 582$ to ensure the existence of two $(\k_0,\m_0)$-bounded bases with $\m_0=2\k_0$.
    \item The number $\hat N_1=3 \k_0=61\,746$ is the lower bound for the value of $N$ in order to have two $(\k_0,2\k_0)$-bounded bases among the bases $\e^{nmk}$, $1 \le n,m,k \le N$.
\end{itemize}
Once given the number $\hat N_1$, we have the universal geometric constants orbifolds $Y$ with bases $\e^{nmk}$, $1 \le n,m,k \le N$, and with $N \ge \hat N_1$, given as follows:
\begin{itemize}
    \item $\hat \t_1=(18 \hat N_1^2-1)^2$ and $\hat \t_2=36 \hat N_1^2-1$ so that $-\hat \t_2 \le K_Y^2 \le \hat \t_1$.

    \item $\hat \t_4=144 \hat N_1^2+30$ so that $-C^2 \le 2g(C)+\hat \t_4$ for any nice curve $C \subset Y$.

    \item The number 
    \[
    \hat n_0=\left \lfloor \sqrt{\tfrac{36 \hat N_1^2-1+(18\mu_0^2-1)^2-72}{72}} \right \rfloor +1 ,
    \]
    to ensure $D_1^{nmk}$ is the positive curve for $n \ge \hat n_0$.
\end{itemize}
We also have:
\begin{itemize}
    \item The number $\hat \t_6=(18N_1^2-1)^2+(18 \mu_0^2+1)^2+(18 \k_0^2+1)^2$,
    from Lemma \ref{lem:bound-m1-hat}.
    \item The number $\hat n_1=18 \sqrt{\frac{\hat \t_6}{18}}=\sqrt{18 \hat \t_6}=320\,255\,755\,343$, from Lemma \ref{lem:m1/m'1-eps-hat}, with $\eps=\frac{1}{17}$.
    \item The number $\hat R=18^2 \lcm(2,3,4,\dots, \hat \t_4+18\mu_0^2+2)$, from Proposition \ref{prop:R-hat}.
\end{itemize}

If we take $\mu_0=65\,499$, $N_1=2 \mu_0-1=130\,997$, we get:
\begin{align*}
& \hat \t_1= 4\,709\,559\,895\,161\,746\, 824\,369 \, ,  && \hat \t_2=137\,252\,466\,575 \, , \\  & \hat \t_4=549\,009\,866\,334 \, ,
 && \hat n_0=3\,594\,524\,069  \, , \\ & \hat \t_6=
5\,697\,986\,046\,103\,220\,268\,099 \, , && \hat n_1=320\,255\,755\,343 \,.
\end{align*}

The number $\hat R$ is
\begin{align*}
& \hat R= 18^2 \lcm(2,3,4,\dots, 579\,510\,414\,464)= 18^2 \prod_{p \text{ prime}} p^{e_p} \, , \\
& \text{where $2 \le p \le 579\,510\,414\,464$, and $e_p=\lfloor \tfrac{\log(579\,510\,414\,464)}{\log p} \rfloor \leq \lfloor \tfrac{12}{\log p} \rfloor$} \, .
\end{align*}
The number $\hat R$ is by far the bigger number among those computed here. We can estimate easily that
\[
\hat R > 18^2 \cdot 210 \cdot 11^{4\, 118\,054\,809}
\]
which is again a huge number.
Note however that all these constants are much smaller than the corresponding numbers that we computed in Section \ref{sec:6} where no use was made from multiples of the third curve.

\subsection{Estimation for $\hat N$.}

Consider the sequence of primes $\hat \cP$ given by:
\[
\begin{cases}
    \hat \nu_0=\text{ first prime greater than } \frac{18}{17} \hat R \, , \\[5pt] 
    \hat \nu_{i+1}=\text{ first prime greater than } \frac{17}{16} \hat R \, \hat \n_i^2\, .
\end{cases}
\]
Let us give a lower and upper bound for $\hat N= \hat \nu_{\k_0}=\hat \nu_{20 \, 582}$.

\noindent {\bf Lower bound.}
As
$\hat \nu_{i+1} \geq \frac{17}{16} \hat R \, \hat \n_i^2$, we have by induction that 
\[
\hat \nu_k \ge \tfrac{16}{17} \left(\tfrac{18}{16}\right)^{2^k} \hat R^{2^{k+1}-1} \, .
\]
Hence 
\[
\hat N \ge \nu_{20\,582} \ge \tfrac{16}{17} \left(\tfrac{18}{16}\right)^{2^{20\,582}} \hat R^{2^{20\,583}-1} \, .
\]

\noindent{\bf Upper bound.}
Denote $\hat r>1$ a number so that for any $n \ge \hat R$ there exists a prime with $n<p<\hat r n$.
According to the result by Dusart \cite[Proposition 5.4]{Dusart}, 
 we can take $r=1+\log^{-3} (\hat R)$. Looking at the estimates above it is easy to see that $\log^{3} (\hat R)>10^{28}$, so we can take $\hat r=1+10^{-28}$.
Using this result, we get that 
$\hat \nu_0$ satisfies $\hat \nu_0 < \frac{18}{17} \hat r \hat R$,
and that 
$\hat \nu_{i+1} \leq \frac{17}{16} \hat r \hat R \n_i^2$. By induction we get
\[
\hat \nu_k \leq \tfrac{16}{17} \left(\tfrac{18}{16}\right)^{2^k} (\hat r \hat R)^{2^{k+1}-1} \, .
\]

\medskip

Summing up, to get a contradiction with the existence of a Sasakian structure, we need our orbifold $Y$ to have at least $\k_0=2\hat \t_0+2=20\,582$ bases of type $\hat \e(k,\hat \cP)$, hence we need a value of $\hat N$ greater or equal to $\nu_{20\,582}$. We obtain the following, result, which completes the proof of Theorem \ref{th:main-3}.

\begin{proposition}
Let $\hat N$ an integer such that
\[
\hat N \ge 
\tfrac{16}{17} \left(\tfrac{18}{16}\right)^{2^{20\,582}} (\hat r \hat R)^{2^{20\,583}-1} \, ,
\]
The K-contact Smale-Barden manifolds $M$ from Theorem \ref{th:main-0} with $N_1\ge \hat N$,
$N_2 \ge 41\,164$ and $K_0 \ge 20 \, 582$, cannot admit a Sasakian structure.
\end{proposition}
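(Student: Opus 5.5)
The plan is to deduce the statement from Theorem \ref{th:main-sec7}, using the explicit upper bound for $\hat\nu_{\k_0}$ obtained just above. Suppose $M$ is as in Theorem \ref{th:main-0} with $N_1\ge \hat N$, $N_2\ge 41\,164=\mu_0$ and $K_0\ge 20\,582=\k_0$, where $\k_0=2\hat\t_0+2$, $\mu_0=2\k_0$ and $\hat\t_0=10\,290$ comes from Proposition \ref{prop:hat-tau_0}. If $M$ carried a Sasakian structure, then by \cite{Rukimbira} it would carry a quasi-regular one. So $M\to Y$ would be a Seifert bundle over a cyclic Kähler orbifold $Y$ with $b_2(Y)=3$. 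Reading off the torsion of $H_2(M,\ZZ)$ as in Section 4, $Y$ has an orthogonal basis $\e^{nmk}$ for every triple with $n\le N_1$, $m\le N_2$, $k\le K_0$.

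The first step is to check that the bases actually used in the argument are present. These are the three bases of Proposition \ref{prop:hat-tau_0}, which bound $\#P\le\hat\t_0$, and the bases $\hat\e(k,\hat\cP)=\e^{n_km_kk}$ for $1\le k\le\k_0$, with $m_k=\k_0+k\le\mu_0$ and $n_k=\hat\nu_{k-1}$ or $\hat\nu_k$ in the increasing sequence \eqref{eqn:cP-hat}. The required indices satisfy $k\le\k_0\le K_0$, $m_k\le\mu_0\le N_2$, and $n_k\le\hat\nu_{\k_0}$. For the last index we need $\hat\nu_{\k_0}\le \hat N\le N_1$.

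This inequality comes from the induction proved above. Using Dusart's result with $\hat r=1+10^{-28}$ (valid since $\log^3\hat R>10^{28}$), one has $\hat\nu_k\le \tfrac{16}{17}(\tfrac{18}{16})^{2^k}(\hat r\hat R)^{2^{k+1}-1}$. At $k=\k_0=20\,582$ this is exactly the bound imposed on $\hat N$ in the statement. I would double-check two things here. First, the base case: $\hat\nu_0$ is the first prime above $\max\{\tfrac{18}{17}\hat R,\hat n_0,\hat n_1,\sqrt{18\hat\t_6}\}$, and this maximum equals $\tfrac{18}{17}\hat R$ because $\hat R$ dwarfs the other constants. Second, the inductive step, where the recursion $\hat\nu_{i+1}<\hat r\,\tfrac{17}{16}\hat R\hat\nu_i^2$ gives exponents $2^{k+1}-1$ on $\hat r\hat R$ and $2^k$ on $\tfrac{18}{16}$.

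With all these bases present, I would argue as follows.
\begin{itemize}
\item Lemma \ref{lem:above} gives two good $(\k_0,\mu_0)$-bounded bases among the $\hat\e(k,\hat\cP)$.
\item Their $n$-values are distinct primes in $\hat\cP$, all at least $\max\{\hat n_0,\hat n_1,\sqrt{18\hat\t_6}\}$.
\item Lemmas \ref{lem:m1/m'1-eps-hat} and \ref{lem:m1/m'1-divisors-hat} then apply. Running the two steps of Proposition \ref{prop:non-existence-Y-1} forces $m_1=dn^2$ with $d\le17$.
\item This contradicts Lemma \ref{lem:bound-m1-hat}, since $n>\sqrt{18\hat\t_6}$ gives $17n^2<18n^2-n\sqrt{18\hat\t_6}$.
\end{itemize}

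The main obstacle is making sure the universal constants $\hat\t_1,\dots,\hat\t_6$ and $\hat R$ are valid under the hypotheses actually available. Those hypotheses give bases $\e^{nmk}$ only in a box that is unbounded in $n$ but bounded in $m$ and $k$, not $N^3$ bases in a cube. I would re-examine Lemma \ref{lem:universal-constants-hat}. Its proof only uses the bases $\hat\e(k)$ with $n\le\hat N_1=61\,746$, $m\le\mu_0$ and $k\le\k_0$, and these exist because $N_1\ge\hat N\gg\hat N_1$, $N_2\ge\mu_0$ and $K_0\ge\k_0$. So the constants, and hence the whole argument, go through.
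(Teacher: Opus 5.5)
Your proposal is correct and follows essentially the same route as the paper. You reduce to Theorem \ref{th:main-sec7}: two good $(\kappa_0,\mu_0)$-bounded bases with $n$-values in the disperse primes $\hat\cP$ force $m_1=dn^2$ with $d\le 17$, which is the contradiction. You then use the Dusart-based induction $\hat\nu_k\le \tfrac{16}{17}(\tfrac{18}{16})^{2^k}(\hat r\hat R)^{2^{k+1}-1}$ at $k=\kappa_0=20\,582$ to check that the stated threshold dominates $\hat\nu_{\kappa_0}$. Two further points you raise are consistent with how Theorem \ref{th:main-sec7} is actually stated and used: only the box of bases with $n\le N_1$, $m\le\mu_0$, $k\le\kappa_0$ is needed rather than a full cube, and you handle the off-by-one indexing of the $n_k$ correctly.
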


\subsection{Gaps in the torsion of $H_2(M,\ZZ)$.}

It is worthwhile to point out that in fact we can give many more examples of Smale-Barden manifolds which admit K-contact but not Sasakian structures. It suffices to notice that the non-existence of a Sasakian structure has been deduced solely from the existence of (at least) two suitable bases $\e=\e^{nmk}$, $\e=\e^{n'm'k'}$ for some values of $k,m,n$ satisfying some specific requirements. More precisely, in the proof of Theorem \ref{th:main-2} we need:
    \begin{itemize}
    \item the bases $\e^{nm1}$ for $(n,m) \in \{(i,i+1): 2 \le i \le 10\}$, to bound the number of singularities of $Y$.
    \medskip
    \item the bases $\e^{nm1}$ for the values $(n,m)$ with $1 \le m \le \mu_0=65 \, 500$, $\mu_0+1 \le n \le 2 \mu_0-1=N_1$, in order to ensure the existence of the universal geometric constants from Section \ref{sec:5}.
    \medskip
    \item the existence of two $\mu_0$-bounded good bases of type $\e^{\nu_i m 1}$, $\e^{\nu_j m' 1}$ with $1\le m, m' \le \mu_0$, and $\nu_i, \nu_j$ in the collection of disperse primes $\cP$. 
    \end{itemize}
    
 In the proof of Theorem \ref{th:main-3} we need:
 \begin{itemize}
 \item the bases $\e^{nmk}$ for the values $(n,m,k)=(3,2,1),
 (6,5,4),(9,8,7)$, to bound the number of singular points of $Y$.
 \medskip
 \item the bases $\e^{nmk}$ for the values $(n,m,k)=(2\k_0+k, \k_0+k, k)$, with $1\leq k\leq \k_0=20 \, 582$, to get the universal geometric bounds. 
 \medskip
     \item the existence of two $(\k_0,2\k_0)$-bounded bases of type $\e^{\hat \nu_i m k}$, $\e^{\hat \nu_j m' k'}$ with $1 \le k, k' \le \k_0$, $1\le m=\k_0+k, m'=\k_0+k' \le 2\k_0$, and $\hat \nu_i, \hat \nu_{j}$ in the collection of primes $\hat \cP$.
 \end{itemize}  
Finally, about the primes $p_{nmk}$. We only 
need that $p_{nmk}$ is $\geq 5$ and coprime with
$n,m,k$. 

In order to ensure these conditions, in Theorem \ref{th:main-2} (respectively in Theorem \ref{th:main-3}) we assumed that the orbifold $Y$ contained all the bases $\e^{nm1}$ (resp.\ the bases $\e^{nmk}$) for all the values of $n, m$ (resp.\ $n,m,k$) up to certain bounds. But we actually need much less, since it is enough that $Y$ has bases of type $\e^{nmk}$ where $n$ belongs to a very specific set of values, i.e.\ a sufficiently disperse sequence of primes $\cP$ and $\hat \cP$.
The sets $\cP$ and $\hat \cP$ were defined by using sequences $(x_k)_k$ of primes that satisfy a bound of type
$x_k \ge c \rho^{2^k}$
for some constants $c>0$, $\rho \gg 1$. For the sequence $\cP$ we have $c=\frac{16}{17} \frac{1}{R}$ and $\rho=\frac{18}{17}R^2$, while for $\hat\cP$ we have analogous constants changing $R$ by $\hat R$.
Lastly, recall that the sequences of primes $\cP$ and $ \hat \cP$ are not uniquely determined, since the only requirements is that the primes in the sequence are \emph{sufficiently apart} from each other. So in fact, there are many choices possible.

After this discussion, let us introduce some notation to properly enounce a sharper version of Theorems \ref{th:main-2} and \ref{th:main-3}.
Consider the set
 $$
 \mathcal{E}=\{(i,i+1):2\le i \le 10\} \cup \{(\mu_0+m,m): 1\leq m\leq \mu_0\} \cup \{(\nu_m,m): 1\leq m\leq \mu_0\}.
 $$
For each $e=(n_e,m_e) \in  \mathcal{E}$ take a prime $p_e$ subject to the condition that they are different,
$p_e\geq 5$, and $p_e$ coprime with $n_e,m_e$.

\begin{theorem} \label{thm:16}
Let $\cP=\{\nu_m\}_m$ be a sequence of primes constructed as in (\ref{eqn:cP}), $\mu_0 \ge 65 \, 500$, and let $\mathcal{E}$ the set defined above. Any Smale-Barden manifold $M$ such that
\[
H_2(M,\ZZ) = \ZZ^2 \oplus  \bigoplus_{e=(n_e,m_e) \in \mathcal{E}} \ZZ^{18n_e^2+2}_{p_{e}} \oplus \ZZ^{18m_e^2+2}_{p^2_{e}} \oplus \ZZ^{20}_{p^3_{e}} 
\]
cannot admit a Sasakian structure.
\end{theorem}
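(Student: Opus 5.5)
The plan is to argue by contradiction, reproducing the chain of Sections \ref{sec:5} and \ref{sec:6} but using only the triples indexed by $\mathcal{E}$. Suppose $M$ admits a Sasakian structure. By \cite{Rukimbira} it admits a quasi-regular one, so $M\to Y$ is a Seifert bundle over a cyclic Kähler orbifold. Proposition \ref{thm:16MRT} then gives $b_2(Y)=3$ and $b_1(Y)=0$. Next I read off the torsion. For each $e\in\mathcal{E}$, the prime $p_e$ occurs with exponents $1,2,3$ in blocks of ranks $18n_e^2+2$, $18m_e^2+2$, $20$. Since the $p_e$ are distinct, this produces a triple $\e^{e}=(D_1,D_2,D_3)$ of isotropy curves. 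Their coefficients satisfy $p_e\parallel m_1$, $p_e^2\parallel m_2$, $p_e^3\parallel m_3$, and their genera are $9n_e^2+1$, $9m_e^2+1$, $10$. The gcd condition forces them to be disjoint, and surjectivity in Proposition \ref{thm:16MRT}(2) forces them to span $H_2(Y,\QQ)$. So each $\e^e$ is an orthogonal basis, and the hypotheses $p_e\ge5$, $\gcd(p_e,n_em_e)=1$ are exactly what this identification needs.

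First I bound the singular set. The pairs $(i,i+1)$, $2\le i\le10$, give the five bases used in Proposition \ref{prop:five-bases}, with all ten genera distinct and greater than $10$. Hence Proposition \ref{prop:bound-singular} yields $\#P\le\t_0=16\,374$. Second, the pairs $(\mu_0+m,m)$, $1\le m\le\mu_0$, form a family of $\mu_0$-bounded bases with $n_m=\mu_0+m$, which is injective in $m$. Lemma \ref{lem:three-bases} and Lemma \ref{lem:existence-many-mu0-bounded} then provide a good $\mu_0$-bounded basis, since $\mu_0\ge 4\t_0+3$. With that basis, the proofs of Lemma \ref{lem:universal-constants}, Lemma \ref{lem:n0} and Lemma \ref{lem:bound-m_1} go through, because they only use a good basis with bounded genera together with the largest genus present. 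The constants $\t_1,\t_2,\t_4,n_0,\t_6,R$ get recomputed with $N_1$ replaced by $2\mu_0$, which is the largest $n$ occurring in this group.

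Third, the pairs $(\nu_m,m)$ give $\mu_0$-bounded bases $\e(m,\cP)$ with $n_m=\nu_m\in\cP$. Applying Lemma \ref{lem:existence-many-mu0-bounded} again, at least two of them are good, since $\mu_0\ge4\t_0+3$. Lemmas \ref{lem:m1/m'1-eps} and \ref{lem:m1/m'1-divisors} apply to this pair, because the $\nu_m$ are distinct primes exceeding $n_0,n_1,\sqrt{18\t_6}$ and $R(1+\eps_0)$. Proposition \ref{prop:non-existence-Y-1} then gives the contradiction: the ratio $m_1/m_1'$ would have to equal $n^2/n'^2$, forcing $m_1\le17n^2$, which is incompatible with Lemma \ref{lem:bound-m_1}.

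The main obstacle is the second step. In Section \ref{sec:5}, the universal constants were derived assuming \emph{all} bases $\e^{nm}$ with $1\le n,m\le N_1$. I must check that the argument of \cite[Section 6]{Mu-jems} behind Lemma \ref{lem:universal-constants} really uses only one good $\mu_0$-bounded basis plus an upper bound on the genera of the curves involved, so that the reduced family $\{(\mu_0+m,m)\}$ suffices. If it turns out to need more bases, I would first try to add to $\mathcal{E}$ enough auxiliary pairs with $n\le 2\mu_0$ to reproduce that argument. A secondary point is ensuring that the disperse primes $\nu_m$ exceed all recomputed constants. Since $R$ dominates all the other constants, as noted after (\ref{eqn:cP}), this should be automatic from the construction of $\cP$.
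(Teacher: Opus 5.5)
Your proposal is correct and is essentially the paper's own argument. The paper justifies Theorem \ref{thm:16} exactly by rerunning Sections \ref{sec:5}--\ref{sec:6} with the three subfamilies of $\mathcal{E}$: the pairs $(i,i+1)$ give $\#P\le\t_0$, the pairs $(\mu_0+m,m)$ give a good $\mu_0$-bounded basis feeding Lemma \ref{lem:universal-constants}, and the pairs $(\nu_m,m)$ give two good bases for Proposition \ref{prop:non-existence-Y-1}. The step you flag as the main obstacle (that those constants need only one good basis of bounded genus) is the same implicit claim the paper makes, so no enlargement of $\mathcal{E}$ is intended. One small correction: reading off the triples from the torsion only uses that the $p_e$ are distinct primes. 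The conditions $p_e\ge 5$ and $\gcd(p_e,n_em_e)=1$ are needed for the K-contact construction (the surjectivity in Theorem \ref{thm:K-contact}), not for the non-Sasakian conclusion.
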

The manifold $M$ in Theorem \ref{thm:16} 
has torsion with as few as $3(2\mu_0+9)=393\, 027$ summands, which
coincides with the number of isotropy curves in the
orbifold $X'$.

Now, let us consider the set
 $$
 \cA=\{(3,2,1),
 (6,5,4),(9,8,7)\} \cup \{(2\k_0+k,\k_0+k,k): 1\leq k\leq \k_0\} \cup \{(\nu_k,\k_0+k,k): 1\leq k\leq \k_0\}.
 $$
For each $a=(n_a,m_a,k_a)\in \cA$, take a prime $p_{a}$
subject to the condition that they are different,
$p_{a} \geq 5$, and $p_{a}$ coprime with $n_a,m_a,k_a$.

\begin{theorem} \label{thm:17}
Let $\hat \cP=\{\hat \nu_k\}_k$ be a sequence of primes constructed as in (\ref{eqn:cP-hat}), 
$\k_0 = 20 \, 582$, and let $\cA$ the set defined above.
Any Smale-Barden manifold such that
\[
H_2(M,\ZZ) =
\ZZ^2 \oplus  \bigoplus_{a=(n_a,m_a,k_a)\in \cA}
\ZZ_{p_{a}}^{18n_a^2+2} \oplus \ZZ_{p^2_a}^{18m_a^2+2} \oplus \ZZ_{p^3_{a}}^{18k_a^2+2} 
\]
cannot admit a Sasakian structure.
\end{theorem}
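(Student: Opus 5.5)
We argue by contradiction, following the scheme of Section 7 but using only the triples indexed by $\cA$. Suppose $M$ admits a Sasakian structure. By \cite{Rukimbira} it then admits a quasi-regular one, so $M\to Y$ is a Seifert bundle over a cyclic Kähler orbifold $Y$. Proposition \ref{thm:16MRT} applied to this bundle gives $b_2(Y)=3$. We then read the torsion of $H_2(M,\ZZ)$ prime by prime. For each $a\in\cA$ the prime $p_a$ appears only in the three summands indexed by $a$, with exponents $1,2,3$. Hence there are isotropy curves $D_1,D_2,D_3$ with $p_a\parallel m_{D_1}$, $p_a^2\parallel m_{D_2}$, $p_a^3\parallel m_{D_3}$ and genera $9n_a^2+1$, $9m_a^2+1$, $9k_a^2+1$. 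Since $\gcd$ of their multiplicities is divisible by $p_a$, these curves are pairwise disjoint, and by condition (2) of Proposition \ref{thm:16MRT} they form an orthogonal basis $\e^{a}$ of $H_2(Y,\QQ)$. So $Y$ carries an orthogonal basis $\e^{n_am_ak_a}$ for every $a\in\cA$. The hypotheses $p_a\ge5$ and $\gcd(p_a,n_am_ak_a)=1$ are what make $M$ exist in the $K$-contact setting; they play no role here.

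Next we check that each earlier step of Section 7 used only bases with indices in $\cA$. The bound $\#P\le\hat\t_0=10\,290$ (Proposition \ref{prop:hat-tau_0}) uses exactly the three bases $(3,2,1),(6,5,4),(9,8,7)$, whose nine genera are distinct. The universal constants $\hat\t_1,\dots,\hat\t_6$, $\hat n_0$, $\hat n_1$, $\hat R$ are produced as in Lemmas \ref{lem:universal-constants-hat}--\ref{lem:m1/m'1-divisors-hat}. For these we need at least one good $(\k_0,2\k_0)$-bounded basis among the bases $\hat\e(k)=\e^{2\k_0+k,\k_0+k,k}$, $1\le k\le\k_0$. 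This follows from Lemma \ref{lem:above}: all the genera involved are distinct, each singular point lies on at most two isotropy curves, and so each singular point spoils at most two of the bases. With $\k_0=2\hat\t_0+2$ at least two of these bases are good.

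The argument of \cite{Mu-jems} bounds $K_Y^2$, $e(\tilde Y)$ and the negative self-intersections using only a good basis of bounded genera, together with the universal bound on $\#P$. It therefore goes through verbatim, with $\hat N_1=3\k_0$ as the maximal index. I expect this to be the main obstacle. One has to reread \cite[Section 6]{Mu-jems} and check that no hidden use was made of bases $\e^{nmk}$ outside $\cA$, for instance of all $n\le N$. The natural first thing to check is that each constant there is expressed through the genera of one good basis plus $\#P$.

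Finally, consider the bases $\hat\e(k,\hat\cP)=\e^{\hat\nu_k,\k_0+k,k}$, $1\le k\le\k_0$. Their genera are again pairwise distinct, so by Lemma \ref{lem:above} at least two of them are good and $(\k_0,2\k_0)$-bounded, say with first indices $n=\hat\nu_i>n'=\hat\nu_j$. These are distinct primes exceeding $\hat n_0,\hat n_1,\sqrt{18\hat\t_6}$, so Lemmas \ref{lem:m1/m'1-eps-hat} and \ref{lem:m1/m'1-divisors-hat} apply. The argument of Proposition \ref{prop:non-existence-Y-1} then runs unchanged. The dispersion $\hat\nu_{i}\ge \hat R\hat\nu_{j}^2/(1-\eps_0)$ forces $\b=\g=2$. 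The choice $\eps_0=\frac1{17}$ then forces $m_1=dn^2$ and $m_1'=dn'^2$ with $d\le17$. This contradicts $18n^2-n\sqrt{18\hat\t_6}\le m_1$ from Lemma \ref{lem:bound-m1-hat}. Hence no such $Y$ exists, and $M$ is not Sasakian.
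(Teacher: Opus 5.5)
Your proposal is correct and is essentially the paper's own justification of Theorem \ref{thm:17}, which is the same bookkeeping given in the subsection on gaps in the torsion. The bases $(3,2,1),(6,5,4),(9,8,7)$ give $\#P\le\hat\t_0$. The bases $(2\k_0+k,\k_0+k,k)$ supply a good $(\k_0,2\k_0)$-bounded basis from which the universal constants are obtained as in \cite{Mu-jems}. Two good bases among $(\hat\nu_k,\k_0+k,k)$ then give the contradiction of Proposition \ref{prop:non-existence-Y-1-hat}. The step you flag as the main obstacle is likewise only asserted in the paper (``follows the same lines as in \cite{Mu-jems}''), and in your last step note that $\g=2$ comes from $n'>\hat R(1+\eps_0)$ rather than from the dispersion of the primes.
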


The manifold $M$ in Theorem \ref{thm:17} 
has torsion with as few as $3(2\k_0+3)=123\, 501$ summands. Again this
coincides with the number of isotropy curves in the
orbifold $X'$.

Clearly, the K-contact manifolds from Theorem \ref{th:main-0} can be constructed so as to satisfy the hypothesis of any of Theorems \ref{thm:16} and \ref{thm:17}, because we have complete freedom to choose which of the curves among $T_n, T'_m, A_k$ we consider inside the isotropy locus of $X'$.

As a final remark, note that the hypothesis in Theorems \ref{thm:16} and \ref{thm:17} above allow the existence of huge gaps in the primes $p$ such that the $p$-torsion in $H_2(M,\ZZ)$ is non-zero.

\end{document}